\documentclass[11pt]{elsarticle}
\usepackage[a4paper,margin=3cm]{geometry}
\usepackage{times}
\usepackage{amsthm}
\usepackage{amsmath}
\usepackage{amssymb}
\usepackage{mathtools}
\usepackage{relsize}%
\usepackage[utf8]{inputenc}
\usepackage{enumerate}
\usepackage[pdfkeywords={science, physics, math},pagebackref]{hyperref}
\usepackage{float}
\usepackage{array}
\usepackage{tikz}
\usetikzlibrary{calc}
\usetikzlibrary{decorations.pathmorphing,patterns}
\usetikzlibrary{matrix}
\usepackage{cancel}

\definecolor{darkgreen}{rgb}{0.0,0.35,0.0}

\usepackage{todonotes}

\makeatletter
\def\ps@pprintTitle{%
  \let\@oddhead\@empty
  \let\@evenhead\@empty
  \def\@oddfoot{\reset@font\hfil\thepage\hfil}
  \let\@evenfoot\@oddfoot
}
\makeatother

\usepackage[framemethod=tikz]{mdframed}
\theoremstyle{plain}
\newtheorem{theorem}{Theorem}[section]
\newtheorem*{theorem*}{Theorem}
\newtheorem{lemma}[theorem]{Lemma}

\newtheorem*{corollary*}{Corollary}
\newtheorem{claim}[theorem]{Claim}
\newtheorem{proposition}[theorem]{Proposition}

\theoremstyle{definition}

\newtheorem{remark}[theorem]{Remark\textbf{}}
\newtheorem*{remark*}{Remark}
\newtheorem{definition}[theorem]{Definition}

\newtheorem{example}[theorem]{Example}

\def\G{\mathcal{G}}

\def\mD{\mathbb{D}}

\def\mC{\mathbb{C}}
\def\C{\mathcal{C}}

\def\F{\mathcal{F}}

\def\aE{\text{\AE}}
\def\bd{\mathbf{bd}}
\def\deg{\mathrm{deg}}

\def\fB{\mathfrak{B}}
\def\fD{\mathfrak{D}}

\def\lbb{\operatorname{LEG}}
\def\lbd{\operatorname{LEG}}

\def\FG{fully graphic}

\def\Nec{\operatorname{\overline{\Gamma}}}

\newcommand\arXiv[1]{{\sl arXiv:\/}{\bf #1}}
\newcommand\DOI[1]{{DOI:#1}}
\newcommand\junk[1]{}

\newcommand{\pppp}[1]{\partial^{++}#1}

\newcommand{\appropto}{\mathrel{\vcenter{
  \offinterlineskip\halign{\hfil$##$\cr
    \propto\cr\noalign{\kern2pt}\sim\cr\noalign{\kern-2pt}}}}}

\newcommand{\aA}{\mathbf{A}}

\newcommand{\dD}{\mathbf{D}}

\newcommand{\oG}{\overrightarrow{G}}

\begin{document}
\begin{frontmatter}
\title{P-stable degree sequence regions for bipartite and directed graphs}
\author[renyi]{Péter L. Erdős}
\author[NEU]{Gábor Lippner\fnref{lg}}
\author[NEU]{Na'ama Nevo}
\author[renyi]{Lajos Soukup}
\address[renyi]{HUN-REN Alfréd Rényi Institute of Mathematics, Reáltanoda u 13--15 Budapest, 1053 Hungary\\
\texttt{$<$erdos.peter,soukup.lajos$>$@renyi.hun-ren.hu}}
\address[NEU]{Northeastern University, 360 Huntington Ave, Boston, MA, 02115\\ $<$g.lippner,nevo.n$>$@northeastern.edu}
\fntext[lg]{GL was supported in part by Simons Foundation Collaboration Grant No 953420}

\begin{abstract}
The switch Markov chain is a natural method for sampling graph realizations with a prescribed degree sequence, and $P$-stability is a general sufficient condition for its rapid mixing.  Since $P$-stability is often difficult to verify directly, we develop an approach based on the simpler and more readily verifiable property of \emph{full graphicality}.

We consider bidegree sequence regions obtained by fixing the two class sizes, the common degree sum, and lower and upper bounds for the degrees in each class.  Such a region is fully graphic if every bidegree sequence satisfying these constraints has a bipartite realization.  Our main result shows that every fully graphic region of this form is $P$-stable.  More precisely, for class sizes $n$ and $m$, the perturbation ratio is at most $(nm)^5$.  It follows from a known theory that the switch Markov chain is rapidly mixing on every such region.  We also show that full graphicality can be tested on a canonical extremal bidegree sequence and use this characterization to derive several
new explicit sufficient conditions.

By encoding loopless directed graphs as bipartite graphs, we develop a parallel theory for directed degree sequences.  This gives analogous extremal characterizations, sufficient conditions for full graphicality, and $P$-stability and rapid-mixing results for directed degree-sequence regions. The encoding process allows most corresponding statements for the two cases to be formulated and proved in one run.
\end{abstract}

\begin{keyword}
bipartite and directed graphs, $\F$-free graphic degree sequence, \FG\ degree sequences, degree sequence regions,  $P$-stable degree sequences, switch Markov chain, anonymizing Social Networks
\end{keyword}

\end{frontmatter}

\section{Introduction}\label{sec:intro}
Generating a graph uniformly at random from all labeled realizations of
a prescribed degree sequence is a basic problem in graph sampling.  A
natural approach is the switch Markov chain, whose transitions replace
two edges by two non-edges while preserving every vertex degree
\cite{RJB}.  Kannan, Tetali, and Vempala conjectured that this chain is
rapidly mixing for every graphic degree sequence \cite{KTV97}. While this manuscript has been prepared, a new preprint of Fu, Qin, and Wang  \cite{Fu} establishes polynomial mixing for arbitrary bipartite degree sequences.

One general route to rapid mixing is provided by $P$-stability, a
notion introduced by Jerrum and Sinclair \cite{JS90}.  Informally,
$P$-stability requires the number of realizations of suitably perturbed
degree sequences to be at most a polynomial factor larger than the
number of realizations of the original sequence.  The unified approach
of \cite{P-stable} shows that $P$-stability implies rapid mixing of the
switch Markov chain and encompasses the previously known rapidly
mixing families.  The difficulty is that $P$-stability is a counting
condition and is usually hard to verify directly.

A different approach to prove such stability property was developed in \cite{fully},
through the notion of a \emph{fully graphic} degree-sequence region.  Such
a region is specified by elementary constraints on the degrees and is fully graphic when every sequence satisfying those constraints has a realization.  Thus
full graphicality is an existence property rather than a counting property.
For simple graphs, \cite{fully} used this property to obtain new $P$-stable
families, and \cite{Simple-fully} developed a general connection between full graphicality and $P$-stability.  The purpose of the present paper is to establish the corresponding theory for bipartite and directed graphs.

\bigskip\noindent Before we turn our attention to this different approach we describe in short the problem of anonymizing in social networks, and one suggested approach to it. \textsl{Anonymizing} is the process of removing or modifying personally identifiable information (PII) from a dataset or file so that the original individual can no longer be identified. In the papers of Solas and Torra (\cite{solas1,solas2}) using $P$-stable approximation networks were suggested for that goal.

\bigskip\noindent
In the new approach, for bipartite graphs we consider regions obtained by fixing the two class sizes, a common degree sum, and lower and upper degree bounds in
each class. Then we describe a slight modification of a known approach, introduced by Gale (\cite{Gale}) in 1957, to study directed graphs via bipartite graphs with some auxiliary structures. We will call this construct as a \emph{partially paired bipartite graph} (or \textbf{PPBG} for short).
When the auxiliary structure is empty, then we retain the original bipartite graph notion. This extended notion is suitable to describe concepts and results in parallel for both graph classes.

Using this unifying approach we give a common characterization of full graphicality for both cases: it turns out that for proving the full graphicality of a region it is enough to test the graphicality of a single canonical degree sequence pair (denoted by LEG) for all feasible parameter sets.

The paper's main result then shows that every fully graphic partially paired bipartite region is $P$-stable.  More precisely, if the two class sizes are $n$ and $m$, then the relevant perturbation ratio is at most $(nm)^5$. The proof converts a realization of a perturbed degree sequence into a realization of the original sequence by reversing a short alternating path.  The essential structural step shows that if no such short path exists, then one can construct a non-graphic sequence inside the same region, contradicting full graphicality.

Finally we will use the LEG-characterization to derive several explicit fully graphic partially paired bipartite regions, which in turn provide substantial extensions of previously known $P$-stable regions.

The paper is organized as follows.  Section~\ref{sec:def} develops a suitable notation system to describe the bipartite and directed degree sequence problems in parallel. Section~\ref{sec:char} will develop a method to prove full graphicality for partially paired bipartite regions. Section~\ref{sec:forbid} will prove all fully graphic partially paired bipartite regions are $P$-stable as well. Finally Section~\ref{sec:examples} will provide several fully graphic bipartite and directed regions, extending several known regions as well as find new ones.

\section{Partially paired bipartite graphs}\label{sec:def}
\noindent In the first subsection we will recall some well established notations and facts about bipartite graphs, furthermore we will introduce the notion of degree sequence regions. In the next subsection we will discuss the fundamental facts about directed graphs and we will discuss our bipartite representation of directed graphs. Finally, in the very short third subsection we will discuss the notion of P-stability for PPBGs.

\subsection{Bipartite degree sequences}\label{sec:bipart}
\noindent An $(n,m)$-\textbf{bisequence} is an ordered pair of positive integer sequences
\[
(C,D)= ((c_1,\cdots, c_n), (d_1, \cdots, d_m) ).
\]
It is a $(n,m)$-\textbf{bidegree sequence} if
$$
1\le c_i\le m \quad (i\in [n]), \qquad 1\le d_j\le n\quad (j\in[m]), \qquad \sum_{i=1}^n c_i=\sum_{j=1}^m d_j.
$$
The common sum will be denoted by $\Sigma$.

A bidegree sequence $(C,D)$ is \textbf{graphic} if there exists a bipartite graph $G$ with no parallel edges and with pairwise disjoint labeled vertex classes
$$
U=\{ u_1,\ldots, u_n\}, \qquad W=\{w_1,\ldots, w_m\};
$$
such that
$$
d_G(u_i)=c_i\quad (i\in [n]), \qquad d_G(w_j)=d_j\quad (j\in [m]).
$$
Such a graph is called a \textbf{labeled realization} of $(C,D).$ We write
$\G(C,D)$ for the set of all labeled realizations and put $\G(C,D)=\varnothing$ when $(C,D)$ is not a graphic bidegree sequence.  The coordinates of $C$ and $D$ correspond to the fixed vertex labels.  When a graphicality criterion requires non-increasing sequences, we may reorder the coordinates by relabeling the vertices;
this does not change the number of realizations.

A \textbf{bidegree sequence region} is a set of bidegree sequences. If a formula $\varphi=\varphi(C,D)$ specifies such a set, we call this a
\textbf{bidegree region}  (or just region, for short) and denote it by
\begin{equation}\label{eq:region}
 \fB[\varphi] = \{ (C,D) : \varphi(C,D)\}.
\end{equation}
In particular, for positive integers $\Sigma; 0< \delta_2 < \delta_1 \le n;$  and $0< \gamma_2 < \gamma_1 \le m$  we consider the following region
\begin{multline}\label{eq:BSR}
\fB(\Sigma;n,\Gamma;m,\Delta)=
\biggl\{(C,D): |C|=n,\ |D|=m,\
\gamma_2\le c_i\le\gamma_1\ (i\in[n]),\\
\delta_2\le d_j\le\delta_1\ (j\in[m]),\
\sum_{i=1}^{n}c_i=\sum_{j=1}^{m}d_j=\Sigma \biggr\}.
\end{multline}
Here we use the $\Gamma=(\gamma_1,\gamma_2), \Delta=(\delta_1, \delta_2)$ shorthand. We will call this construction  a \textbf{basic region}. The region above can be nonempty only if
\begin{equation}\label{eq:nemkell}
 \max\{n\gamma_2,m\delta_2\}
 \le \Sigma\le
 \min\{n\gamma_1,m\delta_1\}.
\end{equation}
However, when one of the equalities holds, then we have a homogeneous sequence what we do not want to consider. Therefore in inequality \eqref{eq:nemkell} we do not allow equality.
A region is \textbf{simple} if it is a union of basic regions
$\fB(\Sigma;n,\Gamma;m,\Delta)$.  For fixed $n,m,\Gamma,\Delta$, we will consider the union of all basic regions $\fB(\Sigma;n,\Gamma;m,\Delta)$ as well, where $\Sigma$ runs all feasible values. Furthermore we call a region \textbf{very simple} if it is defined as
\begin{equation}\label{eq:BVSR}
 \fB(n,\Gamma;m,\Delta)
 =\bigcup_{\substack{\max\{n\gamma_2,m\delta_2\}<\Sigma\\
                     \Sigma<\min\{n\gamma_1,m\delta_1\}}}
   \fB(\Sigma;n,\Gamma;m,\Delta).
\end{equation}
Typically, a region $\fB$ may contain both graphic and non-graphic sequences. We will be specifically interested in regions in which every sequence is graphic. Such regions are called \textbf{fully graphic}. Thus, under this definition, the empty region is fully graphic vacuously.

\bigskip\noindent
Next we recall the  two most well known characterizations  of graphic bidegree sequences:
\begin{theorem}[Gale, Ryser - 1957, \cite{Gale,Ryser}]\label{th:ZZ1}
Let $(C,D)$ be an $(n,m)$-bidegree sequence and assume that $C=(c_1,\ldots,c_n)$ is non-increasing. It is graphic if and only if for any $k=1,2,\ldots,n$
\begin{equation}\label{eq:ZZ}
\sum_{i=1}^k c_i \le \sum_{j=1}^m \min (d_j, k).
\end{equation}
\end{theorem}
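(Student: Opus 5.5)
Necessity is a counting argument. Suppose $G$ realizes $(C,D)$ and fix $k$. The $k$ vertices $u_1,\dots,u_k$ send exactly $\sum_{i\le k}c_i$ edges into $W$. Vertex $w_j$ can receive at most $\min(d_j,k)$ of these edges: at most $d_j$ because that is its degree, and at most $k$ because $G$ has no parallel edges. Summing over $j$ gives \eqref{eq:ZZ}.

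For sufficiency I will use a max-flow/min-cut argument, which is shorter and more robust than a greedy construction. Build a network with source $s$, sink $t$, arcs $s\to u_i$ of capacity $c_i$, arcs $u_i\to w_j$ of capacity $1$ for all $i,j$, and arcs $w_j\to t$ of capacity $d_j$. An integral flow of value $\Sigma$ saturates every source and sink arc. Its unit arcs $u_i\to w_j$ then form a simple bipartite graph with the prescribed degrees. By the integral max-flow min-cut theorem it therefore suffices to show that every $s$--$t$ cut has capacity at least $\Sigma$.

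Take a cut $(S,\bar S)$ with $s\in S$, and let $A=U\cap \bar S$, $B=W\cap S$. The cut capacity equals
\[
\sum_{i\in A}c_i+\sum_{u_i\in U\setminus A}|\{w_j\notin B\}|+\sum_{j\in B}d_j
=\sum_{i\in A}c_i+(n-|A|)(m-|B|)+\sum_{j\in B}d_j .
\]
Put $K=U\setminus A$ with $|K|=k$. Requiring this to be at least $\Sigma=\sum_i c_i$ is equivalent to
\[
\sum_{i\in K}c_i\le \sum_{j\in B}d_j+k\,(m-|B|)=\sum_{j\in B}d_j+\sum_{j\notin B}k .
\]
For fixed $K$ the right-hand side is minimized over $B$ by putting $j$ into $B$ exactly when $d_j<k$. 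The minimum is therefore $\sum_j\min(d_j,k)$.

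For fixed $k$ the left-hand side is largest when $K$ consists of the $k$ largest entries of $C$. Since $C$ is non-increasing, these are $c_1,\dots,c_k$. Hence the worst cut is handled precisely by \eqref{eq:ZZ} for that $k$. The case $k=0$ is trivial, and the case $k=n$ is implied by the condition for $k=n$.

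The only delicate step is this reduction of an arbitrary cut to the worst one: the choice of $B$ must be made independently of $K$, and the non-increasing ordering must be used exactly once. Everything else is standard flow integrality.
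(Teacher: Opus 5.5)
Your proof is correct and complete. Necessity is the standard double count. In the sufficiency part you have all the pieces: the cut-capacity formula, the coordinatewise minimization over $B$ (which gives $\sum_j\min(d_j,k)$ and depends only on $k=|K|$), and the maximization over $K$ through the non-increasing order. Together they give the chain $\sum_{i\in K}c_i\le\sum_{i\le k}c_i\le\sum_j\min(d_j,k)\le\sum_{j\in B}d_j+k(m-|B|)$ for every cut, which is what max-flow/min-cut together with integrality requires. The sentence ``the case $k=n$ is implied by the condition for $k=n$'' is vacuous and can simply be dropped.

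The paper gives no proof of this theorem; it only cites Gale and Ryser. Your argument is essentially Gale's original flow-based route, whereas Ryser worked directly with $0$--$1$ matrices.

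The flow formulation is useful in this paper's setting because it carries over verbatim to the $\F$-free case. Deleting the arcs $u_i\to w_j$ with $u_iw_j\in\F$ yields the exact criterion $\sum_{i\in K}c_i\le\sum_{j\in B}d_j+\bigl|(K\times(W\setminus B))\setminus\F\bigr|$ for all $K\subseteq U$, $B\subseteq W$. Unlike \eqref{eq:ZZ}, however, this does not collapse to one inequality per $k$ by ordering $C$ alone, because which vertices of $K$ are $\F$-paired now matters. The paper handles the $\F$-free case through Berger's theorem and Lemma~\ref{lem:canonical-matching} instead.
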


\noindent
In 1992 Zverovich and Zverovich developed the bipartite analog of the Tripathi-Vijay theorem (\cite{TV03}) for simple graphs:
\begin{theorem}[{\cite[Theorem 8]{ZZ}}]\label{th:ZZ2}
Under the assumptions above, it suffices to check the inequality at $k=n$ and at those indices $k<n$ for which
$c_k>c_{k+1}$.
\end{theorem}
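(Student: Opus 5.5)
We show that if inequality \eqref{eq:ZZ} holds at $k=n$ and at every index $k<n$ with $c_k>c_{k+1}$, then it holds for every $k\in[n]$; Theorem~\ref{th:ZZ1} then gives graphicality. Write
\[
L(k)=\sum_{i=1}^k c_i, \qquad R(k)=\sum_{j=1}^m \min(d_j,k), \qquad f(k)=R(k)-L(k),
\]
with $f(0)=0$. We must show $f(k)\ge 0$ for all $k$, knowing it at the "break points".

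The key observation is that $f$ is concave on $\{0,1,\dots,n\}$ inside each block of constant $C$-value. Indeed,
\[
R(k)-R(k-1)=\#\{j: d_j\ge k\},
\]
which is non-increasing in $k$, so $R$ is concave. Also $L(k)-L(k-1)=c_k$, and this is constant on any block of indices $a+1,\dots,b$ where $c_{a+1}=\dots=c_b$. Hence the increments $f(k)-f(k-1)=\#\{j:d_j\ge k\}-c_k$ are non-increasing for $k\in\{a+1,\dots,b\}$. So $f$ restricted to $\{a,a+1,\dots,b\}$ is concave, and a concave function on an integer interval attains its minimum at an endpoint.

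Now take the maximal blocks of equal values in the non-increasing sequence $C$. Their right endpoints $b$ are exactly the indices with $c_b>c_{b+1}$, together with $b=n$. Their left endpoints $a$ are either $0$ or the right endpoint of the previous block. By hypothesis, and since $f(0)=0$, we have $f\ge0$ at all these endpoints. By concavity within each block, $f(k)\ge\min(f(a),f(b))\ge 0$ for every $k$ in the block. This covers all $k\in[n]$, and Theorem~\ref{th:ZZ1} finishes the proof.

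The only step requiring care is the concavity claim across the closed interval $[a,b]$. The increments used are those for $k=a+1,\dots,b$, all of which have the same $c_k$, so the left endpoint $a$, where $c_a>c_{a+1}$, causes no problem. The rest is routine.
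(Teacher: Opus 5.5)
Your proof is correct and complete. The paper does not prove this statement; it quotes it from Zverovich--Zverovich \cite{ZZ} and applies it, for instance in the proof of Lemma~\ref{lem:alpha+1}, to reduce graphicality of the LEG to the two break indices $k=\alpha$ and $k=\alpha+1$. Your discrete-concavity argument is therefore a genuine self-contained replacement for that citation. On each maximal constant block of $C$, the increments $|\{j:d_j\ge k\}|-c_k$ of $f(k)=\sum_j\min(d_j,k)-\sum_{i\le k}c_i$ are non-increasing. Hence $f\ge\min\{f(a),f(b)\}$ on the block $\{a,\dots,b\}$, and the block endpoints are exactly $0$ (where $f(0)=0$), the break indices, and $n$.

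Two small points are worth recording. First, you correctly never use any ordering of $D$, which matches the statement. Second, in the bipartite setting the check at $k=n$ is vacuous: since $d_j\le n$, we get $\sum_j\min(d_j,n)=\Sigma=\sum_i c_i$, so $f(n)=0$.

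Your route also shows why the reduction is so easy here, which the citation hides. The $D$-side of \eqref{eq:ZZ} is concave in $k$ and does not depend on how the two sequences are aligned. Neither property holds for the $D$-side $\sum_i\min(d_i,k)-|\{i\le k:d_i\ge k\}|$ of Berger's inequality \eqref{eq:Berger} (cf.\ Remark~\ref{rem:berger_order}). For example, for $D=(3,2,1,3)$ its values at $k=0,\dots,4$ are $0,3,5,8,9$, which is not concave. So the same short argument does not carry over verbatim to the directed case.
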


\subsection{Directed degree sequences}\label{sec:direct}
\noindent A simple directed graph $\vec G$ on the underlying set $X$ of $n$ elements without \textbf{loop}s,  \textbf{parallel edge}s and \textbf{isolated} vertices has the \textbf{directed degree sequence}
$$
\scalebox{1.4}{(}(d^+(x_1),\cdots, d^+(x_n)) (d^-(x_1),\cdots,d^-(x_n)) \scalebox{1.4}{)}
$$
where vertices are listed in a non-increasing order in the out-degree sequence. A directed degree sequence $(C,D)$ is \textbf{graphic} if there exists a directed graph on the vertex set $X$ such that $\forall i=1,\cdots,n : d^+(x_i)=c_i \text{ and } d^-(x_i)=d_i.$ Since no loop is allowed, therefore each in- and out-degree is at most $n-1.$

\medskip
A suitable necessary and sufficient condition for a directed sequence being graphic was sought for a long time. The typical early solutions allowed loops and/or parallel edges. The most used necessary and sufficient condition  for graphicality of digraph sequences on $n$ vertices, without loops and/or parallel edges, is due to Berger \cite{Berger}
\begin{theorem}[Berger, Theorem 2]\label{th:berger}
The directed degree sequence with a non-increasing $C$ is \textbf{graphic} if and only if\ $\sum_{i=1}^n c_i=\sum_{i=1}^n d_i$ and we have
\begin{equation}\label{eq:Berger}
\sum_{i=1}^k\min (d_i,k-1) + \sum_{i=k+1}^n\min (d_i, k) \ge \sum_{i=1}^k c_i
\end{equation}
for each $k=1,\ldots,n-1$ where $c_k>c_{k+1}$  and for $k=n.$
\end{theorem}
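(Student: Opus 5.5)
The plan is to reduce Berger's condition to the Gale--Ryser criterion (Theorem~\ref{th:ZZ1}), combined with the index reduction of Theorem~\ref{th:ZZ2}. We use Gale's bipartite encoding. A loopless digraph $\vec G$ on $X=\{x_1,\dots,x_n\}$ becomes a bipartite graph on $U=\{u_1,\dots,u_n\}$ and $W=\{w_1,\dots,w_n\}$ with an edge $u_iw_j$ exactly when $x_i\to x_j$. The pairs $u_iw_i$ are forbidden. Hence $(C,D)$ is digraphic if and only if $K_{n,n}$ minus the perfect matching $\{u_iw_i\}$ has a subgraph with degrees $c_i$ on $u_i$ and $d_j$ on $w_j$.

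For necessity, fix $k$ and count the edges leaving $S=\{u_1,\dots,u_k\}$. These edges number $\sum_{i\le k}c_i$. A vertex $w_j$ with $j\le k$ can receive at most $\min(d_j,k-1)$ of them, because $u_jw_j$ is forbidden. A vertex with $j>k$ can receive at most $\min(d_j,k)$. Summing gives \eqref{eq:Berger}, and the degree sum equality is trivial.

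For sufficiency, I would use the max-flow/min-cut (Gale) form of the supply--demand theorem on the bipartite graph with the forbidden matching removed. Under the sum equality, a realization exists if and only if for every $S\subseteq U$ and $T\subseteq W$ we have
\[
\sum_{i\in S}c_i\le\sum_{j\notin T}d_j+e(S,T),
\]
where $e(S,T)=|S||T|-|\{i\in S: w_i\in T\}|$ counts the allowed pairs. For fixed $S$ with $|S|=k$, the optimal $T$ is $T=\{w_j: d_j> k-[x_j\in S]\}$. This turns the right-hand side into $\sum_{j}\min(d_j,k-[j\in S])$, so the condition reads
\[
\sum_{i\in S}c_i\le\sum_{j\in S}\min(d_j,k-1)+\sum_{j\notin S}\min(d_j,k).
\]

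The next step is to show that among all $S$ of size $k$ the worst case is $S=\{1,\dots,k\}$, i.e., the $k$ largest $c_i$. This is the delicate point: enlarging $\sum_{S} c_i$ also changes which $d_j$ get capped at $k-1$. Swap an element $i\in S$ for $i'\notin S$ with $c_{i'}\ge c_i$. The left side grows by $c_{i'}-c_i\ge0$. The right side changes by at most $1$, and only when $c_{i'}=c_i$. I therefore expect the main obstacle to be the ties. Berger handles them with the convention that, among vertices with equal $c$, one orders by non-increasing $d$; removing a vertex with larger $d$ from the $k-1$ cap costs no more. I would adopt the same tie-breaking in relabeling (allowed, as noted after Theorem~\ref{th:ZZ1}). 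Then a swap argument shows the initial segment minimizes slack. This gives the full family of inequalities for all $k$.

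Finally, I would restrict to the indices $k$ with $c_k>c_{k+1}$, together with $k=n$, by mimicking the proof of Theorem~\ref{th:ZZ2}. Let $c_k=c_{k+1}=\dots=c_{\ell}>c_{\ell+1}$. The function
\[
f(k)=\text{RHS}(k)-\sum_{i\le k}c_i
\]
has increments
\[
f(k+1)-f(k)=\#\{j: d_j\ge k+1\text{-ish}\}-c_{k+1}\pm1.
\]
This increment is non-increasing in $k$ within a block of equal $c$, up to the tie corrections. So $f$ is concave on the block, and its minimum is attained at the block endpoints. The left endpoint belongs to an earlier block, so checking the block ends suffices.

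I expect this last concavity computation, with the $-1$ corrections for the diagonal, to need care. It is the second main obstacle, after the tie-breaking in the swap step.
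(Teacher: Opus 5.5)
The paper does not prove this theorem; it is quoted from Berger, so your proposal has to stand on its own. Most of the architecture is sound: the supply--demand (max-flow) criterion with the diagonal forbidden, the reduction to $\sum_{i\in S}c_i\le\sum_j\min(d_j,|S|-[j\in S])$, and the swap to the lexicographically initial segment are all correct. One wording fix: under a swap the right-hand side changes by $[d_i\ge k]-[d_{i'}\ge k]$ whether or not $c_i=c_{i'}$. What you actually use is that the slack changes by $[d_i\ge k]-[d_{i'}\ge k]-(c_{i'}-c_i)\le 0$.

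The step that fails as written is the last one. $f$ need not be concave on a block, and the corrections in $f(k+1)-f(k)$ are not $\pm1$. Exactly,
\[
h(k):=f(k+1)-f(k)=|\{j:d_j\ge k+1\}|-[d_{k+1}\ge k+1]+|\{j\le k:d_j=k\}|-c_{k+1}.
\]
For a block $c_s=\dots=c_t$ and $s-1\le k\le t-2$,
\[
h(k+1)-h(k)=[d_{k+1}\ge k+1]-[d_{k+2}\ge k+2]-|\{j>k+1:d_j=k+1\}|-|\{j\le k:d_j=k\}|,
\]
and this can equal $+1$. For example, $C=(2,2,1,1)$, $D=(2,0,2,2)$ is already in lexicographic order. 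It gives $f(0)=0$, $f(1)=0$, $f(2)=1$ on the block $\{1,2\}$, so the increments increase. Hence ``concave, so the minimum is at the block ends'' is not a valid inference.

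The conclusion is nevertheless true, but it needs one more idea. The second difference is positive only when $d_{k+1}\ge k+1$ and $d_{k+2}\le k$. Because $d$ is non-increasing inside the block (the lexicographic tie-break is used here a second time), this happens for at most one $k$ per block, and then its value is exactly $1$. Thus $h(b)-h(a)\le 1$ whenever $a<b$ lie in the block. Now suppose some $f(k^*)$ were strictly below both $f(s-1)$ and $f(t)$. Since increments are integers, this would require some $h(a)\le -1$ with $a<k^*$ and some later $h(b)\ge 1$, that is, $h(b)-h(a)\ge 2$. Therefore $\min_{s-1\le k\le t}f(k)=\min\{f(s-1),f(t)\}$. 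Both $s-1$ (or $0$, where $f(0)=0$) and $t$ are among the checked indices.

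Finally, say explicitly why you may impose the lexicographic tie-break. The statement only assumes $C$ non-increasing, and relabeling preserves graphicality but not, a priori, the hypothesis. The switch is harmless because at a drop index and at $k=n$ the set $\{1,\dots,k\}$ is a union of whole tie-blocks. So exactly the inequalities being assumed are invariant under reordering within blocks.
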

\noindent This result is clearly the exact analog of the Tripathi-Vijay theorem.

\begin{remark}\label{rem:berger_order}~
The left hand side of \eqref{eq:Berger} is equal to  $\sum_{i=1}^n \min(d_i,k) - |\{i \leq k: d_i \geq k\}|$, from which it is clear that \eqref{eq:Berger} becomes tighter for each $k$ simultaneously when the $D$ sequence is rearranged to be monotone decreasing. Of course, this rearrangement makes the condition sufficient but not necessary anymore.
\end{remark}

\bigskip
\paragraph{Bipartite representations of directed graphs}
The first known (to us) bipartite representation of directed graphs was introduced by Gale (\cite{Gale}). We will use a slightly more elaborated version.
\begin{definition}
\emph{A partially paired bipartite graph} (PPBG for short) $(G,\F)$ is a bipartite graph $G(U,W)$ together with a set of pairwise disjoint vertex pairs $\F \subset U \times W$, such that $E(G) \cap \F = \emptyset$, that is, no vertex is adjacent to its $\F$-pair. We also call this object as an \emph{$\F$-free bipartite graph}.
 \end{definition}

\begin{definition}
A partially paired bipartite graph $(G,\F)$ represents the directed graph $\oG(X,E)$ on $|X|=|U|+|W|-|\F|$ nodes if there are injective functions $i_U : U \to X$ and $i_W : W \to X$ such that $i_U(u) = i_W(w)$ if and only if $(u,w)\in \F$, and if $(u,w) \in E(G) \Leftrightarrow \overrightarrow{i_U(u)i_W(w)} \in E(\oG)$.
\end{definition}
\noindent In words: when a vertex has zero out-degree in the original $\vec G$, then this vertex will not be represented in $U$. The analogous situation occurs, when the vertex has no in-degree. Of course, because our directed graph has no isolated vertex, therefore each original vertex is represented in at least one of the classes.  The edge set $\F$ represents the vertices with both positive in- and out-degree. Therefore $|\F|$ many vertices of the original directed graph will be represented in both $U$ and $W$. This explains the equation for $X$. The number of vertices in the original directed graph with zero out-degree is $m-|\F|.$ The analogous number for zero in-degree is $n-|\F|.$

\begin{lemma}\label{th:biject}
Every directed graph can be represented by a PPBG, and vice versa, every PPBG represents a directed graph that is unique up to relabeling of its nodes.
\end{lemma}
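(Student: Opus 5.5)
The proof has two directions, each an explicit construction; the only real work is checking that the construction meets the definition of representation.

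\emph{Directed graph to PPBG.} Let $\oG(X,E)$ be a directed graph without loops, parallel edges or isolated vertices. Set
\[
X^+=\{x\in X: d^+(x)>0\},\qquad X^-=\{x\in X: d^-(x)>0\}.
\]
Take a fresh copy $U=\{u_x: x\in X^+\}$ of $X^+$ and a fresh copy $W=\{w_x:x\in X^-\}$ of $X^-$, disjoint from each other. Let $i_U(u_x)=x$ and $i_W(w_x)=x$; both maps are injective. Define
\[
\F=\{(u_x,w_x): x\in X^+\cap X^-\},\qquad E(G)=\{u_xw_y: \overrightarrow{xy}\in E\}.
\]
Then check the following.
\begin{enumerate}[(i)]
\item Every edge $u_xw_y$ is well defined, since $\overrightarrow{xy}\in E$ forces $x\in X^+$ and $y\in X^-$. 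Since $E$ has no parallel edges, $G$ is a simple bipartite graph.
\item The pairs in $\F$ are pairwise disjoint, because each $x$ yields exactly one pair.
\item $E(G)\cap\F=\emptyset$, because $u_xw_x\in E(G)$ would require the loop $\overrightarrow{xx}$.
\item $i_U(u)=i_W(w)$ holds exactly when $u=u_x$ and $w=w_x$ for the same $x$, that is, exactly when $(u,w)\in\F$.
\item Since $\oG$ has no isolated vertices, $X=X^+\cup X^-$. Hence $|X|=|U|+|W|-|\F|$.
\end{enumerate}

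\emph{PPBG to directed graph.} Let $(G,\F)$ be a PPBG. Let $X$ be the quotient of $U\sqcup W$ obtained by identifying $u$ with $w$ whenever $(u,w)\in\F$. Because the pairs in $\F$ are disjoint and each lies in $U\times W$, every class has one or two elements, and two-element classes are exactly the $\F$-pairs. So $|X|=|U|+|W|-|\F|$, and the natural maps $i_U$, $i_W$ are injective with $i_U(u)=i_W(w)$ iff $(u,w)\in\F$. Put $\overrightarrow{i_U(u)i_W(w)}\in E$ iff $uw\in E(G)$. The resulting digraph has no loops: a loop would need $i_U(u)=i_W(w)$ with $uw\in E(G)$, that is, an edge inside $\F$, which is excluded. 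It has no parallel edges because the maps are injective and $G$ is simple.

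One caveat concerns isolated vertices. The representation does not itself exclude isolated vertices, since a vertex of $G$ with degree zero gives one. If isolated-vertex-free digraphs are intended, one either assumes $G$ has no isolated vertices or drops the convention; I would state this explicitly.

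\emph{Uniqueness.} Suppose $\oG_1$ and $\oG_2$ are both represented by $(G,\F)$, via maps $(i_U,i_W)$ and $(j_U,j_W)$. Define $\phi:X_1\to X_2$ by
\[
\phi(i_U(u))=j_U(u),\qquad \phi(i_W(w))=j_W(w).
\]
$\phi$ is well defined: a point lying in both images comes from a pair $(u,w)\in\F$, and then $j_U(u)=j_W(w)$ as well.
\begin{itemize}
\item $\phi$ is surjective because $X_2$ is covered by $j_U(U)\cup j_W(W)$. This uses the count $|X_2|=|U|+|W|-|\F|$ together with the fact that the two images overlap in exactly $|\F|$ points.
\item $\phi$ is injective by the symmetric argument applied to $\phi^{-1}$.
\item $\phi$ preserves arcs because both arc sets are determined by $E(G)$.
\end{itemize}

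The only subtle point is the covering fact used for surjectivity: that $X$ equals $i_U(U)\cup i_W(W)$. This follows from the cardinality condition in the definition, so it is here that the formula for $|X|$ must be invoked.
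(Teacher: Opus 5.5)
Your proof is correct and is essentially the paper's argument. The forward construction is exactly the paper's: the images of $i_U$ and $i_W$ are the vertices of positive out- and in-degree, $\F$ pairs the two copies of each vertex with both degrees positive, and arcs become edges. Your quotient construction and the uniqueness map $\phi$ spell out what the paper dismisses as ``clear from the definitions''.

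One caution concerns the arc-preservation step. It relies on the intended reading that every arc of $\oG$ runs from $i_U(U)$ to $i_W(W)$. The literal biconditional in the definition leaves arcs out of $i_W(W)\setminus i_U(U)$, or into $i_U(U)\setminus i_W(W)$, unconstrained. For example, with $\F=\emptyset$ and a single edge $uw$, both a single arc and a digon would qualify. So this point, and not only the covering fact, is where the definition has to be read as intended.
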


\begin{proof}
 For a given directed graph $\overrightarrow{G}(X,E)$, let $n$ denote the number of its nodes with positive out-degree and $m$ denote the number of its nodes with positive in-degree. Let $U = \{u_1,u_2,\dots,u_n\}$ and $W = \{w_1,w_2,\dots, w_m\}$, and fix injections $i_U : U \to X, i_W : W \to X$ such that the image of $i_U$ (respectively $i_W$) is exactly the set of nodes with positive out-degree (respectively in-degree). Let $\F = \{ (i_U^{-1}(x), i_W^{-1}(x)) | x \in X: d^+(x),d^-(x) > 0\}$. Finally let $G$ be a bipartite graph on $(U,W)$ with edge set given by $E(G) = \{ (i_U^{-1}(x), i_W^{-1}(y)) : \overrightarrow{xy} \in E(\oG)$.

 Vice versa, it is clear from the definitions, that an $\F$-free bipartite graph with vertex classes $U$ and $W$ represents a directed graph with $n+m-|\F|$ vertices. If the vertex $u\in U$ is covered by $\F$, then $u$ cannot have more than $m-1$ neighbors in $W.$ When we consider another $\F'$ with the same bidegree sequence, then this situation can change. Since we do not (and cannot) account for these alternatives, we will assume that
\[
\forall u\in U: d(u)<m \quad\text{and}\quad \forall w\in W:  d(w)<n.
\]
Since this bipartite representation of directed graphs is symmetric (does not depend which class represents the out-degrees and in-degrees), we will assume that $n\ge m.$

\end{proof}
\begin{remark}\label{th:Berg}
Applying the Gale-Ryser inequality \eqref{eq:ZZ} for an $\F$-free bipartite degree sequence can be done easily, but applying the Berger inequality \eqref{eq:Berger} from Theorem \ref{th:berger} would be more tricky, since when both sequences are in non-increasing order, then the $\F$ forbidden edges will connect vertices in different subscripts.

There are at least two strategies here for checking graphicality by Berger's theorem. On one hand side, applying the bijection among PPBGs and directed graphs described in Lemma \ref{th:biject}, we can apply Berger's theorem freely. On the other hand in the cases of this article when we want to check the graphicality of given PPBGs (like in Lemma \ref{th:biggest} or Theorem \ref{th:b-fully1}), we have the property that each forbidden edge connects vertex pairs with equal subscripts. Therefore the condition becomes necessary and sufficient again.
\end{remark}

\bigskip\noindent

\noindent
Now we define \textbf{$\F$-free bidegree regions}. For any formula involving the degree sequences and the forbidden partial matching $\F$, let us denote
\begin{equation}\label{eq:diregion3}
 \fD[\varphi] = \{ (C,D,\F) : \varphi(C,D,\F)\}.
\end{equation}
For a fixed matching $\F\subseteq U\times W$, define the corresponding
\textbf{basic $\F$-free region} by
\begin{align}\label{eq:DSR-fixed}
\fD(&\Sigma;n,\Gamma;m,\Delta;\F)=
\Big\{(C,D;\F): |C|=n,\ |D|=m,
\sum C=\sum D=\Sigma; \nonumber\\
&\hspace{35mm}
m-1\geq\gamma_1\geq c_i\geq\gamma_2
\text{ and }
n-1\geq\delta_1\geq d_j\geq\delta_2\Big\}.
\end{align}
The appropriate generalizations of the simple and very simple regions can also be defined:
\begin{definition}
A \textbf{simple} ($\F$-free bidegree) region is given as
\begin{align}\label{eq:DSR-S}
\fD(&\Sigma,n,\Gamma,m,\Delta;f)=\Big\{(C,D;\F): |C|=n,  |D|=m,\ \sum C=\sum  D=\Sigma,\ |\F|=f; \\
& m-1\ge \gamma_1\ge c_i\ge \gamma_2 \quad
\text{ and }\quad  n-1\ge \delta_1\ge d_j\ge \delta_2  \Big \}.\nonumber
\end{align}
A \textbf{very simple} region is given as
\[
\fD(n,\Gamma,m,\Delta;f)=\bigcup_{\substack{\max\{n\gamma_2,m\delta_2\}<\Sigma\\
                     \Sigma<\min\{n\gamma_1,m\delta_1\}}} \fD(\Sigma,n,\Gamma,m,\Delta;f).
\]
Equivalently,
\[
 \fD(\Sigma,n,\Gamma,m,\Delta;f)
 =\bigcup_{|\F|=f}\fD(\Sigma;n,\Gamma;m,\Delta;\F).
\]
\end{definition}

As in the bipartite case, a region may contain elements that are not graphic. We will be specifically interested in regions in which every element is graphic. Such regions are called \textbf{fully graphic}. For the sake of simplicity, we are omitting the $\F$-free and bipartite tags when referring to regions and graphicality as long as it does not lead to confusion.

\subsection{P-stability of partially paired bipartite sequences}\label{sec:stable}

\paragraph{P-stability}
The notion of $P$-stability of an infinite set of graphic degree sequences was introduced by Jerrum and Sinclair in \cite{JS90}. The notion plays a crucial role in complexity theory and graph theory. An analogous notion was introduced for bipartite and directed graphs only in 2022 by the paper \cite{P-stable}, where it was proved that the appropriate switch Markov chains are rapidly mixing on P-stable degree regions. Thanks to the notion of $\F$-free bidegree region, we can introduce these closely related notions from \cite{P-stable} in one go.

For a bidegree sequence $(C,D)$, let $\G(C,D,\F)$ denote the set of all $\F$-free bipartite realizations of $(C,D)$ (and let this set be empty when no such realization exists).  If $(C,D)$ has an $\F$-free realization, define its $\F$-free \textbf{perturbation ratio} by
\begin{equation}\label{eq:localP-F}
\pppp{(C,D,\F)}=\sum_{\substack{1\le i\le n\\1\le j\le m}}
\frac{\left|\G(C+e_i,D+e_j,\F)\right|}{\left|\G(C,D,\F)\right|},
\end{equation}
where the vector $e_i$  is the $i$th  unit vector of the appropriate dimension. Since $(C,D;\F)$ is $\F$-free graphic, therefore the denominator is non-zero. The operation $(C,D)\mapsto (C+e_i,D+e_j)$  is called  a \textbf{perturbation operation} on the bidegree sequence. The convention $\G(C',D',\F)=\varnothing$ applies whenever the perturbed pair is not a $\F$-free graphic bidegree sequence.  Notice that the perturbation increases both degree sums by one and therefore preserves
their equality.

An infinite region $\fB$ is called {\bf $P$-stable } if there is a polynomial $p(x)$ such that
\begin{equation}\label{eq:pstab}
\forall (C,D,\F) \in \fB: \mbox{ $(C,D,\F)$ is $\F$-free graphic} \Rightarrow \pppp{(C,D,\F)}\le p(|C|+|D|).
\end{equation}
When the edge set $\F=\emptyset$, then this definition coincides with the definition of $P$-stability of bipartite degree sequences, defined in \cite{P-stable}.

The notion can be extended meaningfully for finite regions by fixing the polynomial $p(x)$ instead of just assuming its existence. Thus, we will say that a region is $p$-stable when it satisfies \eqref{eq:pstab} for a concrete polynomial $p = p(x)$. Let us emphasize that we do not require    every element of a $P$-stable family to be graphic.

All these definitions, notions and notations are natural analogs for degree sequence regions, introduced in \cite{fully}.

\section{A general method to prove that an $\F$-free region is fully graphic}\label{sec:char}
\noindent
In this section we will describe a general approach to find fully $\F$-free graphic regions. The main benefit of these results lies in Theorem  \ref{th:stable}, which proves, that all fully $\F$-free graphic regions are also $(nm)^5$-stable, therefore the corresponding switch Markov chain is rapidly mixing on them, due to \cite{P-stable}.

\paragraph{A LEG type characterization.}
In the paper \cite{fully} a straightforward  sufficient and necessary condition (\cite[Theorem 2.5]{fully}) was introduced to decide whether a simple region is fully graphic.\footnote{The name of the notion was drawn from the expression \textbf{least Erd\H{o}s--Gallai degree sequence} referring the seminal Erd\H{o}s-Gallai theorem. While for bipartite and directed degree sequences other results provide necessary and sufficient conditions, we keep the name LEG.}
Here we will extend this approach for $\F$-free regions.

\begin{definition}\label{def:LEG}
Recall, that  $\gamma_1>\gamma_2>0$ and $\delta_1>\delta_2>0.$  Let us fix the parameter $f$ between $0$ and $\min(n,m).$ Define the bidegree sequence \textbf{LEG} as follows:
$$
\lbd(\Sigma,n,\Gamma;m,\Delta;f)=(\bar C,\bar D,\F_0)
$$
where
\begin{align}\label{eq:LEG}	
\bar C=\underbrace{\gamma_1,\ldots,\gamma_1}_{\alpha}, \gamma, &\underbrace{\gamma_2, \ldots, \gamma_2}_{(n-1)-\alpha} \quad\text{and}\quad
\bar D=\underbrace{\delta_1,\ldots,\delta_1}_{\beta}, \delta, \underbrace{\delta_2, \ldots, \delta_2}_{(m-1)-\beta}; \\
\F_0&=\{u_1w_1,u_2w_2,\ldots,u_fw_f\}.
\end{align}
We can easily compute that
\begin{equation}\label{eq:alfa}
\alpha=\left\lfloor \frac{\Sigma -n\cdot \gamma_2} {\gamma_1-\gamma_2}\right \rfloor \quad\text{and}\quad
\beta=\left\lfloor \frac{\Sigma -m\cdot \delta_2} {\delta_1-\delta_2}\right \rfloor;
\end{equation}
furthermore
\begin{equation}\label{eq:alfa2}
\gamma=\Sigma-[\alpha\gamma_1+(n-1-\alpha)\gamma_2]\quad\text{and}\quad
\delta=\Sigma-[\beta\delta_1+(m-1-\beta)\delta_2].
\end{equation}
Let's remark it is possible that $\gamma=\gamma_2$ or $\delta=\delta_2.$
\end{definition}

\begin{remark}\label{th:alfa}
Berger's inequalities show that this LEG is automatically graphic whenever
$\alpha>\delta_1$ or $\alpha<\delta_2$.  The analogous statements hold with
$\beta,\gamma_1,\gamma_2$ in place of
$\alpha,\delta_1,\delta_2$.  The equalities cannot be included in these
automatic cases.  For example, if
\[
 n=m=3,\quad f=2,\quad
 \gamma_1=\delta_1=2,\quad
 \gamma_2=\delta_2=1,\quad \Sigma=5,
\]
then $\alpha=\beta=2$ and
\[
 \bar C=\bar D=(2,2,1),\qquad
 \F_0=\{u_1w_1,u_2w_2\}.
\]
This triple has no avoiding realization: the degrees of $u_1$ and $u_2$
force both of them to be adjacent to $w_3$.
Thus the remaining parameter range is
\begin{equation}\label{eq:alfa1}
\delta_1\geq\alpha\geq\delta_2
\qquad\text{and}\qquad
\gamma_1\geq\beta\geq\gamma_2.
\end{equation}
\end{remark}

\begin{theorem}\label{th:DI-LEG}
The simple $\F$-free region $\fD(\Sigma,n,\Gamma,m,\Delta;f)$ is fully graphic if and only if the bidegree sequence $\lbb(\Sigma;n,\Gamma;m,\Delta;f)$ is graphic.
\end{theorem}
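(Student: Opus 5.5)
The "only if" direction is immediate once we check that $\lbb(\Sigma;n,\Gamma;m,\Delta;f)$ is a member of the region $\fD(\Sigma,n,\Gamma,m,\Delta;f)$. Its degree sums are $\Sigma$ by \eqref{eq:alfa2}, we have $\gamma_2\le\gamma<\gamma_1$ and $\delta_2\le\delta<\delta_1$ by the choice of $\alpha,\beta$ in \eqref{eq:alfa}, and $|\F_0|=f$.

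For the "if" direction we show that the LEG is the \emph{hardest} member of the region, and I would argue via majorization. Take an arbitrary $(C,D;\F)$ in the region. First relabel the vertices so that $C$ is non-increasing. Then use the symmetry of the problem to reduce to the case where $\F$ pairs $u_i$ with $w_{\pi(i)}$ for some injective assignment. We would like to reduce to $\F=\F_0$ with both sequences non-increasing. This is not a free relabeling, so I would proceed as follows. By the bijection of Lemma \ref{th:biject} and Berger's Theorem \ref{th:berger}, graphicality of an $\F$-free triple is equivalent to the inequalities \eqref{eq:Berger}. Here the $k-1$ correction applies exactly to those $i\le k$ that are $\F$-paired within the first $k$ rows, in the form given by Remark \ref{rem:berger_order}: the left-hand side equals $\sum_j\min(d_j,k)$ minus the number of $\F$-pairs $(u_i,w_j)$ with $i\le k$ and $d_j\ge k$. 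This number is at most $\min(k,f,|\{j:d_j\ge k\}|)$. The LEG with $\F_0$ on the first $f$ indices and $\bar D$ non-increasing attains this maximum: its pairs sit on the largest $d$'s and the largest $c$'s. So it suffices to compare left- and right-hand sides separately.

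The comparison rests on two majorization facts. Among all integer sequences of length $n$ with entries in $[\gamma_2,\gamma_1]$ and sum $\Sigma$, the sequence $\bar C$ majorizes every other one. It is the "most extreme" sequence: fill with $\gamma_1$ as long as possible, use one middle value, then $\gamma_2$. Hence $\sum_{i\le k}c_i\le\sum_{i\le k}\bar c_i$ for every $k$. Similarly $\bar D$ majorizes every admissible $D$. Since $x\mapsto\min(x,k)$ is concave, majorization gives $\sum_j\min(d_j,k)\ge\sum_j\min(\bar d_j,k)$. Combined with the maximal subtraction term for the LEG, every inequality of the LEG implies the corresponding one for $(C,D;\F)$. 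The $k=n$ sum condition is identical. Note also that the Berger inequalities must be used for all $k$, not only at breakpoints; that is harmless since the full family is necessary as well.

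The main obstacle is the subtraction term. For the LEG, the necessity direction of Berger's inequalities needs the LEG's $\F_0$ to pair equal subscripts with both sequences sorted; Remark \ref{th:Berg} notes this holds. One must also verify carefully that the count $|\{i\le k: (u_i,w_j)\in\F,\ d_j\ge k\}|$ for the LEG is indeed $\ge$ that of an arbitrary $(C,D;\F)$ for each $k$. The concern is that $\bar D$ has fewer entries $\ge k$ than $D$ might, when $k$ lies between $\delta_2$ and $\delta_1$. I would handle this by directly combining the two terms: show that $\sum_j\min(d_j,k)-|\text{pairs}|$ is minimized over admissible $D$ and $\F$ at $(\bar D,\F_0)$. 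For this, use a local exchange argument. Moving one unit from a smaller $d_j$ to a larger $d_{j'}$ never increases the combined quantity, and pairing $\F$ onto the top indices never decreases the subtraction. Iterate until $D=\bar D$.
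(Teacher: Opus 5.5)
Your proof is correct, but it takes a different route from the paper.

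The paper uses Berger's theorem only to change the matching. It applies Lemma~\ref{lem:canonical-matching} with $C=\bar C$ and $D=\bar D$ fixed, where the comparison $P_{\F}(k)\le\min\{f,k,q_k\}=P_{\F_0}(k)$ is immediate because both sides use the same $D$. The variation of $C$ and $D$ is handled at the level of realizations: take the lexicographically largest non-graphic $C'$, move one unit from a surplus index to a deficit index to get a graphic $C^\star\succ C'$, and recover $C'$ by an $\F$-free hinge flip. Such a flip exists since $c^\star_i\ge c^\star_j+2$ while at most one pair at $u_j$ is forbidden.

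You instead prove a termwise domination: for every $k$, the Berger inequality of the LEG implies the corresponding inequality of an arbitrary member. This is a stronger and more uniform statement. However, it forces you to handle the coupling you correctly flagged: $q_k(D)=|\{j:d_j\ge k\}|$ can exceed $q_k(\bar D)$, so concavity of $\min(\cdot,k)$ alone is not enough. The paper's decomposition avoids this coupling entirely.

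Your exchange step does hold. Consider a transfer of one unit from $d_j$ to $d_{j'}\ge d_j$.
\begin{itemize}
\item The sum $\sum_j\min(d_j,k)$ drops by one exactly when $d_j\le k\le d_{j'}$, and is otherwise unchanged.
\item $q_k$ can increase only when $d_{j'}=k-1$, and in that case the sum is unchanged.
\item $q_k$ can decrease only when $d_j=k$, and in that case the sum drops by one.
\end{itemize}
Hence $\sum_j\min(d_j,k)-\min\{k,f,q_k(D)\}$ never increases along such transfers. Moreover, any admissible $D\neq\bar D$ has two entries strictly between $\delta_2$ and $\delta_1$. So these transfers stay inside the box $[\delta_2,\delta_1]$ and terminate at $\bar D$.

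One small slip: in the directed representation the final Berger index is $N=n+m-f$, not $n$. The inequality at $N$ is automatic because every in-degree is at most $\delta_1\le n-1\le N-1$. The index $k=n$ itself is covered by your general comparison.
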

Before we turn to proving this statement we introduce an operation on $\F$-free bipartite graphs. Let $G$ be an $\F$-free realization of the bidegree sequence $(C,D)$, and let $x\in U, y,z\in W$ be distinct vertices such that $xy$ is an edge and $xz$ is a non-edge while $xz\not\in \F$. The $\F$-free \textbf{hinge-flip} operation $xy \rightarrow xz$ deletes the edge $xy$ from $G$ and adds the edge $xz$ to obtain a new, different graph $G'$, and their bidegree sequences are also different: while $C$ does not change, but $D'\ne D.$ The roles of $U$ and $W$ are reversible. The operation was already introduced in \cite{JS89} as a \textbf{Type 0 transition}.

We first record the comparison between the canonical forbidden matching and
an arbitrary forbidden matching of the same size.
\begin{lemma}\label{lem:canonical-matching}
Let
\[
 C=(c_1,\ldots,c_n),\qquad D=(d_1,\ldots,d_m)
\]
be non-increasing bidegree sequences on
$U=\{u_1,\ldots,u_n\}$ and $W=\{w_1,\ldots,w_m\}$.  Fix
$0\leq f\leq\min\{n,m\}$ and put
\[
 \F_0=\{u_iw_i:1\leq i\leq f\}.
\]
If $(C,D,\F_0)$ is graphic, then $(C,D,\F)$ is graphic for every matching
$\F\subseteq U\times W$ with $|\F|=f$.
\end{lemma}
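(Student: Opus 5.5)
The plan is to replace the Gale--Ryser condition by its well-known generalization to bipartite realizations inside a prescribed host graph, and then to compare the resulting cut inequalities for $\F_0$ and for an arbitrary matching $\F$. Let $H_\F$ denote the complete bipartite graph on $(U,W)$ minus the pairs of $\F$. By the max-flow min-cut theorem (equivalently, Ore's / Gale's theorem on degree-constrained bipartite subgraphs), $(C,D,\F)$ is graphic if and only if $\sum C=\sum D$ and for every $S\subseteq U$ and $T\subseteq W$
\begin{equation*}
\sum_{u_i\in S} c_i \;\le\; \sum_{w_j\in T} d_j \;+\; |S|\cdot|W\setminus T| \;-\; \bigl|\F\cap (S\times (W\setminus T))\bigr| .
\end{equation*}
I would state this as a standard tool. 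It comes from the network with source arcs of capacity $c_i$, sink arcs of capacity $d_j$, and unit arcs along the edges of $H_\F$. A cut determined by $S$ and $T$ has capacity $\sum_{U\setminus S}c_i+\sum_T d_j+e_{H_\F}(S,W\setminus T)$.

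Next, assume $(C,D,\F_0)$ is graphic and fix any $\F$ with $|\F|=f$, together with any $S\subseteq U$, $T\subseteq W$. Put $k=|S|$ and $l=|W\setminus T|$. Let $S_0=\{u_1,\dots,u_k\}$ and $W\setminus T_0=\{w_1,\dots,w_l\}$, i.e.\ $T_0=\{w_{l+1},\dots,w_m\}$. Since $C$ and $D$ are non-increasing, we have $\sum_{S}c_i\le\sum_{S_0}c_i$ and $\sum_{T}d_j\ge\sum_{T_0}d_j$, because $T_0$ consists of the $m-l$ smallest entries of $D$. The middle term $kl$ is the same for both pairs of sets. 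For the forbidden pairs, $\F$ is a matching, so $|\F\cap(S\times(W\setminus T))|\le\min(k,l,f)$. On the other hand $|\F_0\cap(S_0\times(W\setminus T_0))|=|\{i\le f: i\le k,\ i\le l\}|=\min(k,l,f)$, which is exactly this maximum. Chaining these, the inequality for $(S,T,\F)$ follows from the inequality for $(S_0,T_0,\F_0)$:
\begin{equation*}
\sum_{S}c_i\le\sum_{S_0}c_i\le \sum_{T_0}d_j+kl-\min(k,l,f)\le \sum_{T}d_j+kl-\bigl|\F\cap(S\times(W\setminus T))\bigr|.
\end{equation*}
Hence every cut condition holds for $\F$, and $(C,D,\F)$ is graphic.

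The main obstacle is only the clean justification of the cut criterion with a forbidden matching. This is where the argument has content; the rest is monotonicity bookkeeping. I would derive the criterion directly from max-flow min-cut and integrality of flows, rather than cite Berger's theorem. As Remark~\ref{th:Berg} notes, Berger's condition is awkward when forbidden pairs do not share subscripts, which is exactly the situation for a general $\F$.
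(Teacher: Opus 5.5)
Your proof is correct, and it takes a genuinely different route from the paper's.

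\textbf{The paper's route.} The paper passes to the directed representation on $N=n+m-f$ vertices and invokes Berger's theorem (Theorem~\ref{th:berger}). It checks only the prefix inequalities, writing the left-hand side as $L_{\F}(k)=\sum_j\min\{d_j,k\}-P_{\F}(k)$ with $P_{\F}(k)=|\{(u_i,w_j)\in\F: i\le k,\ d_j\ge k\}|$. It then uses $P_{\F}(k)\le\min\{f,k,q_k\}=P_{\F_0}(k)$. Afterwards it must treat separately the zero padding of the out-degree sequence, the final index $k=N$, and the case $N=n$.

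\textbf{Your route.} You use the max-flow/min-cut (Gale--Ore) criterion, which quantifies over all pairs $(S,T)$. You then reduce an arbitrary cut for $\F$ to the sorted cut $(S_0,T_0)$ for $\F_0$.

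\textbf{How they relate.} Both arguments rest on the same fact: the diagonal matching $\F_0$ simultaneously maximizes the number of forbidden pairs in every ``top-left rectangle''. In fact, if you fix $S=\{u_1,\dots,u_k\}$ and minimize your right-hand side over $T$, you get exactly the paper's $L_{\F}(k)$. So Berger's inequalities are the prefix specializations of your cut inequalities, with the best $T$ already chosen.

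\textbf{What each buys.} Your version is uniform and self-contained. It needs no directed encoding, no index bookkeeping, and no endpoint cases. It also adapts immediately to other forbidden sets, since the only property of $\F$ you use is the rectangle bound $|\F\cap(S\times(W\setminus T))|\le\min(k,l,f)$. The paper's version reuses a theorem already stated and used elsewhere in the paper (e.g.\ in Lemma~\ref{lem:DILEG}). It also stays within the degree-sequence-inequality framework of the rest of Section~\ref{sec:char}, at the cost of the case analysis at the end.
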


\begin{proof}
This statement is an easy consequence of Remark \ref{rem:berger_order}, but for completeness, here we give a detailed proof of it as well. Fix such a matching $\F$, and put $N=n+m-f$.  In the directed representation of the PPBG, order the $N$ vertices so that the first $n$ are the images of $u_1,\ldots,u_n$ and the remaining $m-f$ are the images of the unpaired vertices of $W$.  The resulting out-degree sequence is
\[
 A=(c_1,\ldots,c_n,
       \underbrace{0,\ldots,0}_{m-f}),
\]
which is non-increasing.  Let $B^{\F}=(b_1,\ldots,b_N)$ be the corresponding
in-degree sequence.  For $i\leq n$, we have $b_i=d_j$ if
$(u_i,w_j)\in\F$, and $b_i=0$ if $u_i$ is unpaired.  The remaining
coordinates of $B^{\F}$ are the $d_j$ corresponding to the unpaired
vertices $w_j$.

For $k\leq n$, the left-hand side of Berger's inequality for
$(A,B^{\F})$ is
\begin{align*}
 L_{\F}(k)
 &=\sum_{r=1}^{N}\min\{b_r,k\}
       -|\{r\leq k:b_r\geq k\}|\\
 &=\sum_{j=1}^{m}\min\{d_j,k\}-P_{\F}(k),
\end{align*}
where
\[
 P_{\F}(k)=|\{(u_i,w_j)\in\F:i\leq k,\ d_j\geq k\}|.
\]
Put $q_k=|\{j:d_j\geq k\}|$.  Since $\F$ is a matching,
\[
 P_{\F}(k)\leq\min\{f,k,q_k\}.
\]
Since $D$ is non-increasing, the canonical matching satisfies
\[
 P_{\F_0}(k)
 =|\{i:i\leq f,\ i\leq k,\ d_i\geq k\}|
 =\min\{f,k,q_k\}.
\]
Therefore
\[
 L_{\F}(k)\geq L_{\F_0}(k)
 \qquad (k\leq n).
\]

The out-degree sequence $A$ does not depend on the matching.  Since
$(C,D,\F_0)$ is graphic, Berger's theorem gives
\[
 L_{\F_0}(k)\geq\sum_{i=1}^{k}c_i
\]
at every index $k\leq n$ required by that theorem.  The comparison above
gives the same inequality for $\F$.  If $N>n$, there are no further drop
indices before $N$, and the final inequality at $k=N$ is equality: every
in-degree is at most $n\leq N-1$, so both sides equal $\Sigma$.  If $N=n$,
the final index is already covered above; moreover, $m=f$, so every $w_j$ is
paired and graphicality of $(C,D,\F_0)$ implies $d_j\leq n-1$.  Hence all
inequalities required by Berger's theorem hold for $(A,B^{\F})$, and
$(C,D,\F)$ is graphic.
\end{proof}

\begin{proof}[Proof of the Theorem \ref{th:DI-LEG}]
The ``only if'' direction follows because
$(\bar C,\bar D,\F_0)$ belongs to the region.  For the converse, list the
elements of $C$ and $D$ in non-increasing order, transporting $\F$ under the
corresponding relabelings.  The transported forbidden set is still a matching
of size $f$.

We consider the \textbf{lexicographic decreasing order} $\succ$ on all pairs $(C,D)$ in $\fD$, so  $(C,D) \succ (C',D')$ iff $C \succ C'$ or $C=C'$ and $D\succ D'$. Under this ordering the largest element of $\fD$ is $(\bar C,\bar D)$.

Fix a forbidden edge set $\F$ with  $f$ edges, and fix the sequence $D^*$. As always, all sequences are in non-increasing order. Take all possible sequences $C$ such that
$$(C,D^*,\F) \in \fD(\Sigma,n,\Gamma;m,\Delta;\F).$$
Denote the set of these sequences by $\mC$. Clearly, $\bar C$ is the lexicographically  largest element in $\mC$.
\begin{lemma}
Assume that $(\bar C,D^*,\F)$ is graphic (there exists an $\F$-free bipartite realization). Then each pair in $(\mC,D^*,\F)$ is also graphic.
\end{lemma}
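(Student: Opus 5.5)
The plan is to show that graphicality propagates downward from $\bar C$ through $\mC$ via small moves, each realized by an $\F$-free hinge-flip. The structural fact I need about $\mC$ is a connectivity statement under unit transfers. Every $C\in\mC$ has entries in $[\gamma_2,\gamma_1]$ and sum $\Sigma$. I claim that every $C\in\mC$ other than $\bar C$ has an immediate predecessor obtained by one \emph{unit transfer}: pick indices $i,k$ with $c_i<\gamma_1$, $c_k>\gamma_2$ and $c_i\ge c_k$, and replace $c_i$ by $c_i+1$ and $c_k$ by $c_k-1$. The resulting sequence $C'$ majorizes $C$ and is lexicographically larger.

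Iterating transfers from any $C$ terminates at $\bar C$. The sum stays fixed and the quantity $\sum c_i^2$ strictly increases, so the process stops. It stops only at a sequence with at most one entry strictly between $\gamma_2$ and $\gamma_1$, and that sequence is exactly $\bar C$. Hence it suffices to prove the following one-step statement. If $(C',D^*,\F)$ is graphic and $C$ is obtained from $C'$ by a reverse transfer, then $(C,D^*,\F)$ is graphic. A reverse transfer takes one unit from a vertex $u_a$ with $c'_a\ge c'_b+2$ and gives it to $u_b$. The induction then runs downward along the majorization chain starting from $(\bar C,D^*,\F)$.

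For the one-step statement, take an $\F$-free realization $G'$ of $(C',D^*)$. We have $d(u_a)\ge d(u_b)+2$, so $N(u_a)\setminus N(u_b)$ has at least two elements. I will perform a hinge-flip $u_a w\to u_b w$ for some $w\in N(u_a)\setminus N(u_b)$ with $u_bw\notin\F$. This changes only the $U$-side degrees in the prescribed way and keeps $D^*$ fixed. Such a move is legitimate because $\F$ is a matching: $u_b$ has at most one forbidden partner. So among the at least two candidates $w\in N(u_a)\setminus N(u_b)$, at least one is not $\F$-paired with $u_b$. The resulting graph is an $\F$-free realization of $(C,D^*)$, with $\F$ unchanged.

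The main obstacle is the bookkeeping that a non-increasing $C$ stays realizable after relabeling. Relabeling permutes the vertices of $U$ and carries $\F$ along with it. However, the lemma fixes $\F$ as a set on labeled vertices, and graphicality of $(C,D^*,\F)$ is a statement about labeled coordinates. So I will work with labeled vectors throughout and perform transfers between arbitrary coordinates rather than insisting on sorted order. The majorization argument does not need sortedness, and $\bar C$ is reached as a labeled vector up to a permutation of its coordinates. Here I must check that the hypothesis covers the permuted $\bar C$. If it does not, I will use the gap of $2$ in the one-step statement to move units between equal-valued coordinates as needed; a careful treatment may instead invoke Lemma~\ref{lem:canonical-matching} to pass between matchings. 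The case where the gap is exactly $1$ never arises in a reverse transfer, since a gap of $1$ would just swap the two values and needs no move.
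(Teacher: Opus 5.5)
Your one-step hinge flip is correct and matches the paper's final step. The reduction to it, however, has a genuine gap, namely the one you flag at the end and do not close. Because you allow transfers between arbitrary coordinates, your chain starting from a non-increasing $C\in\mC$ leaves $\mC$. It ends not at $\bar C$ but at some rearrangement $\pi\bar C$ of it. (Your rule even permits $k<i$ when $c_i=c_k$: with $\gamma_1=3$, $\gamma_2=1$ it sends $(2,2)$ to $(1,3)$, which is neither non-increasing nor lexicographically larger, contrary to your claim.)

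The hypothesis concerns only the labeled triple $(\bar C,D^*,\F)$, and with $\F$ held fixed, graphicality is not invariant under permuting $C$. Take $D^*=(2,2,1)$ and $\F=\{u_2w_1,u_3w_2\}$. The triple with $C=(2,2,1)$ is realized by the edges $u_1w_1,u_1w_2,u_2w_2,u_2w_3,u_3w_1$. For $C=(1,2,2)$, however, $u_2$ and $u_3$ are forced onto $\{w_2,w_3\}$ and $\{w_1,w_3\}$, which overloads $w_3$. So your downward induction has no base.

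Neither repair you sketch works. A move between equal-valued coordinates has gap $0$. Moreover, every reverse transfer with gap at least $2$ strictly decreases $\sum c_i^2$, so no sequence of your certified moves can lead from $\bar C$ to a different arrangement of the same multiset. Lemma~\ref{lem:canonical-matching} requires graphicality with the canonical matching $\F_0$ (or with every matching of size $f$), which is not the hypothesis here. That stronger information happens to be available inside the proof of Theorem~\ref{th:DI-LEG}, but using it means proving a weaker lemma than the one stated.

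The missing idea is to choose the transfer so that sortedness is preserved. Let $C\in\mC$ be non-increasing with $C\neq\bar C$. Your own termination analysis shows that $C$ has at least two entries strictly between $\gamma_2$ and $\gamma_1$. Let $i$ be the first index with $c_i<\gamma_1$ and $k$ the last index with $c_k>\gamma_2$. Then $i<k$, and $i$ opens while $k$ closes the block of intermediate values. Hence $C+e_i-e_k$ is again non-increasing, lies in $\mC$, and is lexicographically larger, so the chain terminates at $\bar C$ itself. Your hinge flip with the fixed $\F$ then propagates graphicality back down.

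With this correction your argument is essentially the paper's. The paper phrases it as a lexicographically largest non-graphic $C'$ and the move $C^\star=C'+e_i-e_j$. There $i$ is the first deficit index and $j$ is the last index of the constant block containing the first surplus index.
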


\begin{proof}
Consider the largest $C'\in \mC$ such that $(C',D^*,\F)$ is not graphic.
By our assumption $\bar C\succ C'$.  Let $i$ be the first index such that
$\bar c_i>c'_i$, and let $j_0>i$ be the first index such that
$\bar c_{j_0}<c'_{j_0}$.  Such indices exist because the two sequences have
the same sum.  We have $i\leq\alpha+1\leq j_0$.

Let $j$ be the last index in the constant block of $C'$ containing $j_0$;
that is, $c'_j=c'_{j_0}$ and either $j=n$ or $c'_j>c'_{j+1}$.  This $j$ is
still a surplus index.  Indeed, if $j_0>\alpha+1$, then
$\bar c_j=\bar c_{j_0}=\gamma_2<c'_{j_0}=c'_j$.  If
$j_0=\alpha+1$ and $j>j_0$, then
$\bar c_j=\gamma_2\leq\bar c_{j_0}<c'_{j_0}=c'_j$; the case $j=j_0$ is
immediate.

Set $C^\star=C'+e_i-e_j$.  The choice of the first deficit index permits
the increase at $i$: if $i>1$, then the coordinates before $i$ agree with
$\bar C$, and hence
$c'_{i-1}=\bar c_{i-1}\geq\bar c_i\geq c'_i+1$.
The choice of the last index in its constant block permits the decrease at
$j$: if $j<n$, then $c'_j-1\geq c'_{j+1}$.  Hence $C^\star$ is
non-increasing.  Moreover, the deficit and surplus inequalities show that
all its entries remain between $\gamma_2$ and $\gamma_1$.  Thus
$C^\star\in\mC$ and $C^\star\succ C'$.  By the maximality of $C'$, the
bidegree sequence $(C^\star,D^*)$ is graphic; let $G^\star$ be a realization.

Since $i<j$ and $C'$ is non-increasing,
\[
 c^\star_i=c'_i+1\geq c'_j+1=c^\star_j+2.
\]
Consequently there is a vertex $w\in W$ such that
$u_iw\in E(G^\star)$, $u_jw\notin E(G^\star)$, and $u_jw\notin\F$.
(The degree difference is at least two, while at most one pair incident with
$u_j$ is forbidden.)  The hinge flip $u_iw\rightarrow u_jw$ gives a
realization of $(C',D^*)$, a contradiction.
\end{proof}
\noindent Our next task is to prove that the triplet $(\bar C,D,\F)$ is also graphic for each feasible $D.$ Take all possible sequence $D$ such that
$$(\bar C,D) \in \fD(\Sigma,n,\Gamma;m,\Delta;\F).$$
Denote the set of these sequences by $\mD.$ Clearly, $\bar D$ is the lexicographically largest element in $\mD.$
\begin{lemma}\label{th:biggest}
If $(\bar C,\bar D,\F)$ is graphic, then for each $D\in\mD$ the triple
$(\bar C,D,\F)$ is also graphic.
\end{lemma}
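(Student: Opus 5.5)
We mirror the argument of the preceding lemma, with the roles of $U$ and $W$ exchanged. Fix $\F$ with $|\F|=f$ and the first sequence $\bar C$. As before, everything is relabeled so that $D$ is non-increasing, with $\F$ transported accordingly. We argue by contradiction. Let $D'\in\mD$ be the lexicographically largest element such that $(\bar C,D',\F)$ is not graphic. By assumption $\bar D\succ D'$.

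\textbf{Choosing the indices.} Let $i$ be the first index with $\bar d_i>d'_i$, and let $j_0>i$ be the first index with $\bar d_{j_0}<d'_{j_0}$. Both exist because the two sequences have the same sum $\Sigma$. Since $\bar D$ consists of a block of $\delta_1$'s, then $\delta$, then $\delta_2$'s, we get $i\le\beta+1\le j_0$. Let $j$ be the last index of the constant block of $D'$ containing $j_0$. Exactly as in the previous lemma:
\begin{itemize}
\item $j$ is still a surplus index, i.e.\ $\bar d_j<d'_j$;
\item $D^\star=D'+e_i-e_j$ is non-increasing;
\item all entries of $D^\star$ stay in $[\delta_2,\delta_1]$.
\end{itemize}
Hence $D^\star\in\mD$ and $D^\star\succ D'$. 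By maximality of $D'$, the triple $(\bar C,D^\star,\F)$ has an $\F$-free realization $G^\star$.

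\textbf{The hinge flip.} Since $i<j$ and $D'$ is non-increasing,
\[
d^\star_i=d'_i+1\ge d'_j+1=d^\star_j+2 .
\]
So $w_i$ has at least two more neighbours in $U$ than $w_j$. Consequently, the set of $u\in U$ with $uw_i\in E(G^\star)$ and $uw_j\notin E(G^\star)$ has at least two elements. Since $\F$ is a matching, at most one vertex of $U$ is forbidden to $w_j$. We can therefore choose such a $u$ with $uw_j\notin\F$. The hinge flip $uw_i\rightarrow uw_j$ (with the roles of $U$ and $W$ swapped) keeps $\bar C$ fixed and turns $D^\star$ into $D'$. The resulting graph is still $\F$-free, so it realizes $(\bar C,D',\F)$, which is a contradiction.

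The lemma is stated for an arbitrary matching $\F$, so the argument needs no use of $\F_0$. The fact that $\F$ is a matching is what lets us avoid the forbidden pair at $w_j$.

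The only delicate step is checking that $j$ remains a surplus index and that $D^\star$ stays inside the box. The cases are:
\begin{itemize}
\item $j_0>\beta+1$: here $\bar d_j=\delta_2<d'_j$.
\item $j_0=\beta+1$ and $j>j_0$: here $\bar d_j=\delta_2\le\delta<d'_{j_0}=d'_j$.
\item $j=j_0$: immediate.
\end{itemize}
For the box constraints, increasing at $i$ gives $d^\star_i\le\bar d_i\le\delta_1$. Decreasing at $j$ gives $d^\star_j\ge\bar d_j\ge\delta_2$. This case check is a verbatim transcription of the one for $C$, so I expect no real obstacle.
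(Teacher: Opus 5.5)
Your proposal is correct and follows the paper's approach: the paper proves this lemma by repeating the preceding smoothing argument with $U$ and $W$ exchanged. As there, the unit transfer is taken from the last index $j$ of the relevant constant block of $D'$, and the contradiction comes from a hinge flip at a vertex of $U$ that avoids the at most one forbidden pair at $w_j$. You have simply written out the details (the choice of $i$, $j_0$, $j$, the box and monotonicity checks, and the hinge-flip count) that the paper leaves to the reader.
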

\begin{proof}
Repeat the preceding proof with the two vertex classes exchanged, using the
last index in the relevant constant block of $D'$ for the downward unit
transfer.
\end{proof}
It remains to pass from the canonical forbidden matching to the arbitrary
matching fixed above.
\begin{lemma}\label{lem:fdependence}
If $\lbd(\Sigma,n,\Gamma;m,\Delta;f)=(\bar C,\bar D,\F_0)$ is graphic,
then $(\bar C,\bar D,\F)$ is graphic for every matching $\F$ of size $f$.
\end{lemma}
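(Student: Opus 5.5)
The plan is to derive this directly from Lemma~\ref{lem:canonical-matching}. By Definition~\ref{def:LEG}, the sequences $\bar C$ and $\bar D$ are already non-increasing. Indeed, $\bar C$ consists of $\alpha$ copies of $\gamma_1$, then $\gamma$, then copies of $\gamma_2$, and the definition of $\alpha$ through the floor in \eqref{eq:alfa} guarantees $\gamma_2\le\gamma<\gamma_1$; the same reasoning applies to $\bar D$ with $\beta$ and $\delta$. Also, $\F_0=\{u_iw_i:1\le i\le f\}$ is exactly the canonical matching appearing in Lemma~\ref{lem:canonical-matching}, with vertices labelled in the order of these non-increasing sequences. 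The hypotheses of that lemma are therefore satisfied verbatim by $(\bar C,\bar D,\F_0)$, and its conclusion gives that $(\bar C,\bar D,\F)$ is graphic for every matching $\F\subseteq U\times W$ with $|\F|=f$.

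The key steps, in order, are as follows. First, I verify that $\gamma_2\le\gamma\le\gamma_1$ and $\delta_2\le\delta\le\delta_1$ hold, so that the displayed order is indeed non-increasing. This is a short floor-function computation: with $\alpha=\lfloor(\Sigma-n\gamma_2)/(\gamma_1-\gamma_2)\rfloor$ one gets $0\le\gamma-\gamma_2<\gamma_1-\gamma_2$. Here I use the strict inequalities in \eqref{eq:nemkell} to ensure $0\le\alpha\le n-1$. Second, I note that the vertex labels $u_1,\dots,u_n$ and $w_1,\dots,w_m$ are attached to these non-increasing orders, so $\F_0$ pairs the $i$-th largest degrees on the two sides. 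Third, I apply Lemma~\ref{lem:canonical-matching}. An arbitrary $\F$ is given in the relabelled (sorted) coordinates fixed in the proof of Theorem~\ref{th:DI-LEG}, and since relabelling maps matchings to matchings of the same size, nothing further is needed.

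The only point requiring care is the boundary behaviour of the degree bounds. Lemma~\ref{lem:canonical-matching} tacitly uses, through Berger's theorem, that paired vertices have degree at most $n-1$ (respectively $m-1$); this is built into the region by $\gamma_1\le m-1$ and $\delta_1\le n-1$ in \eqref{eq:DSR-S}. If Lemma~\ref{lem:canonical-matching} were not available, I would instead reproduce its argument. I would compare Berger's left-hand sides and show that $P_{\F}(k)\le\min\{f,k,q_k\}=P_{\F_0}(k)$ for each $k$, which makes $\F_0$ the hardest matching. Within that argument, this comparison is the main obstacle; here it has already been carried out, so the proof is essentially a one-line citation.
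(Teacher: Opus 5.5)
Your proposal is correct and takes the same approach as the paper, whose proof is the one-line application of Lemma~\ref{lem:canonical-matching} with $C=\bar C$ and $D=\bar D$. Your floor computation showing $\gamma_2\le\gamma<\gamma_1$ and $\delta_2\le\delta<\delta_1$ just makes explicit the non-increasing hypothesis that the paper leaves implicit.
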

\begin{proof}
Apply Lemma~\ref{lem:canonical-matching} with $C=\bar C$ and $D=\bar D$.
\end{proof}
For the fixed matching $\F$, the last lemma first gives the graphicality of
$(\bar C,\bar D,\F)$.  Lemma~\ref{th:biggest} then gives the graphicality of
$(\bar C,D,\F)$ for every feasible $D$, and the first smoothing lemma gives
the graphicality of $(C,D,\F)$ for every feasible $C$.  Since $\F$ was
arbitrary, the whole region is fully graphic.  This finishes the proof of
Theorem~\ref{th:DI-LEG}.
\end{proof}

\section{Every fully $\F$-free graphic simple region is P-stable} \label{sec:forbid}
\begin{theorem}\label{tm:main}\label{th:stable}
Let $U=\{u_1,\ldots,u_n\}$ and $W=\{w_1,\ldots,w_m\}$ be disjoint
vertex classes, where $n\ge m$, and let $\F\subseteq U\times W$ be a
matching, which can be empty.  If the basic region
\[
\fD=\fD(\Sigma;n,\Gamma;m,\Delta;\F)
\]
is fully $\F$-free graphic, then
\[
\pppp{(C,D,\F)}\le (nm)^5
\]
for every $(C,D;\F)\in\fD$.
\end{theorem}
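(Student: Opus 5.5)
Fix $(C,D;\F)\in\fD$; since $\fD$ is fully graphic, $(C,D,\F)$ is graphic. For each pair $(i,j)$ I will build a map
\[
\Phi_{ij}:\G(C+e_i,D+e_j,\F)\to\G(C,D,\F)
\]
whose fibres have size at most $(nm)^3$. Summing over the $nm$ pairs $(i,j)$ then gives $\pppp{(C,D,\F)}\le nm\cdot(nm)^3\le (nm)^5$, with room to spare.

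Let $G'$ be a realization of $(C+e_i,D+e_j)$. If $u_iw_j\in E(G')$, we delete that edge and are done. Otherwise we look for an \emph{alternating path} from $u_i$ to $w_j$ in $G'$ that starts and ends with edges and alternates edge/non-edge, where every non-edge used lies outside $\F$. Swapping edges and non-edges along the path lowers the degrees of $u_i$ and $w_j$ by one and keeps every other degree fixed, giving a realization of $(C,D)$. I will aim for paths of length at most $5$: $u_i\,w\,u\,w'\,u'\,w_j$, i.e.\ three edges and two non-edges. The image then determines $G'$ once we know the at most $4$ intermediate vertices, so each fibre has size at most $(nm)^2\le (nm)^3$. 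Here $\Phi_{ij}$ takes the first admissible path in a fixed order.

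The core step is to show that such a short path always exists when $\fD$ is fully graphic, and I will argue by contradiction. Suppose no alternating path of length $1,3,5$ exists. Define
\begin{itemize}
\item $A=N_{G'}(u_i)\subseteq W$;
\item $B\subseteq U$, the set of vertices $u\neq u_i$ having a neighbour in $A$;
\item $A'\subseteq W$, the vertices reachable from $B$ by a non-$\F$ non-edge.
\end{itemize}
The absence of paths means that $w_j$ is not adjacent to $u_i$, and that no vertex reachable by $u_i\to A\to$(non-edge) has a non-edge to $w_j$ outside $\F$. This forces strong structure. Vertices in the second layer are adjacent to $w_j$ except possibly at their $\F$-partner, so they have nearly full neighbourhoods; and $u_i$'s neighbours are adjacent to everything they can reach.

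From this structure I want to exhibit a bidegree sequence in the same region $\fD$ that violates the Gale--Ryser inequality of Theorem~\ref{th:ZZ1}, or Berger's inequality of Theorem~\ref{th:berger} in the paired case. The candidate is obtained from $(C,D)$ by moving degree within the allowed bounds $[\gamma_2,\gamma_1]$, $[\delta_2,\delta_1]$. Concretely:
\begin{itemize}
\item raise the degrees of the saturated vertices of $B$ (and of $u_i$'s neighbourhood) to $\gamma_1$, $\delta_1$;
\item lower the degrees of the complementary vertices toward $\gamma_2$, $\delta_2$;
\item keep $\Sigma$ fixed.
\end{itemize}
The key observation is that the density forced on the set $S=\{u_i\}\cup B$ against $T=W\setminus(A'\cup\{w_j\})$ is tight. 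The sum $\sum_{u\in S}c_u$ already attains the bound $\sum_{w}\min(d_w,|S|)$ at the cut $k=|S|$, up to the extra edges at $u_i,w_j$. Perturbing back by $-e_i-e_j$ while keeping $\Sigma$ fixed requires shifting one unit inside the region, and that shift can be placed so as to break the inequality. I would first try the cut $k=|S|$ and check that the slack is exactly $-1$ after the shift.

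The main obstacle is this last step. One must ensure that the constructed sequence stays inside the box constraints. The degree that has to be moved needs a donor vertex not at its lower bound and a recipient not at its upper bound; the strictness $\max\{n\gamma_2,m\delta_2\}<\Sigma<\min\{n\gamma_1,m\delta_1\}$ guarantees such vertices exist. One must also ensure that the $\F$-pairs, which cost at most one unit per vertex, do not destroy the violated inequality. If length-$5$ paths turn out to be insufficient in some configuration, I would allow length $7$. This raises the fibre bound to $(nm)^3$ and the total to $(nm)^4\le(nm)^5$, which is presumably why the exponent $5$ leaves slack.
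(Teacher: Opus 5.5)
There is a genuine gap. The step you flag as the main obstacle, that fully graphicality forces a short edge-abundant path, is the entire content of the theorem, and your sketch of it does not work as stated.

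\textbf{The layering has the wrong parity.} An edge-abundant path of length $3$ is $u_i\,w\,u\,w_j$, where $u_iw$ is an edge, $wu$ is a chord that is a \emph{non-edge}, and $uw_j$ is an edge. So the second layer must be the anti-neighbourhood of $A=N(u_i)$, not the set of vertices having a neighbour in $A$. The absence of such paths then says that these vertices are \emph{not} adjacent to $w_j$. Equivalently, every chord between $N(w_j)$ and $N(u_i)$ is an edge. Your conclusion that second-layer vertices are adjacent to $w_j$ and have nearly full neighbourhoods is the reverse of what holds. The correct picture from lengths $3$ and $5$ is this: every chord of $N(w_j)\times N(u_i)$ is an edge, and there are no edges between the two anti-neighbourhoods. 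That is the start of the paper's layering $X_1,Y_1$. Your claimed tightness of Gale--Ryser at $k=|S|$ rests on the reversed structure and is never derived.

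\textbf{No mechanism keeps a non-graphic sequence inside the box.} Shifting degrees numerically throws away the graph that would certify the violated cut. The strict bounds on $\Sigma$ only guarantee that \emph{some} donor and recipient exist, not ones whose shift keeps the inequality violated. Nothing in your argument controls the degrees of vertices outside the first layers, which is exactly where the in-region condition is delicate.

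The paper handles this by never leaving the world of graphs:
\begin{enumerate}[(i)]
\item It builds layers $X_1,Y_1,X_2,Y_2$ and a remainder $Z$ on each side. Absence of paths of length $3,5,7,9$ gives, in turn: all chords of $X^U_1\times X^W_1$ are edges; $Y^U_1\times Y^W_1$ has no edges; all chords of $X^U_{1,2}\times X^W_{1,2}$ are edges; $Y^U_2\times Y^W_2$ has no edges.
\item It applies hinge flips only inside $(Z^U\times Y^W_2)\cup(Y^U_2\times Z^W)\cup(Z^U\times Z^W)$. So degrees change only at remote vertices $u$.
\item The degree of such a $u$ is squeezed between $|N(u_i)|-1=c_i\ge\gamma_2$ and $\deg_G(x)\le\gamma_1$ for any $x\in N(w_j)$. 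The upper bound holds because $x$ is joined to every chord of $X^W_{1,2}\cup S^W$ and also to $w_j$.
\item The terminal graph $G_L$ carries a blocking configuration. An alternating-trail argument in $E(G_L)\triangle E(H)$ then shows that $\mathbf{bd}(G_L)-(e_i,e_j)$ has no $\mathcal F$-free realization.
\end{enumerate}

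Consequently, your suggestion that length $5$ or $7$ suffices, and that the exponent $5$ is slack, is unsupported. The paper's argument genuinely uses lengths $7$ and $9$, and $(nm)^5$ is exactly the count of ordered paths with at most five vertices on each side. Your fibre-counting step is fine, but only conditional on the existence lemma you have not proved.
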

\noindent Of course the statement extends readily for any union of such basic regions, that is for every fully $\F$-free graphic simple and very simple region.

\medskip\noindent
We first introduce some additional terminology.  Let $G=(U,W,E_G)$ be an
$\F$-free bipartite graph; that is, $E_G\cap \F=\emptyset$.  Set
\begin{displaymath}
\C=(U\times W)\setminus \F,\qquad
\aE_G=\C\setminus E_G.
\end{displaymath}
We call the elements of $\C$ \emph{chords} and the elements of $\aE_G$
\emph{non-edges}.  Thus the chords are precisely the vertex pairs that may
occur as edges in an $\F$-free realization, and throughout this section a
non-edge always means a chord which is not an edge of the graph under
discussion.  Equivalently, for each such graph $G$,
$\C=E_G\uplus\aE_G$ is the disjoint union of its edges and non-edges.

We also need several definitions analogous to those for simple graphs in
\cite[Section 2]{Simple-fully}.
\begin{definition}\label{df:spec-trails}
Let $G$ be an $\F$-free bipartite graph on $(U,W)$, and let $u_p\in U$
and $w_q\in W$.  An alternating trail of edges and non-edges from $u_p$ to $w_q$ necessarily has odd length.  Such a trail is \textbf{edge-abundant} if its first element is an edge.  Thus an edge-abundant trail contains one more edge than non-edges. An edge-abundant trail of length at most $k$ is a
\textbf{$k$-witness trail}.  When the length bound is understood, we simply call it a witness trail.
\end{definition}
We will use the elementary fact that if there is an alternating \emph{trail} between two vertices from different classes in a bipartite graph, then this trail contains an alternating \emph{path} between the same vertices as well.  Indeed, if a vertex occurs twice on the trail, then its two occurrences have the same parity in the vertex sequence.  The intervening closed subtrail therefore has even length and may be deleted without changing the alternation or the status of the first and last pairs.  Repeating this operation removes all repeated vertices.  In particular, every edge-abundant alternating trail contains an edge-abundant alternating path of no greater length.

The following $\F$-free bipartite version of \cite[Proposition 2.2]{Simple-fully} will be useful.  We include the short proof to verify that no forbidden pair occurs on the resulting trail.
\begin{proposition}\label{pr:find_trail}
Let $U$ and $W$ be disjoint vertex classes, let $\F\subseteq U\times W$ be a matching, and suppose that we have two realizations
\begin{displaymath}
H_0\in \G(C,D,\F) \qquad\text{and} \qquad H_1\in\G(C+e_p,D+e_q,\F).
\end{displaymath}
Then the symmetric difference $E(H_0)\mathbin\triangle E(H_1)$ contains
an edge-abundant alternating trail from $u_p$ to $w_q$, with its pairs
alternating between $E(H_1)\setminus E(H_0)$ and $E(H_0)\setminus E(H_1)$.  In particular, $H_1$ has an edge-abundant alternating path from $u_p$ to $w_q$.
\end{proposition}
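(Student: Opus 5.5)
Consider the symmetric difference $S=E(H_0)\mathbin\triangle E(H_1)$. Colour its pairs red if they lie in $E(H_1)\setminus E(H_0)$ and blue if they lie in $E(H_0)\setminus E(H_1)$. Every pair of $S$ is an edge of $H_0$ or of $H_1$. Since both graphs are $\F$-free, no pair of $S$ belongs to $\F$, so every pair of $S$ is a chord. This is the one point where the forbidden matching enters, and it is immediate.

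Next we count degrees in $S$. For a vertex $v$, let $r(v)$ and $b(v)$ be its red and blue degrees. Then $r(v)-b(v)=d_{H_1}(v)-d_{H_0}(v)$. This difference equals $1$ for $v=u_p$ and for $v=w_q$, and $0$ for every other vertex.

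Now build a maximal trail in $S$ that starts at $u_p$ with a red pair and alternates colours, never reusing a pair of $S$. Suppose the trail has arrived at a vertex $v$ via a red pair and needs a blue one to continue. If $v\neq u_p,w_q$, then balance gives $b(v)=r(v)$. Each earlier pass through $v$ used one red and one blue pair, and the arrival used one more red pair. Hence at most $b(v)-1$ blue pairs at $v$ are used, and a blue pair remains. If $v=u_p$, the trail cannot arrive by a red pair, since it lies in class $U$ and its trail position has even parity; it arrives only through an even-position step, that is, after a blue pair. A symmetric count handles arrival via a blue pair. At $u_p$ the initial red pair is extra, so $r(u_p)=b(u_p)+1$ still leaves a red pair unused on every return.

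The only place the trail can be forced to stop is therefore $w_q$, reached via a red pair. There, having used $k$ blue and $k+1$ red pairs at $w_q$ exhausts $r(w_q)=b(w_q)+1$ only if all blue pairs are also used. Since $S$ is finite, the maximal trail must end at $w_q$, and its last pair is red.

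A bipartite parity check confirms the shape of the trail. Red pairs go $U\to W$ and blue pairs go $W\to U$ along the trail, so the trail has odd length. It starts and ends with red pairs, i.e.\ edges of $H_1$, and is therefore edge-abundant with respect to $H_1$. Its non-edges are blue pairs, which are chords not in $E(H_1)$, hence non-edges of $H_1$.

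Finally, the general trail-to-path reduction recorded before the proposition turns this trail into an edge-abundant alternating path of $H_1$ from $u_p$ to $w_q$. The only delicate step is the bookkeeping that shows the greedy trail cannot get stuck anywhere except at $w_q$, and this is exactly the degree-balance count above.
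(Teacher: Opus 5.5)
Your proof is correct and is essentially the paper's argument. The paper orients red pairs $U\to W$ and blue pairs $W\to U$, adds an auxiliary arc $w_q\to u_p$, and reads the trail off an Euler circuit; you instead prove the existence of that same directed trail directly with the walk-until-stuck degree count, which is the standard proof of the Euler-trail step, and you then use $\F$-freeness of $H_0$ and the trail-to-path reduction exactly as the paper does.
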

\begin{proof}
Construct an auxiliary directed graph on $U\cup W$ as follows.  Orient
every pair in $E(H_1)\setminus E(H_0)$ from $U$ to $W$, and orient every
pair in $E(H_0)\setminus E(H_1)$ from $W$ to $U$.  At every vertex other
than $u_p$ and $w_q$, the in-degree equals the out-degree.  At $u_p$ the
out-degree exceeds the in-degree by one, while at $w_q$ the in-degree
exceeds the out-degree by one.

Add one auxiliary arc from $w_q$ to $u_p$.  The in-degree and out-degree
are now equal at every vertex, so the weak component containing this arc
has an Euler circuit.  Start the circuit with the auxiliary arc and then
delete that arc.  What remains is a directed trail from $u_p$ to $w_q$.
By the orientations, its pairs alternate between
$E(H_1)\setminus E(H_0)$ and $E(H_0)\setminus E(H_1)$, beginning and
ending with a pair in $E(H_1)\setminus E(H_0)$.

Every pair in $E(H_0)\setminus E(H_1)$ is outside $\F$, because $H_0$
is $\F$-free, and it is not an edge of $H_1$.  It is therefore a non-edge of $H_1$.  Thus the trail is edge-abundant and alternating in $H_1$.  The trail-to-path reduction above gives the last assertion.
\end{proof}

\medskip\noindent
The next theorem is the main step in the proof of Theorem~\ref{tm:main}.
\begin{theorem}\label{tm:main-tree}
Let $U=\{u_1,\ldots,u_n\}$ and $W=\{w_1,\ldots,w_m\}$ be disjoint
vertex classes, let $\F\subseteq U\times W$ be a matching, and let
$\fD=\fD(\Sigma;n,\Gamma;m,\Delta;\F)$.  Fix $(C,D;\F)\in \fD$.  Suppose that $G$ is an $\F$-free bipartite graph with
bidegree sequence
\begin{displaymath}
\bd(G)=(C+e_p,D+e_q)
\end{displaymath}
for some $1\le p\le n$ and $1\le q\le m$.  If there is no
edge-abundant alternating path of length at most $9$ from $u_p$ to $w_q$, then
$\fD$ is not fully $\F$-free graphic.
\end{theorem}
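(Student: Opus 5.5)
We argue by contradiction, following the simple-graph approach of \cite[Section 2]{Simple-fully}. Assume that $G$ has bidegree sequence $(C+e_p,D+e_q)$ and that no edge-abundant alternating path of length at most $9$ joins $u_p$ to $w_q$. By the trail-to-path reduction, this also excludes edge-abundant trails of length at most $9$. The aim is to build, from $G$, a triple $(C',D';\F)\in\fD$ that is not $\F$-free graphic.

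The first step is to record the reachability structure around $u_p$ and $w_q$. Let $A_1=N_G(u_p)\subseteq W$ be the set of $W$-vertices reachable from $u_p$ by a single edge. Let $B_1\subseteq U$ be the set of $U$-vertices joined to $A_1$ by non-edges; these are the ends of edge-abundant trails of length $2$. Then let $A_3\subseteq W$ be the ends of edge-abundant trails of length $3$, and define $B_3$, $A_5,\dots$ in the same way. Symmetrically, define $Y_1=N_G(w_q)\subseteq U$, $X_1\subseteq W$, and so on, working backwards from $w_q$. The absence of short witness paths gives strong constraints on these sets:
\begin{itemize}
\item[(i)] $w_q\notin A_1$, so $u_pw_q$ is a non-edge or a forbidden pair.
\item[(ii)] No vertex of $B_1$ is adjacent to $w_q$.
\item[(iii)] Every pair between $B_1$-type sets and $X$-type sets (at distance $\le 9$ in total) is an edge or lies in $\F$.
\item[(iv)] More generally, every pair joining a ``forward'' level to a ``backward'' level with total length at most $9$ must be an edge when the parity requires a non-edge, and a non-edge or forbidden pair when the parity requires an edge.
\end{itemize}
Because $\F$ is a matching, each vertex loses at most one chord to $\F$. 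This costs only an additive error of $1$ per vertex, and it is why the length bound is $9$ rather than a smaller number.

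The second step is the case analysis that turns this structure into a degree obstruction. The sets $S=B_1\cup\{u_p\}$ (forward $U$-levels) and $T=$ the forward $W$-levels together form a dense/sparse block. Vertices of $S$ are adjacent to everything in the backward $W$-levels, and they miss $w_q$ and the vertices reachable back from $w_q$. Counting degrees, the Gale--Ryser inequality \eqref{eq:ZZ} (or Berger's inequality \eqref{eq:Berger} in the paired case) is tight or nearly tight on the set $S$ for the sequence $(C+e_p,D+e_q)$. We then look for a nearby sequence $(C',D')$ obtained from $(C,D)$ by moving a bounded number of units of degree. The units are moved toward $S$, and within $D$ toward the $W$-vertices outside $N(S)$, while staying in $[\gamma_2,\gamma_1]$ and $[\delta_2,\delta_1]$ and keeping the sum $\Sigma$. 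The moves are chosen so that the Gale--Ryser inequality fails for the set $S$. Several cases arise according to whether the levels stabilise early:
\begin{itemize}
\item[(a)] If $B_1$ is empty, then $u_p$ is adjacent to all chords except its neighbours. Its degree is then forced large, and raising $c_p$ to $\gamma_1$ gives the obstruction directly.
\item[(b)] If some level is saturated at the bounds $\gamma_1$ or $\delta_2$, the bound makes that level behave like the extremal sequence of Definition~\ref{def:LEG}.
\item[(c)] Otherwise, a vertex of degree strictly inside the bounds lets us shift degree between levels. Each shift uses the non-existence of a longer witness path, up to length $9$.
\end{itemize}

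I expect this second step to be the main obstacle. One needs a choice of $(C',D')$ that stays inside the basic region while the excess sum created by $e_p,e_q$ is absorbed. The excess must be absorbed without destroying the tight cut, and the forbidden matching $\F$ must be tracked throughout, because it can supply the one missing chord. My first attempt will be to choose $S$ as the set of $U$-vertices of maximal degree reachable from $u_p$, and to use Theorem~\ref{th:DI-LEG} to reduce the final non-graphicality check to a comparison with the canonical LEG.
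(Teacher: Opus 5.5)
\paragraph{There is a genuine gap.} Everything that proves the theorem sits in your second step, and that step is not carried out. You never specify $(C',D')$, never check that it lies in $\fD(\Sigma;n,\Gamma;m,\Delta;\F)$, and never show that it has no $\F$-free realization. Cases (a)--(c) state intentions rather than give arguments.
\begin{itemize}
\item In (a), raising $c_p$ to $\gamma_1$ changes $\Sigma$, and nothing compensates for this.
\item Going through Theorem~\ref{th:DI-LEG} would require proving the LEG itself non-graphic, which has no visible connection to $G$.
\item Certifying non-graphicality via \eqref{eq:ZZ} or \eqref{eq:Berger} is ill-suited here. Those inequalities concern the top $k$ entries of a sorted sequence, and Berger's additionally needs a specially aligned matching. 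Your obstruction lives on an arbitrary vertex set $S$ with an arbitrary matching $\F$.
\end{itemize}
The preliminary structure also needs repair.
\begin{itemize}
\item Item (iii) is reversed. For $b\in B_1$ and $x\in X_1$, the trail $u_p-a\cdots b-x\cdots y-w_q$ has length $5$, so $bx$ must \emph{not} be an edge.
\item The sentence saying $S$ is adjacent to everything in the backward $W$-levels contradicts its own second half.
\item The matching is not why the bound is $9$. The paper needs $9$ even when $\F=\emptyset$: it comes from the number of layers, and $\F$ only costs a $-1$ in degree estimates.
\end{itemize}

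\paragraph{The missing idea.} Bypass degree inequalities and take the bad sequence to be $\bd(G_L)-(e_p,e_q)$, where $G_L$ is obtained from $G$ by hinge-flips.
\begin{itemize}
\item Build layers on each side. $X_1$ is the neighbourhood of $u_p$ (resp.\ $w_q$). $Y_1$ consists of the anti-neighbours of the opposite $X_1$. $X_2$ consists of the neighbours of the opposite $Y_1$ not in $X_1$. $Y_2$ consists of the remaining vertices with a non-edge into the opposite $X_{1,2}$. Call the rest $Z$.
\item The excluded witnesses of lengths $3,5,7,9$, together with these definitions, make $X_{1,2}\times X_{1,2}$, $X_1\times(Y_2\cup Z)$ and $Z\times X_{1,2}$ complete on chords. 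They make $Y_1\times Y_1$, $Y_1\times(Y_2\cup Z)$ and $Y_2\times Y_2$ edgeless.
\item Flip $ab\to ac$ with $a\in Z^U$, $b\in Y^W_2$, $c\in Z^W$ (and symmetrically) until no such flip exists. Then each $Z$ splits into a part $X_3$ that is complete on chords to the opposite $Z$, and a part $R$ with no edges to the opposite $Y_2$.
\item Put $X'=X_1\cup X_2\cup X_3$ and $Y'=Y_1\cup Y_2$. Any alternating trail leaving $u_p$ along an edge of $G_L$ stays inside $X'_W\cup\{u_p\}\cup Y'_U$ and never reaches $w_q$. Applied to $E(G_L)\triangle E(H)$ for a putative realization $H$, this gives non-realizability directly.
\item The region constraint, which you flagged as the main obstacle, comes for free. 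Only vertices of $Y_2\cup Z$ change degree. Each such $u\in U$ is adjacent to every chord into $X^W_1$, so $\deg(u)\ge|X^W_1|-1=c_p\ge\gamma_2$. All possible neighbours of $u$ lie in $X^W_{1,2}\cup Y^W_2\cup Z^W$. That set has size at most $\deg_G(x)\le\gamma_1$ for any $x\in\Gamma_G(w_q)$, because $x$ is adjacent to $w_q$ and to all but at most one vertex of it.
\item The flip process may run for many steps. So your plan of moving only a bounded number of degree units adds a restriction you have no reason to expect can be met.
\end{itemize}
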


Throughout this section $G$ is the $\F$-free bipartite graph from
Theorem~\ref{tm:main-tree}, with vertex classes $U$ and $W$, and
$\C$ and $\aE_G$ are understood as above.  If $A$ is a subset of one
vertex class, then $\Gamma_G(A)$ denotes its neighborhood in the other
class, while
\begin{displaymath}
\Nec_G(A)=\{v:\text{ there is }a\in A\text{ such that }av\in \aE_G\}
\end{displaymath}
denotes its anti-neighborhood through non-edges.

We shall repeatedly subtract the extra unit at the distinguished
vertices $u_p$ and $w_q$.  If $H$ is an $\F$-free bipartite graph with
$\bd(H)=(A,B)$ and $A_p,B_q\ge 1$, then we write
\begin{displaymath}
\bd(H)-(e_p,e_q)=(A-e_p,B-e_q).
\end{displaymath}
Thus, if $\bd(H)=(C'+e_p,D'+e_q)$, then
$\bd(H)-(e_p,e_q)=(C',D')$.

We prove Theorem~\ref{tm:main-tree} by producing a bidegree sequence of
the form $\bd(G')-(e_p,e_q)$ in the region that has no
$\F$-free realization.  The first step is to isolate a
\textbf{blocking configuration} that certifies non-realizability.
Definition~\ref{def:bip-blocking} describes such a partition for a
graph $G'$ with bidegree sequence $(C'+e_p,D'+e_q)$, and
Lemma~\ref{lem:bip-blocking-nonrealizable} shows that
$(C',D')=\bd(G')-(e_p,e_q)$ has no $\F$-free realization.

Thus, to prove Theorem~\ref{tm:main-tree}, it will be enough to find a
graph $G'$ with a bipartite blocking configuration such that
$(C',D')$ still belongs to the prescribed region.  The original graph
$G$ need not have the required partition.  We therefore modify $G$ by
small edge moves that keep $\bd(G')-(e_p,e_q)$ inside the
region.  During this modification, we no longer require the absence of
short edge-abundant alternating paths; the modified graph may contain
such a path of length at most $9$ between $u_p$ and $w_q$.  We stop when
the resulting graph has the partition described in
Definition~\ref{def:bip-blocking}.

\begin{definition}\label{def:bip-blocking}
Let $G'$ be an $\F$-free bipartite graph such that
\begin{displaymath}
\bd(G')=(C'+e_p,D'+e_q).
\end{displaymath}
We say that $G'$ has a \textbf{bipartite blocking configuration}
with respect to $u_p$ and $w_q$ if there are partitions
\begin{displaymath}
U=\{u_p\}\uplus X'_U\uplus Y'_U\uplus R'_U,\qquad
W=\{w_q\}\uplus X'_W\uplus Y'_W\uplus R'_W
\end{displaymath}
such that the following hold:
\begin{enumerate}[(a)]
\item every chord in $X'_U\times X'_W$ is an edge of $G'$;
\item every chord in $(X'_U\times R'_W)\cup(R'_U\times X'_W)$ is an edge of $G'$;
\item every chord in $Y'_U\times Y'_W$ is a non-edge of $G'$;
\item every chord in $(Y'_U\times R'_W)\cup(R'_U\times Y'_W)$ is a non-edge of $G'$;
\item $\Gamma_{G'}(u_p)\subseteq X'_W$ and $\Gamma_{G'}(w_q)\subseteq X'_U$.
\end{enumerate}
\end{definition}

\begin{figure}[H]
\begin{center}
\begin{tikzpicture}[scale=.85]
\tikzstyle{vertex}=[draw,circle,fill=black,minimum size=3,inner sep=0]
\def\uBoxLeft{-.7}
\def\uBoxRight{1.1}
\def\uBoxCenter{.2}
\def\uVertexX{.75}
\def\wBoxLeft{5.9}
\def\wBoxRight{7.7}
\def\wBoxCenter{6.8}
\def\wVertexX{6.25}
\def\xBoxBottom{4.65}
\def\xBoxCenter{5}
\def\xBoxTop{5.35}
\def\yBoxBottom{3.05}
\def\yBoxCenter{3.4}
\def\yBoxTop{3.75}
\def\rBoxBottom{1.45}
\def\rBoxCenter{1.8}
\def\rBoxTop{2.15}
\node at (.2,6.85) {\Large $U$};
\node at (6.8,6.85) {\Large $W$};

\node[vertex] (up) at (-.35,5.95) [label=west:{$u_p$}] {};
\node[vertex] (wq) at (7.25,5.95) [label=east:{$w_q$}] {};

\draw (\uBoxLeft,\xBoxBottom) rectangle (\uBoxRight,\xBoxTop);
\node at (\uBoxCenter,\xBoxCenter) {$X'_U$};
\node[vertex] (ku) at (\uVertexX,\xBoxCenter) {};
\draw (\wBoxLeft,\xBoxBottom) rectangle (\wBoxRight,\xBoxTop);
\node at (\wBoxCenter,\xBoxCenter) {$X'_W$};
\node[vertex] (kw) at (\wVertexX,\xBoxCenter) {};

\draw (\uBoxLeft,\yBoxBottom) rectangle (\uBoxRight,\yBoxTop);
\node at (\uBoxCenter,\yBoxCenter) {$Y'_U$};
\node[vertex] (yu) at (\uVertexX,\yBoxCenter) {};
\draw (\wBoxLeft,\yBoxBottom) rectangle (\wBoxRight,\yBoxTop);
\node at (\wBoxCenter,\yBoxCenter) {$Y'_W$};
\node[vertex] (yw) at (\wVertexX,\yBoxCenter) {};

\draw (\uBoxLeft,\rBoxBottom) rectangle (\uBoxRight,\rBoxTop);
\node at (\uBoxCenter,\rBoxCenter) {$R'_U$};
\node[vertex] (ru) at (\uVertexX,\rBoxCenter) {};
\draw (\wBoxLeft,\rBoxBottom) rectangle (\wBoxRight,\rBoxTop);
\node at (\wBoxCenter,\rBoxCenter) {$R'_W$};
\node[vertex] (rw) at (\wVertexX,\rBoxCenter) {};

\draw (up) -- (kw);
\draw (wq) -- (ku);

\draw[line width=.8mm] (\uBoxRight,\xBoxCenter) -- (\wBoxLeft,\xBoxCenter);
\draw[line width=.8mm] (\uBoxRight,4.75) -- (\wBoxLeft,1.95);
\draw[line width=.8mm] (\uBoxRight,1.95) -- (\wBoxLeft,4.75);

\draw[line width=.8mm,dashed] (\uBoxRight,\yBoxCenter) -- (\wBoxLeft,\yBoxCenter);
\draw[line width=.8mm,dashed] (\uBoxRight,3.15) -- (\wBoxLeft,1.65);
\draw[line width=.8mm,dashed] (\uBoxRight,1.65) -- (\wBoxLeft,3.15);
\end{tikzpicture}
\end{center}
\caption{A bipartite blocking configuration.  Thick solid lines indicate that every chord in the corresponding pair of parts is an edge of $G'$, while thick dashed lines indicate that every such chord is a non-edge.  The only possible neighbors of $u_p$ and $w_q$ lie in $X'_W$ and $X'_U$, respectively.}
\label{fig:bip-blocking}
\end{figure}

\begin{lemma}\label{lem:bip-blocking-nonrealizable}
If $G'$ has a bipartite blocking configuration with respect to
$u_p$ and $w_q$, then the bidegree sequence
\begin{displaymath}
\bd(G')-(e_p,e_q)
\end{displaymath}
has no $\F$-free realization.
\end{lemma}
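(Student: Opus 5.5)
The plan is a weighted degree-counting (cut) argument, in the spirit of the Gale--Ryser inequality of Theorem~\ref{th:ZZ1}. I will find a linear functional of the degrees outside $\{u_p,w_q\}$ whose value is the same for $G'$ and for any hypothetical realization $H$ of $(C',D')$. Then I will show that in $G'$ this functional is strictly larger than anything a realization of $(C',D')$ can achieve. Write $(C',D')=\bd(G')-(e_p,e_q)$, and suppose for contradiction that $H$ is an $\F$-free realization of $(C',D')$.

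Define, for any bipartite graph $K$ on $(U,W)$,
\[
\Psi(K)=\sum_{x\in X'_U\cup X'_W} d_K(x)-\sum_{y\in Y'_U\cup Y'_W} d_K(y).
\]
The first step is to note that $\Psi(H)=\Psi(G')$. The degree sequences of $H$ and $G'$ differ only at $u_p$ and $w_q$, and these vertices lie in none of the four sets.

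The second step is to rewrite $\Psi(K)$ as a sum over edges. An edge $ab$ with $a\in U$, $b\in W$ contributes $[a\in X']+[b\in X']-[a\in Y']-[b\in Y']$. This weight is:
\begin{itemize}
\item $2$ on $X'_U\times X'_W$;
\item $1$ on $(X'_U\times R'_W)\cup(R'_U\times X'_W)$, and on edges joining $u_p$ to $X'_W$ or $w_q$ to $X'_U$;
\item $0$ on $X'\times Y'$, $R'\times R'$, edges from $u_p,w_q$ into $R'$, and the pair $u_pw_q$;
\item negative on every pair meeting $Y'$ and $R'\cup Y'\cup\{u_p,w_q\}$.
\end{itemize}
Let $M$ be twice the number of chords in $X'_U\times X'_W$ plus the number of chords in $(X'_U\times R'_W)\cup(R'_U\times X'_W)$. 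Since $H$ is $\F$-free, only chords can be its edges. Dropping the nonpositive terms therefore gives
\[
\Psi(H)\le M+|\Gamma_H(u_p)\cap X'_W|+|\Gamma_H(w_q)\cap X'_U|\le M+c'_p+d'_q .
\]

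The third step evaluates $\Psi(G')$ exactly using (a)--(e). By (a) and (b), every weight-$2$ and weight-$1$ chord among $X',R'$ is present. By (c) and (d), no negative-weight pair among $Y',R'$ is an edge. By (e), $u_p$ has no neighbor in $Y'_W$ and $w_q$ has no neighbor in $Y'_U$. By (e) also, all $c'_p+1$ edges at $u_p$ go to $X'_W$, and all $d'_q+1$ edges at $w_q$ go to $X'_U$; in particular $u_pw_q\notin E(G')$. Hence
\[
\Psi(G')=M+(c'_p+1)+(d'_q+1).
\]
This contradicts $\Psi(H)=\Psi(G')\le M+c'_p+d'_q$, so no $\F$-free realization exists.

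The only delicate point is the bookkeeping of edge weights, in particular checking that every pair incident to $Y'$ has weight at most $0$ in the bound for $H$. One must also use (e) on both sides, so that the pair $u_pw_q$ and any edges from $u_p$ or $w_q$ into $R'$ or $Y'$ do not spoil the exact value for $G'$.
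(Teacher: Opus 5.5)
Your proof is correct, and it takes a genuinely different route from the paper. The paper orients $E(G')\triangle E(H)$ to extract an alternating trail from $u_p$ to $w_q$ that begins and ends with pairs of $E(G')\setminus E(H)$. It then shows by induction, using (a)--(e), that the trail's $W$-vertices stay in $X'_W$ and its $U$-vertices stay in $\{u_p\}\cup Y'_U$, so the trail can never end at $w_q$. That is an ``augmenting path'' argument; yours is the dual ``cut'' certificate.

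In fact $\Psi$ is the sum of two one-sided inequalities valid for every $\mathcal{F}$-free graph $K$. The first is $\sum_{x\in X'_U}d_K(x)\le|\mathcal{C}\cap(X'_U\times(X'_W\cup R'_W))|+\sum_{y\in Y'_W}d_K(y)+d_K(w_q)$, and the second is its mirror image with the classes exchanged. By (a)--(e) each is tight for $G'$, so each alone is already violated by one for $(C',D')$, and you could halve your bookkeeping. The mirror inequality pits $X'_W$ against $Y'_U\cup\{u_p\}$, which are exactly the vertices to which the paper's trail is confined. So the two proofs are the two sides of max-flow/min-cut duality for the $\mathcal{F}$-free transportation problem.

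Your version is purely arithmetic, avoids the Euler-trail decomposition, and exhibits an explicit Gale--Ryser-type (supply--demand cut) inequality that $(C',D')$ violates. The paper's version reuses the alternating-trail machinery of Proposition~\ref{pr:find_trail} that drives the rest of the section. It also makes transparent why the blocking configuration is shaped as it is, namely as a trap for alternating trails leaving $u_p$.
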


\begin{proof}
Suppose, toward a contradiction, that $H$ is an $\F$-free realization
of $\bd(G')-(e_p,e_q)$.  In the symmetric difference $E(G')\triangle E(H)$, every
vertex other than $u_p$ and $w_q$ is incident with the same number of
edges from $E(G')\setminus E(H)$ and $E(H)\setminus E(G')$.  At each of
$u_p$ and $w_q$, the former number exceeds the latter by one.  To see
that the two exceptional vertices are joined, orient the pairs in
$E(G')\setminus E(H)$ from $U$ to $W$ and those in
$E(H)\setminus E(G')$ from $W$ to $U$.  Every vertex is balanced except
that $u_p$ has one extra outgoing arc and $w_q$ has one extra incoming
arc.  Hence the directed-trail decomposition contains a trail from
$u_p$ to $w_q$.  In the original symmetric difference, this is an
alternating trail whose first and last edges belong to
$E(G')\setminus E(H)$.

Let this trail be
\begin{displaymath}
u_p=a_0,a_1,\ldots,a_{2s},a_{2s+1}=w_q,
\end{displaymath}
where the edges with odd indices belong to $E(G')\setminus E(H)$ and
the edges with even indices belong to $E(H)\setminus E(G')$.

\begin{claim}
Every $a_{2i+1}$ belongs to $X'_W$, and every $a_{2i}$ belongs to $\{u_p\}\cup Y'_U$.
\end{claim}

We prove the claim by induction.  The assertion for $a_1$ follows from
$\Gamma_{G'}(u_p)\subseteq X'_W$.  Suppose that
$a_{2i+1}\in X'_W$.  The pair $a_{2i+1}a_{2i+2}$ belongs to
$E(H)\setminus E(G')$.  Since $H$ is $\F$-free, this pair is a chord;
since it is not an edge of $G'$, it is a non-edge of $G'$.
Conditions (a) and (b) imply that $a_{2i+2}\notin X'_U\cup R'_U$, so
$a_{2i+2}\in\{u_p\}\cup Y'_U$.  Conversely, suppose that
$a_{2i}\in\{u_p\}\cup Y'_U$.  The next pair is an edge of $G'$.
From $u_p$, it leads to $X'_W$ by (e).  From $Y'_U$, it cannot lead to
$Y'_W\cup R'_W$ by (c) and (d), nor can it lead to $w_q$, because
$\Gamma_{G'}(w_q)\subseteq X'_U$.  It must therefore lead to $X'_W$.

In particular, the induction gives $a_{2s+1}\in X'_W$, contradicting
$a_{2s+1}=w_q$ and the partition
\begin{displaymath}
W=\{w_q\}\uplus X'_W\uplus Y'_W\uplus R'_W.
\end{displaymath}
Therefore $\bd(G')-(e_p,e_q)$ has no $\F$-free realization.
\end{proof}

\begin{proof}[Proof of Theorem~\ref{tm:main-tree}]
We work under the hypotheses of the theorem; in particular, there is no
edge-abundant alternating path of length at most $9$ from $u_p$ to
$w_q$ in the original graph $G$.

We now carry out the strategy described before
Definition~\ref{def:bip-blocking}.  Starting from $G$, we construct
another $\F$-free bipartite graph $G_L$ such that
\begin{displaymath}
\bd(G_L)-(e_p,e_q)=(C',D')\in\fD(\Sigma;n,\Gamma;m,\Delta;\F),
\end{displaymath}
and such that $G_L$ has a bipartite blocking configuration with respect to
$u_p$ and $w_q$.  Lemma~\ref{lem:bip-blocking-nonrealizable} will then
show that $(C',D')$ has no $\F$-free realization, proving that
$\fD(\Sigma;n,\Gamma;m,\Delta;\F)$ is not fully $\F$-free graphic.

The notation anticipates the final blocking configuration: at the end
of the construction we will take
\begin{displaymath}
X'_U=X^U_1\cup X^U_2\cup X^U_3,\qquad
Y'_U=Y^U_1\cup Y^U_2,\qquad
R'_U=R^U,
\end{displaymath}
and similarly
\begin{displaymath}
X'_W=X^W_1\cup X^W_2\cup X^W_3,\qquad
Y'_W=Y^W_1\cup Y^W_2,\qquad
R'_W=R^W.
\end{displaymath}

We now begin the actual construction of these sets from the original
graph $G$.
Define
\begin{displaymath}
X^U_1=\Gamma_G(w_q),\qquad X^W_1=\Gamma_G(u_p),
\end{displaymath}
and then record the vertices that can be reached next through a
non-edge:
\begin{displaymath}
Y^U_1=\Nec_G(X^W_1),\qquad Y^W_1=\Nec_G(X^U_1).
\end{displaymath}
Thus every vertex of $Y^U_1$ has a non-edge to some vertex
of $X^W_1$, and symmetrically every vertex of $Y^W_1$ has a
non-edge to some vertex of $X^U_1$.  Next, put
\begin{displaymath}
X^U_2=\Gamma_G(Y^W_1)\setminus X^U_1,\qquad
X^W_2=\Gamma_G(Y^U_1)\setminus X^W_1,
\end{displaymath}
and, for brevity, set
\begin{displaymath}
X^U_{1,2}=X^U_1\cup X^U_2,\qquad X^W_{1,2}=X^W_1\cup X^W_2.
\end{displaymath}
Finally, let
\begin{displaymath}
S^U=U\setminus(\{u_p\}\cup X^U_{1,2}\cup Y^U_1),\qquad
S^W=W\setminus(\{w_q\}\cup X^W_{1,2}\cup Y^W_1).
\end{displaymath}
Within the remaining sets, let
\begin{displaymath}
Y^U_2=S^U\cap\Nec_G(X^W_{1,2}),\qquad
Y^W_2=S^W\cap\Nec_G(X^U_{1,2}),
\end{displaymath}
and put
\begin{displaymath}
Z^U=S^U\setminus Y^U_2,\qquad Z^W=S^W\setminus Y^W_2.
\end{displaymath}

\begin{figure}[t]
\begin{center}
\begin{tikzpicture}[scale=.8]
\tikzstyle{vertex}=[draw,circle,fill=black,minimum size=3,inner sep=0]
\def\uOuterLeft{-.9}
\def\uOuterRight{1.35}
\def\uOuterLabelX{-1.55}
\def\uPartLeft{-.55}
\def\uPartRight{1}
\def\uPartCenter{.2}
\def\uPartVertexX{.75}
\def\uSOuterLeft{-.95}
\def\uSOuterRight{1.35}
\def\uSPartLeft{-.65}
\def\uSPartRight{1.1}
\def\uSPartCenter{.15}
\def\uSVertexX{.85}
\def\wOuterLeft{5.65}
\def\wOuterRight{7.9}
\def\wOuterLabelX{8.55}
\def\wPartLeft{6}
\def\wPartRight{7.55}
\def\wPartCenter{6.8}
\def\wPartVertexX{6.25}
\def\wSOuterLeft{5.65}
\def\wSOuterRight{7.95}
\def\wSPartLeft{5.9}
\def\wSPartRight{7.65}
\def\wSPartCenter{6.75}
\def\wSVertexX{6.2}
\def\xOuterBottom{3.75}
\def\xOuterTop{5.75}
\def\xOuterLabelY{5.00}
\def\xOneBottom{5.05}
\def\xOneTop{5.55}
\def\xOneCenter{5.3}
\def\xTwoBottom{4.05}
\def\xTwoTop{4.55}
\def\xTwoCenter{4.3}
\def\yOneBottom{2.25}
\def\yOneTop{3.05}
\def\yOneCenter{2.65}
\def\sOuterBottom{.2}
\def\sOuterTop{2.1}
\def\sOuterLabelY{1.15}
\def\yTwoBottom{1.3}
\def\yTwoTop{1.95}
\def\yTwoCenter{1.625}
\def\zBottom{.35}
\def\zTop{1}
\def\zCenter{.675}
\def\betweenCenter{3.5}
\def\xEdgeLineY{4.85}
\def\xEdgeLabelY{5.15}
\def\yLowLineY{2.35}
\node at (\uPartCenter,7.25) {\Large $U$};
\node at (\wPartCenter,7.25) {\Large $W$};

\node[vertex] (up) at (-.35,6.25) [label=west:{$u_p$}] {};
\node[vertex] (wq) at (7.25,6.25) [label=east:{$w_q$}] {};

\draw[line width=.6pt] (\uOuterLeft,\xOuterBottom) rectangle (\uOuterRight,\xOuterTop);
\node at (\uOuterLabelX,\xOuterLabelY) {\Large $X^U_{1,2}$};
\draw (\uPartLeft,\xOneBottom) rectangle (\uPartRight,\xOneTop);
\node at (\uPartCenter,\xOneCenter) {$X^U_1$};
\node[vertex] (xu) at (\uPartVertexX,\xOneCenter) {};
\draw (\uPartLeft,\xTwoBottom) rectangle (\uPartRight,\xTwoTop);
\node at (\uPartCenter,\xTwoCenter) {$X^U_2$};
\node[vertex] (zu) at (\uPartVertexX,\xTwoCenter) {};

\draw[line width=.6pt] (\wOuterLeft,\xOuterBottom) rectangle (\wOuterRight,\xOuterTop);
\node at (\wOuterLabelX,\xOuterLabelY) {\Large $X^W_{1,2}$};
\draw (\wPartLeft,\xOneBottom) rectangle (\wPartRight,\xOneTop);
\node at (\wPartCenter,\xOneCenter) {$X^W_1$};
\node[vertex] (xw) at (\wPartVertexX,\xOneCenter) {};
\draw (\wPartLeft,\xTwoBottom) rectangle (\wPartRight,\xTwoTop);
\node at (\wPartCenter,\xTwoCenter) {$X^W_2$};
\node[vertex] (zw) at (\wPartVertexX,\xTwoCenter) {};

\draw (\uPartLeft,\yOneBottom) rectangle (\uPartRight,\yOneTop);
\node at (\uPartCenter,\yOneCenter) {$Y^U_1$};
\node[vertex] (yu) at (\uPartVertexX,\yOneCenter) {};
\draw (\wPartLeft,\yOneBottom) rectangle (\wPartRight,\yOneTop);
\node at (\wPartCenter,\yOneCenter) {$Y^W_1$};
\node[vertex] (yw) at (\wPartVertexX,\yOneCenter) {};

\draw[line width=.6pt] (\uSOuterLeft,\sOuterBottom) rectangle (\uSOuterRight,\sOuterTop);
\node at (\uOuterLabelX,\sOuterLabelY) {\Large $S^U$};
\draw (\uSPartLeft,\yTwoBottom) rectangle (\uSPartRight,\yTwoTop);
\node at (\uSPartCenter,\yTwoCenter) {\small $Y^U_2$};
\node[vertex] (ruN) at (\uSVertexX,\yTwoCenter) {};
\draw (\uSPartLeft,\zBottom) rectangle (\uSPartRight,\zTop);
\node at (\uSPartCenter,\zCenter) {\small $Z^U$};
\node[vertex] (ruE) at (\uSVertexX,\zCenter) {};
\draw[line width=.6pt] (\wSOuterLeft,\sOuterBottom) rectangle (\wSOuterRight,\sOuterTop);
\node at (\wOuterLabelX,\sOuterLabelY) {\Large $S^W$};
\draw (\wSPartLeft,\yTwoBottom) rectangle (\wSPartRight,\yTwoTop);
\node at (\wSPartCenter,\yTwoCenter) {\small $Y^W_2$};
\node[vertex] (rwN) at (\wSVertexX,\yTwoCenter) {};
\draw (\wSPartLeft,\zBottom) rectangle (\wSPartRight,\zTop);
\node at (\wSPartCenter,\zCenter) {\small $Z^W$};
\node[vertex] (rwE) at (\wSVertexX,\zCenter) {};

\draw (up) -- (xw);
\draw (wq) -- (xu);
\draw[dashed] (xw) -- (yu);
\draw[dashed] (xu) -- (yw);
\draw (yu) -- (zw);
\draw (yw) -- (zu);

\draw[line width=.8mm] (\uOuterRight,\xEdgeLineY) -- (\wOuterLeft,\xEdgeLineY);
\draw[line width=.8mm,dashed] (\uPartRight,\yOneCenter) -- (\wPartLeft,\yOneCenter);
\draw[line width=.8mm,dashed] (\uPartRight,\yLowLineY) -- (\wSOuterLeft,\sOuterLabelY);
\draw[line width=.8mm,dashed] (\uSOuterRight,\sOuterLabelY) -- (\wPartLeft,\yLowLineY);
\draw[line width=.8mm,dashed] (\uSPartRight,\yTwoCenter) -- (\wSPartLeft,\yTwoCenter);
\end{tikzpicture}
\end{center}
\caption{The partition of the bipartite graph around $u_p$ and $w_q$.
Solid lines indicate edges, and dashed lines indicate non-edges; thick lines indicate that all corresponding chords have the same status.}
\label{fig:bipartite-layers}
\end{figure}

We first prove three consequences of the absence of short witnesses.

\smallskip
\begin{claim}\label{claim1}
There are no edges of $G$ in
\begin{displaymath}
Y^U_1\times Y^W_1,\qquad Y^U_1\times S^W,\qquad S^U\times Y^W_1.
\end{displaymath}
\end{claim}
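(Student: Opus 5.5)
Each of the three assertions will be proved by contradiction: an edge of $G$ in one of the listed sets yields an edge-abundant alternating trail of length at most $9$ from $u_p$ to $w_q$. Such a trail always contains an edge-abundant alternating path of no greater length, by the trail-to-path reduction recorded before Proposition~\ref{pr:find_trail}. This contradicts the hypothesis of Theorem~\ref{tm:main-tree}. So the plan is just to write down, from the definitions of the layers, the short trail that each forbidden edge would close.

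\emph{First case}, $Y^U_1\times Y^W_1$. Suppose $yz\in E_G$ with $y\in Y^U_1$ and $z\in Y^W_1$. By definition of $Y^U_1=\Nec_G(X^W_1)$ there is $x\in X^W_1$ with $yx\in\aE_G$. Since $X^W_1=\Gamma_G(u_p)$, we also have $u_px\in E_G$. Symmetrically there is $x'\in X^U_1=\Gamma_G(w_q)$ with $x'z\in\aE_G$ and $x'w_q\in E_G$. Then
\[
u_p,\ x,\ y,\ z,\ x',\ w_q
\]
is an alternating trail edge, non-edge, edge, non-edge, edge. It is edge-abundant and has length $5\le 9$, a contradiction. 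The only point to check is that the non-edges used are genuine chords. This is automatic, because $\Nec_G$ was defined through $\aE_G$, which excludes $\F$.

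\emph{Second case}, $Y^U_1\times S^W$; the case $S^U\times Y^W_1$ is symmetric. Suppose $yz\in E_G$ with $y\in Y^U_1$ and $z\in S^W$. Then $z\in\Gamma_G(Y^U_1)$, so either $z\in X^W_1$ or $z\in X^W_2$. Both are excluded by the definition of $S^W$, which removes $w_q$, $X^W_{1,2}$ and $Y^W_1$ from $W$. Hence this case is immediately contradictory, and no trail length is needed. The one subtlety is $z=w_q$, which is excluded from $S^W$ anyway. In the symmetric case, an edge from $S^U$ into $Y^W_1$ puts the $U$-endpoint in $\Gamma_G(Y^W_1)\subseteq X^U_1\cup X^U_2$, contradicting the definition of $S^U$. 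In the first case, one must also note that $yz$ might coincide with some pair already used. This is harmless: a trail only needs distinct pairs, and the pairs $yx$ and $x'z$ are non-edges while $yz$ is an edge.

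I expect no real obstacle. The only care needed is bookkeeping: confirming that every non-edge step is a chord, not an $\F$-pair, and that the definitions of $S^U$ and $S^W$ subtract exactly the neighbourhoods $\Gamma_G(Y^W_1)$ and $\Gamma_G(Y^U_1)$ (up to the $X_1$ parts). The longer trails of length up to $9$ are evidently reserved for the later claims involving $Y_2$ and $X_2$.
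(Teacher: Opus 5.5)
Your proof is correct and follows the paper's argument: the alternating sequence $u_p,x,y,z,x',w_q$ of length $5$ handles $Y^U_1\times Y^W_1$, and for the other two sets you use that $\Gamma_G(Y^U_1)\subseteq X^W_1\cup X^W_2$ (respectively $\Gamma_G(Y^W_1)\subseteq X^U_1\cup X^U_2$) is disjoint from $S^W$ (respectively $S^U$). The only difference is that the paper calls the length-$5$ sequence a path outright, while you treat it as a trail and invoke the trail-to-path reduction, which spares you from checking that its vertices are distinct.
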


\begin{proof}
Suppose first that $y^U_1y^W_1$ is an edge of $G$, where
$y^U_1\in Y^U_1$ and $y^W_1\in Y^W_1$.  By the definitions of $Y^U_1$
and $Y^W_1$, choose $x^W_1\in X^W_1$ and $x^U_1\in X^U_1$ such that
$x^W_1y^U_1,x^U_1y^W_1\in\aE_G$.  Since $u_px^W_1$ and $x^U_1w_q$ are
edges of $G$, the path
\begin{displaymath}
u_p-x^W_1\cdots y^U_1-y^W_1\cdots x^U_1-w_q
\end{displaymath}
is an edge-abundant alternating path of length $5$, a contradiction.
Next, if $y^U_1w$ were an edge with $y^U_1\in Y^U_1$ and $w\in S^W$,
then the definition of $X^W_2$ would imply $w\in X^W_2$, contradicting
the definition of $S^W$.  The proof for $S^U\times Y^W_1$ is symmetric.
\end{proof}

\smallskip
\begin{claim}\label{claim2}
Every chord in
\begin{displaymath}
X^U_{1,2}\times X^W_{1,2},\qquad X^U_1\times S^W,\qquad S^U\times X^W_1
\end{displaymath}
is an edge of $G$.
\end{claim}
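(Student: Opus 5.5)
Each of the three families is handled the same way: assume a chord in it is a non-edge of $G$, and build from this non-edge an edge-abundant alternating path from $u_p$ to $w_q$ of length at most $9$, which the hypothesis of Theorem~\ref{tm:main-tree} forbids. It is enough to produce an alternating trail, since the trail-to-path reduction described after Definition~\ref{df:spec-trails} turns an edge-abundant alternating trail into an edge-abundant alternating path of no greater length. So we never need to check that the vertices we use are distinct.

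First take $x^U\in X^U_{1,2}$ and $x^W\in X^W_{1,2}$ with $x^Ux^W\in\aE_G$. By construction, $x^W$ is reached from $u_p$ by an edge-abundant alternating trail of length $1$ if $x^W\in X^W_1$, namely the edge $u_px^W$. If $x^W\in X^W_2$, the trail has length $3$: we have $u_p-x^W_1\cdots y^U_1-x^W$, where $x^W_1\in X^W_1$ is chosen with $x^W_1y^U_1\in\aE_G$ for some $y^U_1\in Y^U_1$ for which $y^U_1x^W\in E_G$. Symmetrically, $w_q$ is reached from $x^U$ by a trail of length $1$ or $3$ that ends with an edge. Concatenating the trail from $u_p$ to $x^W$, the non-edge $x^W x^U$, and the trail from $x^U$ to $w_q$ gives an alternating trail that starts and ends with edges. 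Its length is at most $3+1+3=7\le 9$, a contradiction.

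Next take $x^U_1\in X^U_1$ and $s\in S^W$ with $x^U_1s\in\aE_G$. By the definition of $Y^W_1=\Nec_G(X^U_1)$ we then have $s\in Y^W_1$, which contradicts $s\in S^W$, because $S^W$ excludes $Y^W_1$. So this case needs no path at all, only the definitions. The case $S^U\times X^W_1$ is symmetric, using $Y^U_1=\Nec_G(X^W_1)$.

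The only point needing care is the first case. There we must check that the pieces alternate correctly at the junctions: the trail to $x^W$ ends with an edge, the connecting pair is a non-edge, and the trail from $x^U$ starts with an edge. We must also check that every non-edge used is a chord, i.e. not in $\F$. Both hold by construction, since $\Nec_G$ and the assumed non-edge are defined only through $\aE_G\subseteq\C$.
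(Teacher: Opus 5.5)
Your proof is correct and uses the same ingredients as the paper. A hypothetical non-edge is converted into a short edge-abundant alternating walk from $u_p$ to $w_q$. The $S$-blocks are excluded purely by the definitions ($x^U_1s\in\aE_G$ forces $s\in\Nec_G(X^U_1)=Y^W_1$, which $S^W$ omits), exactly as in the paper.

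The difference lies in how you treat $X^U_{1,2}\times X^W_{1,2}$. The paper splits it into four blocks:
\begin{itemize}
\item $X^U_1\times X^W_1$ is handled by a length-$3$ path;
\item $X^U_2\times X^W_2$ is handled by a length-$7$ path;
\item in the mixed blocks, the endpoint in $X^W_2$ (resp.\ $X^U_2$) would lie in $Y^W_1$ (resp.\ $Y^U_1$) while having a neighbor in $Y^U_1$ (resp.\ $Y^W_1$), contradicting Claim~\ref{claim1}.
\end{itemize}
Your concatenation (reach $x^W$ from $u_p$ in at most $3$ steps, cross the non-edge, reach $w_q$ from $x^U$ in at most $3$ steps) handles all four blocks at once and does not need Claim~\ref{claim1}. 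In the mixed case it simply writes down the length-$5$ walk underlying the proof of Claim~\ref{claim1}.

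Your explicit use of the trail-to-path reduction is also more careful than the paper. The paper calls its length-$7$ sequence a path without checking that its vertices are distinct; the disjointness of $X^U_1,Y^U_1,X^U_2$ is only recorded after the claims.

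One small correction: your concatenation need not be a trail either. A priori one could have $x^U=u_p$ and $y^W_1=x^W_1$, so that the pair $u_px^W_1$ is traversed twice. This is harmless, because the parity argument behind the reduction never uses that the pairs are distinct, so it applies verbatim to alternating walks. You should, however, say ``walk'' rather than ``trail''.
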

\begin{proof}
We prove this by excluding non-edges.
First, if $x^U_1x^W_1\in\aE_G$ with $x^U_1\in X^U_1$ and
$x^W_1\in X^W_1$, then
\begin{displaymath}
u_p-x^W_1\cdots x^U_1-w_q
\end{displaymath}
is an edge-abundant alternating path of length $3$, a contradiction.
Next, suppose that $x^U_1w\in\aE_G$ with $x^U_1\in X^U_1$ and
$w\in X^W_2\cup S^W$.  Then $w\in Y^W_1$ by the definition of
$Y^W_1$.  This is impossible if $w\in S^W$.  If $w\in X^W_2$, then
some $y^U_1\in Y^U_1$ satisfies $y^U_1w\in E(G)$, contradicting
Claim~\ref{claim1}.  The symmetric argument excludes
non-edges in $(X^U_2\cup S^U)\times X^W_1$.
Finally, if $x^U_2x^W_2\in\aE_G$, choose $y^U_1\in Y^U_1$ and
$y^W_1\in Y^W_1$ such that $y^U_1x^W_2,x^U_2y^W_1\in E(G)$, and then
choose $x^W_1\in X^W_1$ and $x^U_1\in X^U_1$ such that
$x^W_1y^U_1,x^U_1y^W_1\in\aE_G$.  Then
\begin{displaymath}
u_p-x^W_1\cdots y^U_1-x^W_2\cdots x^U_2-y^W_1\cdots x^U_1-w_q
\end{displaymath}
is an edge-abundant alternating path of length $7$, a contradiction.
\end{proof}

\smallskip
\begin{claim}\label{claim3}
There are no edges of $G$ in $Y^U_2\times Y^W_2$, and every chord in
\begin{displaymath}
(Z^U\times X^W_{1,2})\cup(X^U_{1,2}\times Z^W)
\end{displaymath}
is an edge of $G$.
\end{claim}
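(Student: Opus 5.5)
We argue Claim~\ref{claim3} in the same way as Claims~\ref{claim1} and~\ref{claim2}: each forbidden configuration will be turned into an edge-abundant alternating path from $u_p$ to $w_q$ of length at most $9$, contradicting the hypothesis of Theorem~\ref{tm:main-tree}. The only new input is that vertices of $Y^U_2$ and $Y^W_2$ are reached from $X^W_{1,2}$ and $X^U_{1,2}$ by a non-edge. We already know from Claim~\ref{claim2} how to reach these sets from $u_p$ and $w_q$ by short edge-abundant alternating paths.

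\emph{First assertion.} Suppose $y^U_2y^W_2\in E(G)$ with $y^U_2\in Y^U_2$ and $y^W_2\in Y^W_2$. By definition we can choose $x^W\in X^W_{1,2}$ and $x^U\in X^U_{1,2}$ with $x^Wy^U_2,\ x^Uy^W_2\in\aE_G$.

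We need an alternating path from $u_p$ ending with an edge into $x^W$. If $x^W\in X^W_1$, the single edge $u_px^W$ suffices (length $1$). If $x^W\in X^W_2$, we use $u_p-x^W_1\cdots y^U_1-x^W$ (length $3$), exactly as in the proof of Claim~\ref{claim2}. Symmetrically, there is an alternating path from $x^U$ to $w_q$ of length $1$ or $3$ that starts and ends with an edge.

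Concatenating these gives
\[
u_p\leadsto x^W\cdots y^U_2-y^W_2\cdots x^U\leadsto w_q,
\]
an edge-abundant alternating trail of length at most $3+3+3=9$. By the trail-to-path reduction it contains an edge-abundant alternating path of length at most $9$, a contradiction. This case is the worst one and is exactly where the bound $9$ is needed.

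\emph{Second assertion.} Suppose $z\,x^W\in\aE_G$ with $z\in Z^U$ and $x^W\in X^W_{1,2}$. Then $z\in S^U\cap\Nec_G(X^W_{1,2})=Y^U_2$, contradicting $z\in Z^U=S^U\setminus Y^U_2$. The case $X^U_{1,2}\times Z^W$ is symmetric. This part is immediate from the definitions and needs no path.

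\emph{Main point of care.} The only delicate step is making sure the concatenated walk in the first assertion really alternates and uses only chords. The pieces coming from Claims~\ref{claim1} and~\ref{claim2} alternate by construction. Every non-edge involved lies in $\aE_G\subseteq\C$, so no pair of $\F$ is used. The vertices may repeat, which is why we pass through a trail and then apply the reduction to a path. Because the two classes are bipartite, the parity of the alternation is preserved at each junction.
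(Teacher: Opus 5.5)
Your proof is correct and follows the paper's argument. The second assertion is read off from the definitions of $Y^U_2,Y^W_2$, and the first builds the same edge-abundant path of length at most $9$ through $X^W_1,Y^U_1,X^W_2,Y^U_2,Y^W_2,X^U_2,Y^W_1,X^U_1$. The only difference is that the paper uses Claim~\ref{claim2} (chords in $S^U\times X^W_1$ and $X^U_1\times S^W$ are edges) to exclude $x^W\in X^W_1$ and $x^U\in X^U_1$, whereas you treat those cases with a shorter trail, which is equally valid.
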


\begin{proof}
To prove the first assertion, suppose that $y^U_2y^W_2$ is an edge of
$G$, where $y^U_2\in Y^U_2$ and $y^W_2\in Y^W_2$.  By the definition
of $Y^U_2$, there is an $x^W\in X^W_{1,2}$ such that
$y^U_2x^W\in\aE_G$.  Claim~\ref{claim2} rules out $x^W\in X^W_1$, so
$x^W\in X^W_2$; write $x^W=x^W_2$.  Similarly, there is an
$x^U_2\in X^U_2$ such that $x^U_2y^W_2\in\aE_G$.  Choose
$y^U_1,y^W_1,x^W_1,x^U_1$ as in the proof of Claim~\ref{claim2}.  Then
\begin{displaymath}
u_p-x^W_1\cdots y^U_1-x^W_2\cdots y^U_2-y^W_2\cdots x^U_2-y^W_1\cdots x^U_1-w_q
\end{displaymath}
is an edge-abundant alternating path of length $9$, a contradiction.
For the second assertion, if $z^U\in Z^U$ had a non-edge to
some vertex of $X^W_{1,2}$, then $z^U$ would belong to
$Y^U_2=S^U\cap\Nec_G(X^W_{1,2})$, contradicting
$Z^U=S^U\setminus Y^U_2$.  Thus every chord in
$Z^U\times X^W_{1,2}$ is an edge of $G$.  The proof for
$X^U_{1,2}\times Z^W$ is symmetric.
\end{proof}

We next verify that the sets defined so far are disjoint as required.
The intersections $X^U_1\cap X^U_2$ and
$X^W_1\cap X^W_2$ are empty by definition.  If
$x\in X^U_1\cap Y^U_1$, then $x$ has a non-edge to some
vertex of $X^W_1$, contrary to Claim~\ref{claim2}; the same argument gives
$X^W_1\cap Y^W_1=\emptyset$.  If $x\in Y^U_1\cap X^U_2$, then $x$ has
a neighbor in $Y^W_1$, contrary to Claim~\ref{claim1}; again the $W$-side is
symmetric.  Moreover, $u_p$ belongs to none of
$X^U_1,Y^U_1,X^U_2$.  Membership in $X^U_1$ would mean that $u_pw_q$
is an edge.  Membership in $Y^U_1$ is impossible because every pair
from $u_p$ to $X^W_1=\Gamma_G(u_p)$ is an edge, while membership in
$X^U_2$ would force a vertex in $X^W_1\cap Y^W_1$.  The argument for $w_q$ is
symmetric.  Thus $X^U_1,Y^U_1,X^U_2$ are pairwise disjoint, and so are
$X^W_1,Y^W_1,X^W_2$.  Together with the definitions of $S^U$ and
$S^W$, this gives the partitions
\begin{displaymath}
U=\{u_p\}\uplus X^U_1\uplus Y^U_1\uplus X^U_2\uplus S^U,\qquad
W=\{w_q\}\uplus X^W_1\uplus Y^W_1\uplus X^W_2\uplus S^W.
\end{displaymath}

We now begin to modify $G$.  The sequence $\bd(G_L)-(e_p,e_q)$ of the final
graph will lie in the prescribed simple region.  The intermediate graphs,
however, need not satisfy the hypothesis excluding short edge-abundant
alternating paths from $u_p$ to $w_q$.

We adapt the ``downward twist'' construction from the proof of
Theorem~2.3 in~\cite{Simple-fully}.  The sets defined above remain
fixed throughout the construction.  Put $G_0=G$.  Suppose that
$G_\ell$ has been constructed.  If there exist vertices satisfying
\begin{displaymath}
a_\ell\in Z^U,\quad b_\ell\in Y^W_2,\quad c_\ell\in Z^W,\qquad
a_\ell b_\ell\in E(G_\ell),\quad a_\ell c_\ell\in \aE_{G_\ell},
\end{displaymath}
then perform the hinge-flip
\begin{displaymath}
a_\ell b_\ell\Rightarrow a_\ell c_\ell
\end{displaymath}
and set
\begin{displaymath}
G_{\ell+1}=G_\ell-a_\ell b_\ell+a_\ell c_\ell.
\end{displaymath}
We also allow the symmetric move obtained by interchanging $U$ and $W$.
Either operation is called an admissible downward twist.  Repeat this
step for as long as an admissible twist exists.

\begin{figure}[H]
\begin{center}
\begin{tikzpicture}[scale=.85]
\tikzstyle{vertex}=[draw,circle,fill=black,minimum size=3,inner sep=0]
\def\beforeZULeft{-.75}
\def\beforeZURight{1.45}
\def\beforeZUCenter{.35}
\def\beforeZUVertexX{1.1}
\def\beforeZULabelX{.65}
\def\beforeSideLeft{2.55}
\def\beforeSideRight{4.75}
\def\beforeSideCenter{3.65}
\def\beforeSideVertexX{2.9}
\def\beforeSideLabelX{3.3}
\def\afterZULeft{5.75}
\def\afterZURight{7.95}
\def\afterZUCenter{6.85}
\def\afterZUVertexX{7.6}
\def\afterZULabelX{7.15}
\def\afterSideLeft{9.05}
\def\afterSideRight{11.25}
\def\afterSideCenter{10.15}
\def\afterSideVertexX{9.4}
\def\afterSideLabelX{9.8}
\def\zuBottom{1.25}
\def\zuTop{2.55}
\def\zuLabelY{2.32}
\def\zuVertexY{1.65}
\def\zuTextY{1.55}
\def\ywBottom{3}
\def\ywTop{4.3}
\def\ywLabelY{4.05}
\def\ywVertexY{3.35}
\def\ywTextY{3.25}
\def\zwBottom{-.05}
\def\zwTop{1.25}
\def\zwLabelY{1}
\def\zwVertexY{.35}
\def\zwTextY{.25}

\node at (1.8,4.55) {$G_\ell$};
\node at (8,4.55) {$G_{\ell+1}$};
\draw[loosely dotted,very thick,->] (4.85,2.1) -- (5.45,2.1);

\draw (\beforeZULeft,\zuBottom) rectangle (\beforeZURight,\zuTop);
\node at (\beforeZUCenter,\zuLabelY) {$Z^U$};
\node[vertex] (a0) at (\beforeZUVertexX,\zuVertexY) {};
\node at (\beforeZULabelX,\zuTextY) {$a_\ell$};
\draw (\beforeSideLeft,\ywBottom) rectangle (\beforeSideRight,\ywTop);
\node at (\beforeSideCenter,\ywLabelY) {$Y^W_2$};
\node[vertex] (b0) at (\beforeSideVertexX,\ywVertexY) {};
\node at (\beforeSideLabelX,\ywTextY) {$b_\ell$};
\draw (\beforeSideLeft,\zwBottom) rectangle (\beforeSideRight,\zwTop);
\node at (\beforeSideCenter,\zwLabelY) {$Z^W$};
\node[vertex] (c0) at (\beforeSideVertexX,\zwVertexY) {};
\node at (\beforeSideLabelX,\zwTextY) {$c_\ell$};
\draw (a0) -- (b0);
\draw[dashed] (a0) -- (c0);
\draw[loosely dotted,very thick,->] (\beforeSideCenter,2.85) -- (\beforeSideCenter,1.45);

\draw (\afterZULeft,\zuBottom) rectangle (\afterZURight,\zuTop);
\node at (\afterZUCenter,\zuLabelY) {$Z^U$};
\node[vertex] (a1) at (\afterZUVertexX,\zuVertexY) {};
\node at (\afterZULabelX,\zuTextY) {$a_\ell$};
\draw (\afterSideLeft,\ywBottom) rectangle (\afterSideRight,\ywTop);
\node at (\afterSideCenter,\ywLabelY) {$Y^W_2$};
\node[vertex] (b1) at (\afterSideVertexX,\ywVertexY) {};
\node at (\afterSideLabelX,\ywTextY) {$b_\ell$};
\draw (\afterSideLeft,\zwBottom) rectangle (\afterSideRight,\zwTop);
\node at (\afterSideCenter,\zwLabelY) {$Z^W$};
\node[vertex] (c1) at (\afterSideVertexX,\zwVertexY) {};
\node at (\afterSideLabelX,\zwTextY) {$c_\ell$};
\draw[dashed] (a1) -- (b1);
\draw (a1) -- (c1);
\end{tikzpicture}
\end{center}
\caption{One downward twist.  In $G_\ell$ the pair $a_\ell b_\ell$ is an edge and $a_\ell c_\ell$ is a non-edge; after the hinge-flip, the edge is moved from $Y^W_2$ to $Z^W$.  The symmetric operation is obtained by interchanging the two vertex classes.}
\label{fig:downward-twist}
\end{figure}

Each admissible downward twist removes an edge and adds a
non-edge, so the resulting graph is again simple and bipartite.  The added
pair is a chord; hence every $G_\ell$ remains $\F$-free.  The process
terminates because each twist adds one edge between $Z^U$ and $Z^W$,
while it removes an edge from either $Z^U\times Y^W_2$ or
$Y^U_2\times Z^W$; thus no edge of $Z^U\times Z^W$ is ever removed.
Let $G_L$ be the final graph and
write $\bd(G_L)=(C_L,D_L)$.
In the notation introduced above,
\begin{displaymath}
\bd(G_L)-(e_p,e_q)=(C_L-e_p,D_L-e_q).
\end{displaymath}

\begin{claim}\label{claim:twist-preservation}
For every $0\le \ell\le L$, all incidences asserted in
Claims~\ref{claim1}--\ref{claim3}, both edges and non-edges,
remain valid with $G_\ell$ in place of $G$.  Moreover,
\begin{displaymath}
\Gamma_{G_\ell}(u_p)=X^W_1,
\qquad
\Gamma_{G_\ell}(w_q)=X^U_1.
\end{displaymath}
\end{claim}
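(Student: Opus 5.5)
We prove Claim~\ref{claim:twist-preservation} by induction on $\ell$. For $\ell=0$ everything holds by Claims~\ref{claim1}--\ref{claim3} and by the definitions $X^U_1=\Gamma_G(w_q)$ and $X^W_1=\Gamma_G(u_p)$. The sets themselves never change. In passing from $G_\ell$ to $G_{\ell+1}$, exactly two pairs change status, so it suffices to locate them.

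\emph{The two changed pairs.} In the move $a_\ell b_\ell\Rightarrow a_\ell c_\ell$ we have $a_\ell\in Z^U$, $b_\ell\in Y^W_2$ and $c_\ell\in Z^W$. The removed pair $a_\ell b_\ell$ lies in $Z^U\times Y^W_2\subseteq S^U\times S^W$. The added pair $a_\ell c_\ell$ lies in $Z^U\times Z^W\subseteq S^U\times S^W$. The symmetric move changes only pairs in $Y^U_2\times Z^W$ and $Z^U\times Z^W$, again inside $S^U\times S^W$.

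\emph{The asserted incidences.} Now we check each incidence from Claims~\ref{claim1}--\ref{claim3} against $S^U\times S^W$.
\begin{itemize}
\item Claim~\ref{claim1} concerns $Y^U_1\times Y^W_1$, $Y^U_1\times S^W$ and $S^U\times Y^W_1$. Each of these involves $Y^U_1$ or $Y^W_1$, and both are disjoint from $S^U$ and $S^W$ by the partition established above.
\item Claim~\ref{claim2} concerns pairs with at least one endpoint in $X^U_{1,2}\cup X^W_{1,2}$. Such pairs are not in $S^U\times S^W$.
\item In Claim~\ref{claim3}, the pairs in $(Z^U\times X^W_{1,2})\cup(X^U_{1,2}\times Z^W)$ are likewise untouched.
\item The only delicate case is the assertion that $Y^U_2\times Y^W_2$ contains no edges, since $Y^U_2\times Y^W_2\subseteq S^U\times S^W$. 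Here a move only ever \emph{adds} a pair in $Z^U\times Z^W$, and $Z^U\cap Y^U_2=\emptyset$. Hence no edge is ever created in $Y^U_2\times Y^W_2$; removals there are harmless.
\end{itemize}
The non-edge statements remain valid for the same reason: an added edge always lies in $Z^U\times Z^W$, which is not among the pair sets declared to consist of non-edges.

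\emph{The neighbourhoods of $u_p$ and $w_q$.} The vertex $u_p$ lies in none of $Z^U$, $Y^U_2$ or $X^U_{1,2}$, by the partition. So no pair at $u_p$ changes, and $\Gamma_{G_{\ell+1}}(u_p)=\Gamma_{G_\ell}(u_p)=X^W_1$. The same argument gives $\Gamma_{G_{\ell+1}}(w_q)=X^U_1$. The moves also preserve $\F$-freeness, since the added pair is a chord, although this is not needed for the claim.

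The main point needing care is this bookkeeping that every modified pair lies in $S^U\times S^W$ and that no added pair lands in $Y^U_2\times Y^W_2$. It follows directly from $Z^U\cap Y^U_2=\emptyset$ and $Z^W\cap Y^W_2=\emptyset$, so the induction goes through.
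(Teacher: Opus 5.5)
Your proof is correct and is essentially the paper's argument. The paper likewise notes that each twist changes only pairs in $(Z^U\times Y^W_2)\cup(Y^U_2\times Z^W)\cup(Z^U\times Z^W)$, that none of these pairs lies in a class fixed by Claims~\ref{claim1}--\ref{claim3}, and that no twist touches $u_p$ or $w_q$. Your induction and the containment in $S^U\times S^W$ just spell this out; also, $Y^U_2\times Y^W_2$ is not really a delicate case, since no pair there is ever added or removed ($Z^U\cap Y^U_2=Z^W\cap Y^W_2=\emptyset$).
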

\begin{proof}
The sets in the original partition remain fixed throughout the
construction.  Relative to $G$, a downward twist can change only pairs in
\begin{displaymath}
(Z^U\times Y^W_2)\cup(Y^U_2\times Z^W)\cup(Z^U\times Z^W).
\end{displaymath}
It can only delete edges in the first two sets and add edges in the
third.  None of these pairs belongs to a class specified as edges or
non-edges in Claims~\ref{claim1}--\ref{claim3}.  Thus every incidence
from those claims is preserved.  Finally, no twist is incident with
$u_p$ or $w_q$, so their neighborhoods also remain unchanged.
\end{proof}

\begin{claim}\label{claim4}
\emph{The sequence $\bd(G_L)-(e_p,e_q)$ belongs to the prescribed simple region:}
\begin{equation}\label{eq:expanded-stays-in-region}
(C_L-e_p,D_L-e_q)\in\fD(\Sigma;n,\Gamma;m,\Delta;\F).
\end{equation}
\end{claim}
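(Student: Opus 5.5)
The plan is to track how a single admissible downward twist changes the bidegree sequence, and to show that every intermediate sequence $\bd(G_\ell)-(e_p,e_q)$ satisfies the four defining constraints of $\fD(\Sigma;n,\Gamma;m,\Delta;\F)$. Induction on $\ell$ then gives the claim for $\ell=L$. The base case $\ell=0$ is the hypothesis $\bd(G)-(e_p,e_q)=(C,D)\in\fD$.

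\emph{Easy constraints.} A hinge-flip neither creates nor deletes edges, so $\sum C_L=\sum D_L=\Sigma+1$. Hence $(C_L-e_p,D_L-e_q)$ has common sum $\Sigma$. Every $G_\ell$ is $\F$-free, as noted before Claim~\ref{claim:twist-preservation}. No twist touches $u_p$ or $w_q$ (Claim~\ref{claim:twist-preservation}), so the coordinates at $p$ and $q$ keep their original values $c_p$ and $d_q$.

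\emph{Which degrees move.} Consider a twist $a_\ell b_\ell\Rightarrow a_\ell c_\ell$ with $a_\ell\in Z^U$, $b_\ell\in Y^W_2$, $c_\ell\in Z^W$. The degree of $a_\ell$ is unchanged, and only $W$-degrees move:
\[
d_{\ell+1}(b_\ell)=d_\ell(b_\ell)-1,\qquad d_{\ell+1}(c_\ell)=d_\ell(c_\ell)+1 .
\]
Symmetrically, a twist in the other class moves one unit of $U$-degree from a vertex of $Y^U_2$ to a vertex of $Z^U$. So the task is to show that $\delta_2\le d_L(w)\le\delta_1$ for every $w\in W\setminus\{w_q\}$, together with the analogous $U$-statement. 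In fact only two things need checking: vertices of $Y^W_2$ never drop below $\delta_2$, and vertices of $Z^W$ never exceed $\delta_1$.

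\emph{Intended mechanism.} I would aim to prove that every performed twist is a Robin-Hood transfer, that is,
\[
d_\ell(b_\ell)\ge d_\ell(c_\ell)+1 .
\]
Then both new values $d_\ell(b_\ell)-1$ and $d_\ell(c_\ell)+1$ lie in the interval $[d_\ell(c_\ell),\,d_\ell(b_\ell)]$. By the induction hypothesis this interval is contained in $[\delta_2,\delta_1]$, which closes the induction.

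\emph{Obtaining the inequality from the structure.} The first step is to compare the neighbourhoods of $b_\ell$ and $c_\ell$ in $G_\ell$ using Claims~\ref{claim1}--\ref{claim3}, which persist by Claim~\ref{claim:twist-preservation}.
\begin{enumerate}[(i)]
\item Neither vertex has an edge to $Y^U_1$ (Claim~\ref{claim1}), and neither is adjacent to $u_p$.
\item The vertex $c_\ell$ is complete to $X^U_{1,2}$ (Claim~\ref{claim3}), whereas $b_\ell$ has at least one non-edge into $X^U_{1,2}$, since $b_\ell\in Y^W_2$.
\item The vertex $b_\ell$ has no edge into $Y^U_2$ (Claim~\ref{claim3}), and it is adjacent to $a_\ell\in Z^U$ while $c_\ell$ is not.
\end{enumerate}
Observations (ii) and (iii) pull in opposite directions, so they do not by themselves give the required inequality.

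\emph{Fallback if the structure is insufficient.} I would then restrict the admissible twists to those with $d_\ell(b_\ell)\ge d_\ell(c_\ell)+1$, so that the Robin-Hood property holds by construction. I would then re-examine the later use of the terminal graph $G_L$ in building the blocking configuration, to check that this restricted stopping condition still suffices there.

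\emph{Main obstacle.} This degree comparison between $b_\ell$ and $c_\ell$ is the step I expect to be hardest. It is the only place where the bounds $\Gamma$ and $\Delta$ interact with the combinatorial structure. The remaining verifications above are routine bookkeeping.
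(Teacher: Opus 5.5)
There is a genuine gap, exactly at the step you flag as the main obstacle. The inequality $d_\ell(b_\ell)\ge d_\ell(c_\ell)+1$ is never established, and Claims~\ref{claim1}--\ref{claim3} do not force it. The vertex $b_\ell\in Y^W_2$ must miss some vertex of $X^U_2$ that $c_\ell$ sees, and $c_\ell$ may have neighbours in $Y^U_2$. Either of these can offset or exceed the single edge $a_\ell b_\ell$.

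The fallback does not rescue the argument. If only twists with $d_\ell(b_\ell)\ge d_\ell(c_\ell)+1$ are allowed, the proof of \eqref{eq:expanded-max} breaks down. Take a non-edge $uw$ with $u\in Z^U$, $w\in X^W_3$, and $rw\in E(G_L)$ for some $r\in Y^U_2$. It is then ruled out at termination only when some such $r$ has $\deg(r)>\deg(u)$. Hence conditions (a) and (b) of Definition~\ref{def:bip-blocking} may fail for $X^U_3$ and $X^W_3$, and Lemma~\ref{lem:bip-blocking-nonrealizable} no longer applies. In any case, the claim concerns the $G_L$ produced by the unrestricted process.

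The missing idea is that no comparison between $b_\ell$ and $c_\ell$ is needed. Every vertex whose degree changes lies in $S^U\cup S^W$, and such vertices obey bounds that hold in every $G_\ell$, whatever the order of the twists.

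Take $u\in S^U$. By Claim~\ref{claim2}, every chord from $u$ to $X^W_1$ is an edge. Twists never touch these pairs, and since $\F$ is a matching, at most one of them is forbidden. So $\deg_{G_\ell}(u)\ge|X^W_1|-1=\deg_G(u_p)-1\ge\gamma_2$.

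For the upper bound, $u$ is adjacent neither to $w_q$ nor to $Y^W_1$ (Claim~\ref{claim1}), and no twist creates such an edge. So $\deg_{G_\ell}(u)\le|X^W_{1,2}|+|S^W|$. Now take any $x\in X^U_1$. Then $x\neq u_p$ because $u_pw_q\notin E(G)$, so $\deg_G(x)\le\gamma_1$. By Claim~\ref{claim2}, $x$ is adjacent to all but at most one vertex of $X^W_{1,2}\cup S^W$, and also to $w_q$. Hence $|X^W_{1,2}|+|S^W|\le\deg_G(x)\le\gamma_1$.

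The $W$-side is symmetric, using $|X^U_1|-1=\deg_G(w_q)-1\ge\delta_2$ and a vertex of $X^W_1$. In your own reduction, this directly gives the lower bound for the vertices of $Y^W_2$ and the upper bound for the vertices of $Z^W$.
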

\begin{proof}
Each admissible downward twist preserves the number of edges, and no
twist is incident with $u_p$ or $w_q$.  Hence, after subtracting $e_p$
and $e_q$, the total sums are again $\Sigma$, and the $p$-th and
$q$-th coordinates are restored to their original values in $C$ and
$D$.  Moreover, only vertices in $S^U\cup S^W$ can have their degrees
changed.  Thus all degree bounds remain valid except possibly at
vertices in $S^U\cup S^W$.

Let $u\in S^U$.  By Claim~\ref{claim2}, every chord from $u$ to $X^W_1$
is an edge of $G$, and admissible downward twists never change such
edges.  Since $\F$ is a matching, at most one pair incident with $u$ is
forbidden.  Moreover,
\begin{displaymath}
\left |X^W_1 \right |=\deg_G(u_p)=c_p+1\ge \gamma_2+1.
\end{displaymath}
Consequently, at least $|X^W_1|-1$ pairs from $u$ to $X^W_1$ are
chords, and all of them are edges.  Hence
\begin{displaymath}
\deg_{G_L}(u)\ge |X^W_1|-1\ge\gamma_2.
\end{displaymath}

For the upper bound, first note that $u$ has no neighbors in
$\{w_q\}\cup Y^W_1$: it is not adjacent to $w_q$ by the definition of
$S^U$, and it is not adjacent to $Y^W_1$ by Claim~\ref{claim1}.  No admissible
downward twist creates an edge from $u$ to $\{w_q\}\cup Y^W_1$.  Hence
\begin{displaymath}
\deg_{G_L}(u)\le \left |X^W_{1,2}\right |+\left |S^W\right |.
\end{displaymath}
It remains to bound the right-hand side.  The set $X^U_1$ is nonempty,
since $\left |X^U_1\right |=\deg_G(w_q)=d_q+1$.  Choose $x^U_1\in X^U_1$.
The pair $u_pw_q$ is not an edge; otherwise it would itself be an
edge-abundant alternating path of length $1$ from $u_p$ to $w_q$.
Thus $x^U_1\ne u_p$, and so $\deg_G(x^U_1)\le \gamma_1$.  Put
$A=X^W_{1,2}\cup S^W$.  By Claim~\ref{claim2}, every chord from
$x^U_1$ to $A$ is an edge of $G$.  Since $\F$ is a matching, at most
one pair from $x^U_1$ to $A$ is forbidden, so $x^U_1$ has at least
$|A|-1$ neighbors in $A$.  Moreover, $w_q\notin A$ and $x^U_1w_q$ is
an edge by the definition of $X^U_1$.  Therefore
\begin{displaymath}
\deg_G(x^U_1)\ge (|A|-1)+1
=\left |X^W_{1,2}\right |+\left |S^W \right |.
\end{displaymath}
Consequently,
\begin{displaymath}
\deg_{G_L}(u)\le \left |X^W_{1,2}\right |+\left |S^W \right |\le \gamma_1.
\end{displaymath}
The argument for the $W$-side is symmetric; we include the details.
Consider $w\in S^W$.  Claim~\ref{claim2} and the matching
hypothesis give
\begin{displaymath}
\deg_{G_L}(w)\ge |X^U_1|-1=d_q\ge\delta_2.
\end{displaymath}
For the upper bound, $w$ has no neighbors in
$\{u_p\}\cup Y^U_1$, and no twist creates such a neighbor.  Choose
$x^W_1\in X^W_1$.  Then $x^W_1\ne w_q$,
$\deg_G(x^W_1)\le\delta_1$, every chord from $x^W_1$ to
$X^U_{1,2}\cup S^U$ is an edge, and $u_px^W_1$ is an additional edge.
The same count as above yields
\begin{displaymath}
\deg_{G_L}(w)\le |X^U_{1,2}|+|S^U|
\le\deg_G(x^W_1)\le\delta_1.
\end{displaymath}
Thus
\begin{displaymath}
\delta_2\le \deg_{G_L}(w)\le \delta_1
\end{displaymath}
for every $w\in S^W$.  This proves the claim.
\end{proof}

Finally, split $Z^U$ and $Z^W$ according to adjacency to the second
$Y$-block on the opposite side.  Let
\begin{displaymath}
X^U_3=\Gamma_{G_L}(Y^W_2)\cap Z^U,\qquad
X^W_3=\Gamma_{G_L}(Y^U_2)\cap Z^W,
\end{displaymath}
and
\begin{displaymath}
R^U=Z^U\setminus X^U_3,\qquad
R^W=Z^W\setminus X^W_3.
\end{displaymath}
Thus $Z^U=X^U_3\uplus R^U$ and $Z^W=X^W_3\uplus R^W$.

\begin{figure}[H]
\begin{center}
\begin{tikzpicture}[scale=.85]
\tikzstyle{vertex}=[draw,circle,fill=black,minimum size=3,inner sep=0]
\def\uYLeft{-.2}
\def\uYRight{2.2}
\def\uCenter{1}
\def\uVertexX{1.75}
\def\wYLeft{5.9}
\def\wYRight{8.3}
\def\wCenter{7.1}
\def\wVertexX{6.35}
\def\yTwoBottom{3.55}
\def\yTwoTop{4.35}
\def\yTwoCenter{3.95}
\def\uOuterLeft{-.45}
\def\uOuterRight{2.45}
\def\uOuterLabelX{-.95}
\def\wOuterLeft{5.65}
\def\wOuterRight{8.55}
\def\wOuterLabelX{9.05}
\def\outerBottom{.45}
\def\outerTop{2.95}
\def\outerLabelY{2.7}
\def\uInnerLeft{-.1}
\def\uInnerRight{2.1}
\def\wInnerLeft{6}
\def\wInnerRight{8.2}
\def\xThreeBottom{1.95}
\def\xThreeTop{2.65}
\def\xThreeCenter{2.3}
\def\rBottom{.75}
\def\rTop{1.45}
\def\rCenter{1.1}

\node at (\uCenter,4.9) {\Large $U$};
\node at (\wCenter,4.9) {\Large $W$};

\draw (\uYLeft,\yTwoBottom) rectangle (\uYRight,\yTwoTop);
\node at (\uCenter,\yTwoCenter) {$Y^U_2$};
\node[vertex] (uN) at (\uVertexX,\yTwoCenter) {};
\draw (\wYLeft,\yTwoBottom) rectangle (\wYRight,\yTwoTop);
\node at (\wCenter,\yTwoCenter) {$Y^W_2$};
\node[vertex] (wN) at (\wVertexX,\yTwoCenter) {};

\draw[line width=.6pt] (\uOuterLeft,\outerBottom) rectangle (\uOuterRight,\outerTop);
\node at (\uOuterLabelX,\outerLabelY) {\Large $Z^U$};
\draw (\uInnerLeft,\xThreeBottom) rectangle (\uInnerRight,\xThreeTop);
\node at (\uCenter,\xThreeCenter) {$X^U_3$};
\node[vertex] (u0) at (\uVertexX,\xThreeCenter) {};
\draw (\uInnerLeft,\rBottom) rectangle (\uInnerRight,\rTop);
\node at (\uCenter,\rCenter) {$R^U$};
\node[vertex] (u1) at (\uVertexX,\rCenter) {};

\draw[line width=.6pt] (\wOuterLeft,\outerBottom) rectangle (\wOuterRight,\outerTop);
\node at (\wOuterLabelX,\outerLabelY) {\Large $Z^W$};
\draw (\wInnerLeft,\xThreeBottom) rectangle (\wInnerRight,\xThreeTop);
\node at (\wCenter,\xThreeCenter) {$X^W_3$};
\node[vertex] (w0) at (\wVertexX,\xThreeCenter) {};
\draw (\wInnerLeft,\rBottom) rectangle (\wInnerRight,\rTop);
\node at (\wCenter,\rCenter) {$R^W$};
\node[vertex] (w1) at (\wVertexX,\rCenter) {};

\draw (u0) -- (wN);
\draw (uN) -- (w0);
\draw[line width=.8mm,dashed] (u1) -- (wN);
\draw[line width=.8mm,dashed] (uN) -- (w1);
\node at (4.1,\yTwoTop) {\small has a neighbor};
\node at (4.1,\rBottom) {\small no neighbor};
\end{tikzpicture}
\end{center}
\caption{Splitting $Z^U$ and $Z^W$ after the downward twists.  The set $X^U_3$ consists of the vertices in $Z^U$ with a neighbor in $Y^W_2$, while $R^U$ consists of the vertices in $Z^U$ with no such neighbor.  The $W$-side is symmetric.}
\label{fig:empty-zero-one}
\end{figure}

Since the downward-twist construction has terminated,
\begin{equation}\label{eq:expanded-max}
\left [Z^U\times X^W_3 \right ]\cap\mathcal C\subseteq E(G_L),
\qquad
\left [X^U_3\times Z^W \right ]\cap\mathcal C\subseteq E(G_L).
\end{equation}
Indeed, suppose that $u\in Z^U$, $w\in X^W_3$, and $uw$ is a chord.
By the definition of $X^W_3$, choose $r\in Y^U_2$ with
$rw\in E(G_L)$.  If $uw\notin E(G_L)$, then the symmetric hinge-flip
$wr\Rightarrow wu$ would still be admissible, contradicting the choice of
$G_L$.  Hence $uw\in E(G_L)$.  The other inclusion follows symmetrically.  By
definition,
\begin{equation}\label{eq:expanded-empty-tail}
\left [Y^U_2\times R^W \right ]\cap E(G_L)=\emptyset,\qquad
\left [R^U\times Y^W_2 \right ]\cap E(G_L)=\emptyset.
\end{equation}

Now set
\begin{displaymath}
X'_U=X^U_1\cup X^U_2\cup X^U_3,\quad
X'_W=X^W_1\cup X^W_2\cup X^W_3,
\end{displaymath}
\begin{displaymath}
Y'_U=Y^U_1\cup Y^U_2,\quad
Y'_W=Y^W_1\cup Y^W_2,
\end{displaymath}
and
\begin{displaymath}
R'_U=R^U,\quad R'_W=R^W.
\end{displaymath}
Together with the earlier partitions of $U$ and $W$, the identities
$Z^U=X^U_3\uplus R^U$ and $Z^W=X^W_3\uplus R^W$ give
\begin{displaymath}
U=\{u_p\}\uplus X'_U\uplus Y'_U\uplus R'_U,\qquad
W=\{w_q\}\uplus X'_W\uplus Y'_W\uplus R'_W.
\end{displaymath}

\begin{figure}[H]
\begin{center}
\begin{tikzpicture}[scale=1.0]
\tikzstyle{vertex}=[draw,circle,fill=black,minimum size=3,inner sep=0]
\def\uOuterLeft{-.9}
\def\uOuterRight{1.85}
\def\uOuterLabelX{-1.35}
\def\uInnerLeft{-.55}
\def\uInnerRight{1.45}
\def\uInnerCenter{.45}
\def\uVertexX{1.15}
\def\wOuterLeft{5.5}
\def\wOuterRight{8.25}
\def\wOuterLabelX{8.7}
\def\wInnerLeft{5.9}
\def\wInnerRight{7.9}
\def\wInnerCenter{6.9}
\def\wVertexX{6.2}
\def\xOuterBottom{5.15}
\def\xOuterTop{6.65}
\def\xOuterLabelY{6.35}
\def\xMainBottom{5.95}
\def\xMainTop{6.45}
\def\xMainCenter{6.2}
\def\xThreeBottom{5.35}
\def\xThreeTop{5.85}
\def\xThreeCenter{5.6}
\def\yOuterBottom{3.15}
\def\yOuterTop{4.65}
\def\yOuterLabelY{4.35}
\def\yOneBottom{3.95}
\def\yOneTop{4.45}
\def\yOneCenter{4.2}
\def\yTwoBottom{3.35}
\def\yTwoTop{3.85}
\def\yTwoCenter{3.6}
\def\rOuterBottom{1.75}
\def\rOuterTop{2.55}
\def\rOuterCenter{2.15}
\def\rInnerBottom{1.95}
\def\rInnerTop{2.35}
\def\xEdgeLineY{6.15}
\def\xTailLineY{5.25}
\def\yDashLineY{3.9}

\node at (-2,7.55) {\Large $U$};
\node at (9.5,7.55) {\Large $W$};
\node[vertex] (up) at (.5,7.05) [label=west:{$u_p$}] {};
\node[vertex] (wq) at (7.2,7.05) [label=east:{$w_q$}] {};

\draw[line width=.6pt] (\uOuterLeft,\xOuterBottom) rectangle (\uOuterRight,\xOuterTop);
\node at (\uOuterLabelX,\xOuterLabelY) {\Large $X'_U$};
\draw (\uInnerLeft,\xMainBottom) rectangle (\uInnerRight,\xMainTop);
\node at (\uInnerCenter,\xMainCenter) {$X^U_{1,2}$};
\node[vertex] (ku) at (\uVertexX,\xMainCenter) {};
\draw (\uInnerLeft,\xThreeBottom) rectangle (\uInnerRight,\xThreeTop);
\node at (\uInnerCenter,\xThreeCenter) {$X^U_3$};
\node[vertex] (rue0) at (\uVertexX,\xThreeCenter) {};

\draw[line width=.6pt] (\uOuterLeft,\yOuterBottom) rectangle (\uOuterRight,\yOuterTop);
\node at (\uOuterLabelX,\yOuterLabelY) {\Large $Y'_U$};
\draw (\uInnerLeft,\yOneBottom) rectangle (\uInnerRight,\yOneTop);
\node at (\uInnerCenter,\yOneCenter) {$Y^U_1$};
\node[vertex] (yu) at (\uVertexX,\yOneCenter) {};
\draw (\uInnerLeft,\yTwoBottom) rectangle (\uInnerRight,\yTwoTop);
\node at (\uInnerCenter,\yTwoCenter) {$Y^U_2$};
\node[vertex] (run) at (\uVertexX,\yTwoCenter) {};

\draw[line width=.6pt] (\uOuterLeft,\rOuterBottom) rectangle (\uOuterRight,\rOuterTop);
\node at (\uOuterLabelX,\rOuterCenter) {\Large $R'_U$};
\draw (\uInnerLeft,\rInnerBottom) rectangle (\uInnerRight,\rInnerTop);
\node at (\uInnerCenter,\rOuterCenter) {$R^U$};
\node[vertex] (rue1) at (\uVertexX,\rOuterCenter) {};

\draw[line width=.6pt] (\wOuterLeft,\xOuterBottom) rectangle (\wOuterRight,\xOuterTop);
\node at (\wOuterLabelX,\xOuterLabelY) {\Large $X'_W$};
\draw (\wInnerLeft,\xMainBottom) rectangle (\wInnerRight,\xMainTop);
\node at (\wInnerCenter,\xMainCenter) {$X^W_{1,2}$};
\node[vertex] (kw) at (\wVertexX,\xMainCenter) {};
\draw (\wInnerLeft,\xThreeBottom) rectangle (\wInnerRight,\xThreeTop);
\node at (\wInnerCenter,\xThreeCenter) {$X^W_3$};
\node[vertex] (rwe0) at (\wVertexX,\xThreeCenter) {};

\draw[line width=.6pt] (\wOuterLeft,\yOuterBottom) rectangle (\wOuterRight,\yOuterTop);
\node at (\wOuterLabelX,\yOuterLabelY) {\Large $Y'_W$};
\draw (\wInnerLeft,\yOneBottom) rectangle (\wInnerRight,\yOneTop);
\node at (\wInnerCenter,\yOneCenter) {$Y^W_1$};
\node[vertex] (yw) at (\wVertexX,\yOneCenter) {};
\draw (\wInnerLeft,\yTwoBottom) rectangle (\wInnerRight,\yTwoTop);
\node at (\wInnerCenter,\yTwoCenter) {$Y^W_2$};
\node[vertex] (rwn) at (\wVertexX,\yTwoCenter) {};

\draw[line width=.6pt] (\wOuterLeft,\rOuterBottom) rectangle (\wOuterRight,\rOuterTop);
\node at (\wOuterLabelX,\rOuterCenter) {\Large $R'_W$};
\draw (\wInnerLeft,\rInnerBottom) rectangle (\wInnerRight,\rInnerTop);
\node at (\wInnerCenter,\rOuterCenter) {$R^W$};
\node[vertex] (rwe1) at (\wVertexX,\rOuterCenter) {};

\draw (up) -- (kw);
\draw (wq) -- (ku);

\draw[line width=.8mm] (\uOuterRight,\xEdgeLineY) -- (\wOuterLeft,\xEdgeLineY);
\draw[line width=.8mm] (\uOuterRight,\xTailLineY) -- (\wOuterLeft,\rInnerTop);
\draw[line width=.8mm] (\uOuterRight,\rInnerTop) -- (\wOuterLeft,\xTailLineY);

\draw[line width=.8mm,dashed] (\uOuterRight,\yDashLineY) -- (\wOuterLeft,\yDashLineY);
\draw[line width=.8mm,dashed] (\uOuterRight,\yTwoBottom) -- (\wOuterLeft,\rInnerBottom);
\draw[line width=.8mm,dashed] (\uOuterRight,\rInnerBottom) -- (\wOuterLeft,\yTwoBottom);
\end{tikzpicture}
\end{center}
\caption{The final partition of $G_L$ as a bipartite blocking configuration.
}
\label{fig:final-partition}
\end{figure}

These partitions form a bipartite blocking configuration in $G_L$.
Indeed, by Claim~\ref{claim:twist-preservation}, all the relevant
incidences from Claims~\ref{claim1}--\ref{claim3} still hold in $G_L$.
Thus Claim~\ref{claim2}, the second part of Claim~\ref{claim3}, and
\eqref{eq:expanded-max} give conditions (a) and (b) of
Definition~\ref{def:bip-blocking}.  Similarly, Claim~\ref{claim1}, the
first part of Claim~\ref{claim3}, and
\eqref{eq:expanded-empty-tail} give conditions (c) and (d).  Finally,
no downward twist is incident with $u_p$ or $w_q$, and hence
\begin{displaymath}
\Gamma_{G_L}(u_p)=X^W_1\subseteq X'_W,\qquad
\Gamma_{G_L}(w_q)=X^U_1\subseteq X'_U.
\end{displaymath}

By Lemma~\ref{lem:bip-blocking-nonrealizable}, the sequence
$\bd(G_L)-(e_p,e_q)$, namely
\begin{displaymath}
(C_L-e_p,D_L-e_q)
\end{displaymath}
has no $\F$-free realization.  In view of~\eqref{eq:expanded-stays-in-region},
this sequence belongs to
\begin{displaymath}
\fD(\Sigma;n,\Gamma;m,\Delta;\F).
\end{displaymath}
Therefore the region is not fully $\F$-graphic, completing the proof of Theorem~\ref{tm:main-tree}.
\end{proof}

\begin{proof}[Proof of Theorem~\ref{tm:main}]
Fix $(C,D;\F)\in\fD$, and let
\begin{displaymath}
\Omega_{C,D,\F}=
\left\{(i,j,G'):\begin{array}{l}
1\le i\le n,\ 1\le j\le m,\\[1mm]
G'\in\G(C+e_i,D+e_j,\F)
\end{array}\right\}.
\end{displaymath}
Thus the indices are included as tags in $\Omega_{C,D,\F}$, and
\begin{equation}\label{eq:Omega-cardinality}
|\Omega_{C,D,\F}|=
\sum_{i=1}^n\sum_{j=1}^m \left|\G(C+e_i,D+e_j,\F)\right|.
\end{equation}

Fix a total order on all ordered paths of odd length at most $9$ in the
complete bipartite graph with classes $(U,W)$.  For
$(i,j,G')\in\Omega_{C,D,\F}$, Theorem~\ref{tm:main-tree} gives an
edge-abundant alternating path from $u_i$ to $w_j$ of length at most $9$.
Let $P(i,j,G')$ be the first such path in the fixed order.  Define
\begin{displaymath}
\Phi:\Omega_{C,D,\F}\longrightarrow \G(C,D,\F)
\end{displaymath}
by flipping every edge and non-edge along $P(i,j,G')$.  The internal vertices of this path retain their degrees, while $u_i$ and $w_j$ each lose one incident edge.  Since every added pair is a non-edge, $\Phi(i,j,G')$ is indeed an $\F$-free realization of $(C,D)$.

We now bound the fibers of $\Phi$.  Given $G^\dagger\in\G(C,D,\F)$ and the ordered path $P=P(i,j,G')$, one recovers $G'$ uniquely by flipping the consecutive pairs of $P$ in $G^\dagger$; moreover, the first and last
vertices of $P$ recover $i$ and $j$.  Hence the elements of $\Phi^{-1}(G^\dagger)$ have distinct selected paths.

Every possible selected path has the form
\begin{displaymath}
P=(u_{a_1},w_{b_1},\ldots,u_{a_k},w_{b_k})
\qquad (1\le k\le5).
\end{displaymath}
Pad each of the two ordered lists
$(u_{a_1},\ldots,u_{a_k})$ and $(w_{b_1},\ldots,w_{b_k})$ to length
five by repeating the final entry.  Because $P$ is a path, its entries
before the padding are distinct, so this gives an injective encoding of
all such ordered paths into $U^5\times W^5$.  Consequently,
\begin{displaymath}
|\Phi^{-1}(G^\dagger)|\le |U|^5|W|^5=(nm)^5.
\end{displaymath}
Therefore
\begin{displaymath}
|\Omega_{C,D,\F}|\le (nm)^5|\G(C,D,\F)|.
\end{displaymath}
Dividing by $|\G(C,D,\F)|$ and using \eqref{eq:Omega-cardinality} and~\eqref{eq:localP-F}, we obtain
\begin{displaymath}
\pppp{(C,D,\F)}\le (nm)^5.
\end{displaymath}
\end{proof}

\section{Fully graphic bipartite and directed regions - examples}\label{sec:examples}
\noindent
As we already mentioned, the fact that an infinite degree region is $P$-stable is very useful. However, proving directly such a statement is not an easy task. Typically it needs direct enumeration of graph realizations under complicated requirements, or tricky manipulations of graph realizations and "almost" realizations.

\medskip\noindent The notion of $P$-stability, in its current form, for  simple graphs was introduced by  Jerrum, McKay and Sinclair (\cite{JMS92}) in 1992, and they described the first $P$-stable regions in the same paper (Theorem 8.1 and Lemma 8.3). However, prior to 2026 there were only two known P-stable infinite \emph{bidegree} regions. Both were discovered (implicitly) in the paper \cite{linear} in 2018. (The authors even were not aware of the notion of $P$-stability.) These regions can be described as:
\begin{enumerate}[(PS1)]
\item Theorem 3:
\begin{equation}\label{eq:PB1}
\varphi_1(C,D)\ \equiv\ (\gamma_1-\gamma_2-1)\cdot (\delta_1- \delta_2 -1) \le\max\{ \gamma_2(n-\delta_1), \delta_2(m-\gamma_1)\}.
\end{equation}
\item Theorem 2:
\begin{equation}\label{eq:PB3}
\varphi_2(C,D)\ \equiv\ 2\le \gamma_1,\delta_1\le \sqrt{\Sigma/2}.
\end{equation}
\end{enumerate}
The first condition defines a very simple region since it applies for each feasible edge number, whereas the second region depends on $\Sigma$.  These conditions were
used directly to prove the rapid mixing behavior of the bipartite switch Markov chain. The recognition of the fact that these regions are $P$-stable provides a much easier proof via the general results of \cite{P-stable}.

There are also only two known P-stable directed regions, both were proved (implicitly) in the same paper (\cite{linear}). These are the following directed degree-sequence regions on the underlying set $X$:
\begin{enumerate}[(PS1)]
\setcounter{enumi}{2}
\item Theorem 5:
\begin{equation}\label{eq:PD1}
\varphi_3(C,D,\F) \equiv (\gamma_1- \gamma_2)\cdot (\delta_1-\delta_2) \le (2 - |X|) + \max
\left\{
  \begin{array}{ll}
   \gamma_2(|X|-\delta_1-1)+\delta_2+\gamma_1, & \\
    \delta_2(|X|-\gamma_1-1)+\gamma_2+\delta_1. &
  \end{array}
\right.
\end{equation}
\item Theorem 4:
\begin{equation}\label{eq:PD2}
\varphi_4(C,D,\F) \equiv 2\le \delta_1 \le \gamma_1\le \sqrt{\Sigma/2-4}.
\end{equation}
\end{enumerate}
Let's remark that \eqref{eq:PD1} is not in the form of a basic directed region  \eqref{eq:DSR-S} since, through the parameter $|X|$, it depends on the size of the partial pairing, since $|X|=n+m-|\F|$.

In this section we will extend all those results by increasing significantly the RHS  of those inequalities. We will also develop new type of $P$-stable regions, similar to \cite[Theorem 8.3]{JMS92}.

Preparing this article it was interesting to recognize, how similarly looked the bipartite and the directed results above. This similarity remains at these new results as well. The reason of these similarity lies in the use of the method of PPBGs.

In the next subsection we will present our fully graphic bidegree regions, while in the second subsection we will discuss some fully $\F$-free  graphic regions. It will be easy to recognize, that the results look alike, but the directed cases always require slightly stronger assumptions.

\subsection{Fully graphic bipartite regions}\label{sec:fully-bipart}
\noindent
At first we discuss the extension of theorem (PS1). Instead of maximum of two terms in the RHS, we take  their sum:

\begin{theorem}\label{th:b-fully1}
The very simple region $\fB(n,\Gamma; m, \Delta)$ is fully graphic if
\begin{equation}\label{eq:PB2'}
 (\gamma_1-\gamma_2-1)\cdot (\delta_1- \delta_2 -1) \le  \gamma_2(n-\delta_1) +\delta_2(m-\gamma_1)-3.
\end{equation}
\end{theorem}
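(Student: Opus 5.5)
By Theorem~\ref{th:DI-LEG} with $\F=\emptyset$ (so $f=0$), it suffices to show that for every feasible $\Sigma$ with $\max\{n\gamma_2,m\delta_2\}<\Sigma<\min\{n\gamma_1,m\delta_1\}$ the single sequence $\lbd(\Sigma,n,\Gamma;m,\Delta;0)=(\bar C,\bar D)$ is graphic. I will verify this through the Gale--Ryser inequalities \eqref{eq:ZZ}. By Theorem~\ref{th:ZZ2}, only the drop indices of $\bar C$ and $k=n$ need checking. At $k=n$ the inequality holds trivially since $d_j\le n$. The drop indices are $k=\alpha$ and $k=\alpha+1$, the latter only when $\gamma>\gamma_2$. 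By Remark~\ref{th:alfa} we may further assume $\delta_2\le\alpha\le\delta_1$, since otherwise the LEG is automatically graphic.

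Fix $k\in\{\alpha,\alpha+1\}$. The left side is $\sum_{i\le k}\bar c_i\le k\gamma_1$ (precisely $\alpha\gamma_1$, or $\alpha\gamma_1+\gamma$). For the right side, write $\sum_j\min(\bar d_j,k)=\Sigma-\sum_j(\bar d_j-k)^+$. Substituting $\Sigma=\alpha\gamma_1+\gamma+(n-1-\alpha)\gamma_2$ reduces the inequality to
\[
\sum_j(\bar d_j-k)^+\le \gamma+(n-1-\alpha)\gamma_2 \quad(k=\alpha),
\]
and analogously for $k=\alpha+1$. The excess on the left is at most $\beta(\delta_1-k)+(\delta-k)^+$. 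So the task becomes bounding $\beta(\delta_1-\alpha)$ by $(n-\alpha)\gamma_2$ up to additive constants. Here I use the two formulas for $\Sigma$. From the $D$-side, $\Sigma\ge \beta\delta_1+(m-\beta)\delta_2$, hence $\beta(\delta_1-\delta_2)\le\Sigma-m\delta_2$. From the $C$-side, $\Sigma\le\alpha\gamma_1+(n-\alpha)\gamma_2+(\gamma_1-\gamma_2)$, so $\alpha(\gamma_1-\gamma_2)\approx\Sigma-n\gamma_2$. Symmetrically I may assume $\gamma_2\le\beta\le\gamma_1$. I would then treat the deficit $(n-\alpha)\gamma_2-\beta(\delta_1-\alpha)$ as a function of the two free quantities $\alpha\in[\delta_2,\delta_1]$ and $\beta\in[\gamma_2,\gamma_1]$, linked through $\Sigma$. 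Eliminating $\Sigma$ gives $\alpha(\gamma_1-\gamma_2)+n\gamma_2\approx\beta(\delta_1-\delta_2)+m\delta_2$ up to rounding errors of size $\gamma_1-\gamma_2$ and $\delta_1-\delta_2$.

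The main obstacle is this final optimization. I must show that the worst case of $\beta(\delta_1-\alpha)-(n-\alpha)\gamma_2$ over the feasible $(\alpha,\beta)$ box is at most $(\gamma_1-\gamma_2-1)(\delta_1-\delta_2-1)-\gamma_2(n-\delta_1)-\delta_2(m-\gamma_1)$ plus the slack absorbed by the $-3$. The expression is bilinear, so I would first try evaluating at the corners $\alpha\in\{\delta_2,\delta_1\}$, $\beta\in\{\gamma_2,\gamma_1\}$, then use the linking relation to trade $\beta$ against $\alpha$. A product like $(\gamma_1-\gamma_2)(\delta_1-\delta_2)$ minus the two boundary terms should appear, with the $-1$'s and the $-3$ paying for the floor in \eqref{eq:alfa}, for the middle entries $\gamma,\delta$, and for the $k=\alpha+1$ case. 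I expect the delicate bookkeeping to lie there: the contributions of $\gamma$ and $\delta$, and the case $\gamma=\gamma_2$.
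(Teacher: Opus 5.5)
There is a genuine gap. Your reduction is correct and matches the paper's first step: by Theorem~\ref{th:DI-LEG} it suffices to check the LEG, and the Gale--Ryser inequalities at $k=\alpha$ and $k=\alpha+1$ are exactly what Lemma~\ref{lem:alpha+1} packages. But your argument stops where the actual work begins, and the plan you sketch for the last step would not work as formulated.

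Write $a=\alpha-\delta_2$, $b=\delta_1-\alpha$, $c=\beta-\gamma_2$, $d=\gamma_1-\beta$, $[\mathbf A]=\delta_2(m-\gamma_1)$ and $[\mathbf D]=\gamma_2(n-\delta_1)$. Your quantity is then $\beta(\delta_1-\alpha)-(n-\alpha)\gamma_2=bc-[\mathbf D]$.

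Evaluating at corners of the box does not help. At $(\alpha,\beta)=(\delta_2,\gamma_1)$ this quantity equals $(\delta_1-\delta_2)(\gamma_1-\gamma_2)-[\mathbf D]$, which \eqref{eq:PB2'} does not control, because the hypothesis bounds the product only by the \emph{sum} $[\mathbf A]+[\mathbf D]$.

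Worse, the target you state, $bc-[\mathbf D]\le(a+b-1)(c+d-1)-[\mathbf A]-[\mathbf D]+O(1)$, is false even for pairs actually realized by some $\Sigma$. Take $n=m=20$, $\gamma_1=\delta_1=12$, $\gamma_2=\delta_2=10$, $\Sigma=222$. Then $\alpha=\beta=11$ and the hypothesis holds ($1\le 157$). Yet your left side is $-79$, while your right side is $-159+O(1)$.

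Finally, using the link between $\alpha$ and $\beta$ only up to errors of order $\gamma_1-\gamma_2$ and $\delta_1-\delta_2$ is too lossy. The terms $\gamma-\gamma_2$ and $\delta-\delta_2$ are not $O(1)$, so the constant $-3$ cannot pay for them.

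The missing idea is that non-graphicality yields two inequalities, and only their sum can be compared with \eqref{eq:PB2'}. The identity $\alpha\gamma_1+\gamma+(n-1-\alpha)\gamma_2=\Sigma=\beta\delta_1+\delta+(m-1-\beta)\delta_2$ gives exactly $(ad-[\mathbf A])-(bc-[\mathbf D])=(\delta-\delta_2)-(\gamma-\gamma_2)$. So the Gale--Ryser condition can be read on either side.

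Lemma~\ref{lem:alpha+1} turns this into two sufficient conditions, $ad+d\le[\mathbf A]$ and $bc+b\le[\mathbf D]$. It gets them by dropping the rounding terms in the favorable direction, using $\min\{\alpha,\delta\}\ge\delta_2$ and $\gamma\le\gamma_1$. If the LEG were not graphic, both conditions would fail.

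Adding the two failures gives $ad+bc+b+d\ge[\mathbf A]+[\mathbf D]+2$. Combine this with $(a+b-1)(c+d-1)=ad+bc+ac+bd-(a+b+c+d)+1$ and \eqref{eq:PB2'}. The result is $(a-1)(c-1)+(b-2)(d-2)+1\le 0$, which is impossible when $a,c\ge1$ and $b,d\ge2$.

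The remaining cases are $b=1$ or $d=1$, that is, $\alpha=\delta_1-1$ or $\beta=\gamma_1-1$. This is exactly where the $-1$'s and the $-3$ matter. There the $k=\alpha+1$ inequality is automatic, and the sharper condition $\alpha(\gamma_1-\beta)\le\delta_2(m-\beta)$ (resp.\ its mirror) closes the argument.

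None of this appears in your proposal. What you have is a correct reduction followed by a heuristic, not a proof.
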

\noindent
Indeed, one can note that the right hand side of \eqref{eq:PB2'} is essentially the sum of the two expressions on the right hand side of the inequality defining (PS1).  Therefore, as long as $\max\{\gamma_2(n-\delta_1), \delta_2(m-\gamma_1)\} \geq 3$, our region is an extension of that considered in (PS1), often to a significant degree.

\begin{lemma}\label{lem:alpha+1}
Using the notation of Definition~\ref{def:LEG}  if either
\begin{equation}\label{eq:strong2}
(\alpha+1)(\gamma_1 - \beta) \le  \delta_2(m-\beta)
\end{equation}
or
\begin{equation}\label{eq:strong2b}
(\beta+1)(\delta_1 - \alpha) \le  \gamma_2(n-\alpha)
\end{equation}
hold, then   $\fB(\Sigma; n,\Gamma; m, \Delta)$ is fully graphic.

Furthermore, in the special case of $\alpha = \delta_1-1$ (respectively $\beta = \gamma_1-1$) the slightly weaker inequality
\begin{equation}
    \alpha(\gamma_1-\beta) \leq \delta_2(m-\beta)
\end{equation}
respectively
\begin{equation}
 \beta(\delta_1-\alpha) \leq \gamma_2(n-\alpha)
\end{equation}
already implies that  $\fB(\Sigma; n,\Gamma; m, \Delta)$ is fully graphic.
\end{lemma}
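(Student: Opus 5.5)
By Theorem~\ref{th:DI-LEG} with $f=0$ (so $\F_0=\emptyset$), the basic region $\fB(\Sigma;n,\Gamma;m,\Delta)$ is fully graphic if and only if the single sequence $\lbd(\Sigma;n,\Gamma;m,\Delta;0)=(\bar C,\bar D)$ is graphic. The whole proof therefore reduces to checking the Gale--Ryser inequalities (Theorem~\ref{th:ZZ1}) for this one explicit pair. I would check them with the classes exchanged, testing prefixes of $\bar D$ against $\bar C$. By Theorem~\ref{th:ZZ2} only the drop indices of $\bar D$ need checking, namely $k=\beta$, $k=\beta+1$ (when $\delta>\delta_2$) and $k=m$. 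By Remark~\ref{th:alfa} we may also assume the range \eqref{eq:alfa1}, in particular $\delta_2\le\alpha\le\delta_1$ and $\gamma_2\le\beta\le\gamma_1$.

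The key step is to rewrite the inequality in complementary form. Since both sequences sum to $\Sigma$, the condition $\sum_{j\le k}\bar d_j\le\sum_i\min(\bar c_i,k)$ is equivalent to
\[
\sum_{j>k}\bar d_j\;\ge\;\sum_{i=1}^n(\bar c_i-k)^+ .
\]
At $k=\beta$ the right-hand side is at most $(\alpha+1)(\gamma_1-\beta)$. Here only the $\alpha$ entries equal to $\gamma_1$ and the entry $\gamma$ can exceed $\beta$, because $\gamma_2\le\beta$. The left-hand side is at least $\delta_2(m-\beta)$. Hence \eqref{eq:strong2} gives exactly this inequality.

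At $k=\beta+1$ the left-hand side decreases by at most $\delta\le\delta_1$ in general, but it is still at least $\delta_2(m-\beta-1)$. The right-hand side decreases by at least $\alpha+1$, one unit for each entry exceeding $\beta$; if some entry equals $\beta$ exactly, the count is handled with a little care. Since $\alpha+1>\delta_2$ in the remaining range, the $k=\beta$ inequality implies the $k=\beta+1$ one. The index $k=m$ holds with equality, since both sides equal $\Sigma$. The alternative hypothesis \eqref{eq:strong2b} is handled by the same computation with the roles of $(\bar C,\gamma)$ and $(\bar D,\delta)$ interchanged, testing prefixes of $\bar C$ at its drop indices $\alpha,\alpha+1,n$.

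For the special case $\alpha=\delta_1-1$ I expect the main obstacle: I must show that the factor $\alpha+1$ can be lowered to $\alpha$. My first attempt is to separate the contribution $(\gamma-\beta)^+$ of the middle entry $\gamma$ of $\bar C$ and compare it with the slack on the left-hand side. That slack includes the middle entry $\delta$ of $\bar D$ and the inequality $\delta_2(m-\beta)$ versus the true tail sum. I would also use the exact identity $\Sigma=\alpha\gamma_1+\gamma+(n-1-\alpha)\gamma_2$ together with $\alpha+1=\delta_1$.

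If that comparison fails, the fallback is Berger/Zverovich on the $C$-side at $k=\alpha+1=\delta_1$. There $\sum_j\min(\bar d_j,\delta_1)=\Sigma$ holds trivially, and the inequality at $k=\alpha$ can be reduced to a statement involving only $\alpha$ top entries. The symmetric case $\beta=\gamma_1-1$ then follows by exchanging the classes.
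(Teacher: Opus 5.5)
Your proof is correct, but for the main statement it works on the opposite side from the paper. The paper applies Theorem~\ref{th:ZZ2} to prefixes of $\bar C$ at $k=\alpha$ and $k=\alpha+1$. For $\delta_2\le\alpha<\delta_1$ these two inequalities are implied by $\alpha(\gamma_1-\beta)\le\delta_2(m-\beta)$ and $\alpha(\gamma_1-\beta)+\gamma\le\beta+\delta_2(m-\beta)$, and \eqref{eq:strong2} gives both because $\gamma\le\gamma_1$; \eqref{eq:strong2b} is left to symmetry.

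You transpose this. You test prefixes of $\bar D$ at $k=\beta,\beta+1$ in the complementary form $\sum_{j>k}\bar d_j\ge\sum_i(\bar c_i-k)^+$. There \eqref{eq:strong2} appears at once as ``tail of $\bar D$ below row $\beta$ versus overhang of $\bar C$ beyond column $\beta$'', and the extra $+1$ in $\alpha+1$ visibly comes from the middle entry $\gamma$.

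At $k=\beta+1$ you do not need to count entries. For $\gamma_2\le k\le\gamma_1$ one has $\sum_i(\bar c_i-k)^+\le(\alpha+1)(\gamma_1-k)$ and $\sum_{j>k}\bar d_j\ge\delta_2(m-k)$. So $k=\beta+1$ follows from $k=\beta$ by subtracting $\alpha+1\ge\delta_2$, and it is trivial if $\beta=\gamma_1$. As literally stated, your claim that the right-hand side drops by at least $\alpha+1$ fails when $\gamma\le\beta$.

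Both versions are instances of the same family of cut conditions, so neither is more general. The trade-off is that the paper's side choice is tailored to the special case. There the test at $k=\alpha+1=\delta_1$ has right-hand side $\sum_j\min(\bar d_j,\delta_1)=\Sigma$, so it holds automatically. The test at $k=\alpha$ reads $\alpha\gamma_1\le\alpha\beta+\min\{\alpha,\delta\}+(m-1-\beta)\delta_2$, which follows from the weaker hypothesis because $\min\{\alpha,\delta\}\ge\delta_2$.

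Your ``fallback'' for $\alpha=\delta_1-1$ is exactly this argument. State it as the proof rather than as a contingency, and write out the estimate $\sum_j\min(\bar d_j,\alpha)\ge\alpha\beta+\delta_2(m-\beta)$ in place of ``reduced to a statement involving only $\alpha$ top entries''.

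Your first attempt would also have worked:
\begin{itemize}
\item If $\gamma\le\beta$, the weaker hypothesis gives the tests at $k=\beta,\beta+1$ directly, using $\alpha\ge\delta_2$.
\item If $\gamma>\beta$, the identity $\Sigma=\alpha\gamma_1+\gamma+(n-1-\alpha)\gamma_2=\beta\delta_1+\delta+(m-1-\beta)\delta_2$ turns the two tests into $\beta(\alpha+1-\delta_1)+(n-1-\alpha)\gamma_2\ge0$ and $\beta(\alpha+1-\delta_1)+(\alpha+1-\delta)+(n-1-\alpha)\gamma_2\ge0$. Both are trivial when $\alpha+1=\delta_1$, and in this case the first is the same cut as the paper's test at $k=\alpha+1$.
\end{itemize}
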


\begin{proof}
Due to Theorem \ref{th:DI-LEG} we know that region $\fB(\Sigma; n,\Gamma; m, \Delta)$ is fully graphic when the $\lbb(\Sigma; n,\Gamma; m, \Delta)$  is graphic. By Theorem \ref{th:ZZ2} this is equivalent to showing the two inequalities
\begin{align}
\alpha \gamma_1 &\le \beta \min(\alpha,\delta_1) + \min \{\alpha,\delta\} + \delta_2 (m-\beta-1) \label{eq:alap1}\; \text{ and }\\
\alpha \gamma_1 + \gamma &\le \beta\min(\alpha+1,\delta_1) + \min \{\alpha+1,\delta\} + \delta_2 (m-\beta-1). \label{eq:alap2}
\end{align}
When $\alpha \geq \delta_1$ then the RHSs of both inequalities are just $\Sigma$, so both inequalities are automatically satisfied, hence the region is fully graphic as claimed.

When $\alpha< \delta_1$ then \eqref{eq:alap1} and \eqref{eq:alap2} take the following form:
\begin{align}
\alpha \gamma_1 &\le \alpha \beta + \min \{\alpha,\delta\} + \delta_2 (m-\beta-1) \label{eq:bounded1}\; \text{ and }\\
\alpha \gamma_1 + \gamma &\le (\alpha+1)\beta + \min \{\alpha+1,\delta\} + \delta_2 (m-\beta-1). \label{eq:bounded2}
\end{align}
Note that $\delta_2 \leq \min\{\alpha,\delta\} \leq \min\{\alpha+1,\delta\}$ hence it suffices to show
\begin{align}
\alpha (\gamma_1-\beta) &\le \delta_2 (m-\beta) \label{eq:bounded1b}\; \text{ and }\\
\alpha (\gamma_1-\beta) + \gamma &\le \beta +\delta_2 (m-\beta). \label{eq:bounded2b}
\end{align}
Since $\gamma \leq \gamma_1$, equation \eqref{eq:strong2} clearly implies both \eqref{eq:bounded1b} and \eqref{eq:bounded2b}.
Note that \eqref{eq:strong2b} also implies the fully graphic property due to the symmetry of the roles of the two parts.

Consider now the case $\alpha = \delta_1-1.$ Then the RHS of the inequality \eqref{eq:bounded2} still equals $\Sigma$ so it is automatically satisfied. Thus we only need to verify \eqref{eq:bounded1}, which, as we have seen, follows from \eqref{eq:bounded1b}.
The $\beta=\gamma_1-1$ case follows similarly.

\end{proof}

\begin{proof}[Proof of Theorem~\ref{th:b-fully1}] We prove by contradiction. Suppose that the very simple region $\fB(n,\Gamma;m,\Delta)$ is not fully graphic. Thus, for some $\max \{n\gamma_2, m\delta_2\} < \Sigma < \min\{n \gamma_1, m\delta_1\}$, the simple region $\fB(\Sigma;n,\Gamma;m,\Delta)$ is also not fully graphic.

Then, by Lemma \ref{lem:alpha+1}, we get that the parameters of $\lbb(\Sigma;n,\Gamma;m,\Delta)$ do not satisfy \eqref{eq:strong2}, nor \eqref{eq:strong2b}. That is, we have

\begin{align}
(\alpha+1) (\gamma_1-\beta) & \geq \delta_2 (m-\beta)+1\; \text{ and }\\
(\beta+1)(\delta_1 - \alpha) & \geq \gamma_2(n-\alpha)+1.
\end{align}
We also know from Lemma~\ref{lem:alpha+1} that
\begin{equation}\label{eq:alphabetabounds}
    \delta_2 < \alpha < \delta_1 \text{ and } \gamma_2 < \beta < \gamma_1
\end{equation}
hence Figure~\ref{fig1} depicts the correct ordering of these parameters.
\begin{figure}[ht]
        \centering
\begin{tikzpicture}[scale=.6]
    \filldraw[line width=0mm,fill=gray!30!white] (0,7) rectangle (2,8);
    \filldraw[line width=0mm,pattern=north east lines] (2,5) rectangle (4,7);
    \filldraw[line width=0mm,fill=gray!30!white] (4,3) rectangle (7,5);
    \filldraw[line width=0mm,pattern=north east lines] (7,0) rectangle (10,3);

    \draw [very thick] (0,0) -- (7,0);
    \draw [color=gray!50!black,very thick,decorate, decoration={bumps}] (-0.03,-.15) -- (10,-.05);
    \draw [color=gray!50!black,very thick,decorate, decoration={bumps}] (-0.03,-.15) -- (-.03,7);
    \node at (2,-.5) {$\delta_2$}; \node at (4,-.5) {$\alpha$}; \node at (7,-.5) {$\delta_1$};
    \node at (10,-.5) {$n$};
    \draw [very thick] (0,0) -- (0,8);
    \node at (-.5,3) {$\gamma_2$}; \node at (-.5,5) {$\beta$}; \node at (-.5,7) {$\gamma_1$};
    \node at (-.5,8) {$m$};

    \draw [very thick] (2,5) -- (2,8); \draw [very thick] (0,8) -- (2,8);
    \draw[very thin,dashed] (2,0) -- (2,5);
    \draw [color=gray!50!black,very thick,decorate, decoration={bumps}] (4.05,3) -- (4.05,7); \draw [color=gray!50!black,very thick,decorate, decoration={bumps}] (0,6.95) -- (4,6.95);
    \draw[very thin,dashed] (4,0)--(4,3);
    \draw [very thick] (7,-.1) -- (7,5); \draw [very thick] (2,5) -- (7,5);
    \draw[very thin,dashed] (0,5)--(2,5);
    \draw [color=gray!50!black,very thick,decorate, decoration={bumps}] (10.05,0) -- (10.05,3); \draw [color=gray!50!black,very thick,decorate, decoration={bumps}] (4,2.95) -- (10,2.95);
    \draw[very thin,dashed] (0,3)--(4,3);

    \node at (1,7.5) {$\mathbf A$}; \node at (3,6) {$\mathbf B$}; \node at (5.5,4) {$\mathbf C$};
    \node at (8.5,1.5) {$\mathbf D$}; \node at (1,6) {$\mathbf T_1$};\node at (5.5,1.5) {$\mathbf T_2$};
\end{tikzpicture}
\begin{tikzpicture}[scale=.6]
    \filldraw[line width=0mm,fill=gray!30!white] (0,7) rectangle (2,8);
    \filldraw[line width=0mm,pattern=north east lines] (2,5) rectangle (4,7);
    \filldraw[line width=0mm,fill=gray!30!white] (4,3) rectangle (7,5);
    \filldraw[line width=0mm,pattern=north east lines] (7,0) rectangle (10,3);

    \draw [color=gray!50!black,very thick,decorate, decoration={bumps}] (-0.03,-.15) -- (10,-.05);
    \draw [color=gray!50!black,very thick,decorate, decoration={bumps}] (-0.03,-.15) -- (-.03,7);
    \draw [very thick] (0,0) -- (7,0);
    \node at (2,-.5) {$\delta_2$}; \node at (4,-.5) {$\alpha$}; \node at (7,-.5) {$\delta_1$};
    \node at (10,-.5) {$n$};
    \draw [very thick] (0,0) -- (0,8);
    \node at (-.5,3) {$\gamma_2$}; \node at (-.5,5) {$\beta$}; \node at (-.5,7) {$\gamma_1$};
    \node at (-.5,8) {$m$};

    \draw [very thick] (2,5) -- (2,8); \draw [very thick] (0,8) -- (2,8);
    \draw[very thin,dashed] (2,0) -- (2,5);
    \draw [color=gray!50!black,very thick,decorate, decoration={bumps}] (4.05,3) -- (4.05,7);
    \draw [color=gray!50!black,very thick,decorate, decoration={bumps}] (0,6.95) -- (4,6.95);
    \draw[very thin,dashed] (4,0)--(4,3);
    \draw [very thick] (7,-.1) -- (7,5); \draw [very thick] (2,5) -- (7,5);
    \draw[very thin,dashed] (0,5)--(2,5);
    \draw [color=gray!50!black,very thick,decorate, decoration={bumps}] (10.05,0) -- (10.05,3); \draw [color=gray!50!black,very thick,decorate, decoration={bumps}] (4,2.95) -- (10,2.95);
    \draw[very thin,dashed] (0,3)--(4,3);

    \node at (1,7.5) {$\mathbf A$};\node at (3,6) {$\mathbf B$}; \node at (5.5,4) {$\mathbf C$};
    \node at (8.5,1.5) {$\mathbf D$};
    \node at (3,4) {$\mathbf T_3$};\node at (5.5,6) {$\mathbf T_4$};
\end{tikzpicture}
\caption{Let $[\mathbf A]$ denote the area of $\mathbf A$. Then $\alpha(\gamma_1 - \beta)=[\mathbf B]+[\mathbf T_1]$, and $\delta_2(m-\beta)=[\mathbf A]+[\mathbf T_1]$. Furthermore $\beta(\delta_1-\alpha)=[\mathbf C]+[\mathbf T_2]$ and $\gamma_2(n-\alpha)=[\mathbf D]+[\mathbf T_2]$. Lastly, $\Sigma - (\gamma-\gamma_2)$ is equal to the L-shaped area surrounded by bumpy lines. \label{fig1}}
\end{figure}

Using the notation from Figure \ref{fig1} (see the caption for explanation), the last two inequalities can be reformulated as:
\begin{align}\label{eq:must1}
\begin{split}
(\alpha+1)(\gamma_1 - \beta) \geq  \delta_2 (m-\beta)+1 \quad &\Leftrightarrow \quad [\mathbf B]+\gamma_1-\beta \geq [\mathbf A]+1, \\
(\beta+1)(\delta_1-\alpha) \geq  \gamma_2(n-\alpha)+1  \quad &\Leftrightarrow \quad [\mathbf C]+\delta_1-\alpha \geq [\mathbf D]+1.
\end{split}
\end{align}
To simplify the expressions, let us denote
\[
a:=\alpha-\delta_2, b:=\delta_1-\alpha \quad\text{and}\quad c:=\beta-\gamma_2, d:=\gamma_1-\beta.
\]
Then the inequalities in \eqref{eq:must1} can be written as
\begin{align}\label{eq:must2}
\begin{split}
a\cdot d + d \geq [\mathbf A]+1 \\
b \cdot c + b \geq [\mathbf D] +1,
\end{split}
\end{align}
whereas the assumption of the theorem, \eqref{eq:PB2'}, takes the form
\begin{equation}\label{eq:PBabcd}
    (a+b-1)(c+d-1) \leq [\mathbf A]+ [\mathbf D] - 3.
\end{equation}
Combining the sum in \eqref{eq:must2} with \eqref{eq:PBabcd} we arrive at
\begin{equation}
    ad + bc + d + b \geq (a+b-1)(c+d-1)+4 = ac + bd+ ad+ bc - a-b-c-d + 6
\end{equation}
or, equivalently,
\begin{equation}
    (a-1)(c-1) + (b-2)(d-2)+1 = ac - a - c  + bd - 2b - 2d + 6 \leq 0
\end{equation}
We already know (see \eqref{eq:alphabetabounds}) that $a,b,c,d \geq 1$. If, in fact, $b,d \geq 2$ we clearly get a contradiction and the proof is finished.

Thus the only case that is left to consider is when $b=1$ or $d=1$, that is $\alpha = \delta_1-1$ or $\beta=\gamma_1-1$. However, in these cases we know slightly stronger conditions from Lemma~\ref{lem:alpha+1}. Suppose $b=1$ but $d\geq 2$. Then
\begin{equation}
    \alpha(\gamma_1-\beta) \geq \delta_2(m-\beta)+1  \quad \Leftrightarrow \quad ad = [\mathbf{B}] \geq [\mathbf{A}]+1.
\end{equation}
Using this in the preceding argument in place of the first inequality in \eqref{eq:must2} we arrive at
\begin{equation}
        (a-1)(c-1) + (b-1)(d-2)+3 = ac - a - c  + bd - 2b - d + 6 \leq 0
\end{equation}
which is still a contradiction as now $d \geq 2.$ The cases when $d=1, b\geq 2$ and $d=b=1$ can be handled in a similar fashion, using the respective stronger conditions from Lemma~\ref{lem:alpha+1}.
\end{proof}

Next we extend (PS2) considerably: instead of proving that $\varphi_2$ in \eqref{eq:PB3} provides a fully graphic simple region, we show that the following larger region is also fully graphic.
\begin{theorem}\label{th:fully2}
The simple region $\fB[\varphi_{2'}]$, where
\begin{equation}\label{eq:biproot2}
\varphi_{2'} \quad \equiv \quad  (\gamma_1+1) \cdot (\delta_1+1) \le \Sigma
\end{equation}
is fully graphic.
\end{theorem}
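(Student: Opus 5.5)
Rather than going through the LEG characterization of Theorem~\ref{th:DI-LEG}, I would verify the Gale--Ryser inequalities of Theorem~\ref{th:ZZ1} directly for an arbitrary member of the region. The lower bounds $\gamma_2,\delta_2$ should play no role at all. Fix $(C,D)$ in $\fB[\varphi_{2'}]$ and relabel so that $C$ is non-increasing; this does not affect graphicality. All we use is
\[
c_i\le\gamma_1,\qquad d_j\le\delta_1,\qquad \sum_i c_i=\sum_j d_j=\Sigma\ge(\gamma_1+1)(\delta_1+1).
\]
We must show $\sum_{i=1}^k c_i\le\sum_{j=1}^m\min(d_j,k)$ for every $k=1,\dots,n$.

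I would split into two cases according to $k$. If $k\ge\delta_1$, then $\min(d_j,k)=d_j$ for every $j$, so the right-hand side equals $\Sigma$. The left-hand side is a partial sum of $C$ and hence at most $\Sigma$, so the inequality is trivial.

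If $k<\delta_1$, the left-hand side is at most $k\gamma_1$. For the right-hand side I would use the elementary concavity-type bound: for $0\le d\le\delta_1$ and $k\le\delta_1$,
\[
\min(d,k)\ge \frac{k}{\delta_1}\,d .
\]
If $d\le k$ this holds because $k/\delta_1\le1$; if $d>k$ it holds because $d/\delta_1\le1$. Summing over $j$ gives
\[
\sum_{j}\min(d_j,k)\ge\frac{k}{\delta_1}\,\Sigma\ge \frac{k(\gamma_1+1)(\delta_1+1)}{\delta_1}>k\gamma_1\ge\sum_{i\le k}c_i .
\]
Hence every inequality in \eqref{eq:ZZ} holds, $(C,D)$ is graphic, and since $(C,D)$ was arbitrary the region is fully graphic.

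I do not expect a real obstacle. The only step needing care is the bound $\min(d,k)\ge kd/\delta_1$, which is what converts the hypothesis on $\Sigma$ into the Gale--Ryser inequality. The argument shows that $\Sigma\ge\gamma_1\delta_1$ already suffices, so \eqref{eq:biproot2} holds with room to spare, and it clearly contains the region (PS2). As a cross-check one could instead plug the LEG sequence into \eqref{eq:alap1}--\eqref{eq:alap2}, but the direct route avoids the case analysis on $\alpha,\beta$.
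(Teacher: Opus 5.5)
Your proof is correct, and it takes a different, more elementary route than the paper.

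\textbf{The paper's route.} It first uses Theorem~\ref{th:DI-LEG}, which rests on the hinge-flip smoothing lemmas, to reduce everything to graphicality of the single sequence $\lbb(\Sigma;n,\Gamma;m,\Delta)$. Then, when $\alpha<\delta_1$, it uses the area decomposition of Figure~\ref{fig1} to rewrite $\Sigma-(\gamma_1+1)(\delta_1+1)\ge 0$ as $[\mathbf D]\ge[\mathbf C]+(\delta_1-\alpha)$. This is exactly condition \eqref{eq:strong2b} of Lemma~\ref{lem:alpha+1}, which is itself proved by checking the two break indices of $\bar C$ via Theorem~\ref{th:ZZ2}.

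\textbf{Your route.} You bypass all of this. You verify the inequalities of Theorem~\ref{th:ZZ1} for every member of the region at once, using only $c_i\le\gamma_1$, $d_j\le\delta_1$ and the pointwise bound $\min(d,k)\ge kd/\delta_1$ for $k\le\delta_1$. This needs no information about $\alpha,\beta,\gamma_2,\delta_2$, and it gives the stronger threshold $\Sigma\ge\gamma_1\delta_1$. Only this stronger form literally contains (PS2) as written: $\gamma_1=\delta_1=2$, $\Sigma=8$ satisfies \eqref{eq:PB3} but violates \eqref{eq:biproot2}. The same averaging also carries over to Berger's inequality, since the correction term in Remark~\ref{rem:berger_order} is at most $k$; so $\Sigma\ge(\gamma_1+1)\delta_1$ would suffice in the directed setting.

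\textbf{What the paper's route buys.} Uniformity: Theorem~\ref{th:fully2} becomes one more instance of the LEG and Lemma~\ref{lem:alpha+1} machinery behind Theorems~\ref{th:b-fully1} and~\ref{th:fully5}. The hypotheses of those theorems depend essentially on the lower bounds $\gamma_2,\delta_2$ and cover small values of $\Sigma$, where a pure averaging bound like yours is not available.
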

\begin{proof} It is enough again to prove that the LEG is graphic. If $\alpha \geq \delta_1$ this holds automatically by Lemma~\ref{lem:alpha+1}. Otherwise, using the notation of Figure~\ref{fig1} we can see that
\begin{multline}
    0 \leq \Sigma - (\delta_1+1)(\gamma_1+1) = [\mathbf D] - [\mathbf C] - [\mathbf T_4] + (\gamma-\gamma_2) -\gamma_1- \delta_1+1 \\ \leq [\mathbf D] - [\mathbf C] - [\mathbf T_4] - (\delta_1-\alpha).
\end{multline}
Hence $[\mathbf D] \geq [\mathbf C] + (\delta_1-\alpha)$ which is equivalent to \eqref{eq:strong2b}, so by Lemma~\ref{lem:alpha+1} the region is fully graphic.
\end{proof}

A slight drawback of Lemma~\ref{lem:alpha+1} is that the condition involves $\alpha$ and $\beta$ which are implicitly determined by the other parameters. In what follows we present a sufficient condition for a simple region to be fully graphic that is based purely on its defining parameters. It is analogous to Jerrum, McKay and Sinclair second P-stability theorem in \cite{JMS92}, which also can be found as \cite[(P2)]{fully}.
\begin{theorem}\label{th:fully5}
The simple region $\fB(\Sigma;n, \Gamma; m, \Delta)$ is fully graphic if
\begin{multline}\label{eq:simplebipart}
\left(\Sigma - m\delta_2 - (\gamma_2-1)(\delta_1-\delta_2)\right)\left(n \gamma_1 - \Sigma - (n-\delta_1-1)(\gamma_1-\gamma_2)\right) \\ \leq \gamma_2 (n-\delta_1)(\delta_1-\delta_2) (\gamma_1-\gamma_2)
\end{multline}
\end{theorem}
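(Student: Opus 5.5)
The plan is to reduce the statement to the second alternative of Lemma~\ref{lem:alpha+1}, namely inequality \eqref{eq:strong2b},
\[
(\beta+1)(\delta_1-\alpha)\le\gamma_2(n-\alpha).
\]
By Theorem~\ref{th:DI-LEG} (with $\F=\emptyset$) it suffices to show that the LEG of the region is graphic. We first dispose of the degenerate parameter ranges. If $\alpha\ge\delta_1$, the LEG is graphic, as shown at the start of the proof of Lemma~\ref{lem:alpha+1}. If $\beta<\gamma_2$, or more generally if $\beta$ lies outside the range \eqref{eq:alfa1}, it is graphic by Remark~\ref{th:alfa}. So we may assume $\alpha<\delta_1$ and $\beta\ge\gamma_2$.

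\textbf{Step 1: estimate the two factors of \eqref{eq:simplebipart} through $\alpha$ and $\beta$.} Using \eqref{eq:alfa}, we have $\beta\le(\Sigma-m\delta_2)/(\delta_1-\delta_2)$, so
\[
\Sigma-m\delta_2-(\gamma_2-1)(\delta_1-\delta_2)\ \ge\ (\delta_1-\delta_2)(\beta-\gamma_2+1)\ >\ 0 .
\]
Likewise $(\Sigma-n\gamma_2)/(\gamma_1-\gamma_2)<\alpha+1$, which gives
\[
\frac{n\gamma_1-\Sigma}{\gamma_1-\gamma_2}=n-\frac{\Sigma-n\gamma_2}{\gamma_1-\gamma_2}>n-\alpha-1 .
\]
Hence the second factor satisfies
\[
n\gamma_1-\Sigma-(n-\delta_1-1)(\gamma_1-\gamma_2)\ >\ (\gamma_1-\gamma_2)(\delta_1-\alpha)\ >\ 0 .
\]
Both lower bounds are positive, so we may multiply them. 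Substituting into \eqref{eq:simplebipart} and cancelling the common positive factor $(\delta_1-\delta_2)(\gamma_1-\gamma_2)$ gives
\[
(\beta-\gamma_2+1)(\delta_1-\alpha)\ \le\ \gamma_2(n-\delta_1).
\]
The inequality is in fact strict, but only the weak form is needed.

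\textbf{Step 2: conclude via Lemma~\ref{lem:alpha+1}.} Rewrite the right-hand side of \eqref{eq:strong2b} as
\[
\gamma_2(n-\alpha)=\gamma_2(n-\delta_1)+\gamma_2(\delta_1-\alpha).
\]
Then \eqref{eq:strong2b} is exactly equivalent to $(\beta+1-\gamma_2)(\delta_1-\alpha)\le\gamma_2(n-\delta_1)$, which is what Step 1 produced. Therefore Lemma~\ref{lem:alpha+1} shows that $\fB(\Sigma;n,\Gamma;m,\Delta)$ is fully graphic.

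The main obstacle is bookkeeping rather than ideas. The two floor estimates point in opposite directions: an upper bound on $\beta$ and a lower bound on $n-\alpha-1$. The sign conditions must be secured before multiplying the inequalities, which is why the degenerate cases $\alpha\ge\delta_1$ and $\beta<\gamma_2$ have to be removed first. One must also check that strictness is not lost where it matters. Since only the weak inequality is needed at the end, Step 1 retains enough slack for this.
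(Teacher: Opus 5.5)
Your proof is correct and follows the paper's argument essentially step for step. You use the same two lower bounds on the factors of \eqref{eq:simplebipart}; the paper writes them as exact identities with slack terms $\delta-\delta_2$ and $\gamma_1-\gamma$ from \eqref{eq:alfa2}, while you read them off the floors directly. Then, as in the paper, you cancel $(\delta_1-\delta_2)(\gamma_1-\gamma_2)$ and add $\gamma_2(\delta_1-\alpha)$ to reach \eqref{eq:strong2b}. For the case $\beta<\gamma_2$ you cite Remark~\ref{th:alfa}, which is, if anything, a more accurate reference than the paper's appeal to Lemma~\ref{lem:alpha+1} there.
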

\begin{proof}
If $\beta < \gamma_2$ or $\alpha \geq \delta_1$ then Lemma~\ref{lem:alpha+1} already implies that the region is fully graphic. Otherwise, from \eqref{eq:alfa} and \eqref{eq:alfa2} we have
\begin{multline*}
    \Sigma - m \delta_2 - (\gamma_2-1)(\delta_1-\delta_2) = (\delta_1-\delta_2)(\beta) + (\delta-\delta_2) - (\gamma_2-1)(\delta_1-\delta_2) \\ = (\delta_1-\delta_2)(\beta+1-\gamma_2) +(\delta-\delta_2) \geq (\beta-\gamma_2+1)(\delta_1-\delta_2)
\end{multline*}
    and
\begin{multline*}
    n\gamma_1 - \Sigma  - (n-\delta_1-1)(\gamma_1-\gamma_2) = (n-\alpha-1)(\gamma_1-\gamma_2) + (\gamma_1-\gamma) -(n-\delta_1-1)(\gamma_1-\gamma_2) \\= (\delta_1-\alpha)(\gamma_1-\gamma_2) + (\gamma_1-\gamma) \geq (\delta_1-\alpha)(\gamma_1-\gamma_2)
\end{multline*}
Combining the product of these two inequalities with the assumption of the theorem yields
\[ (\delta_1-\alpha)(\beta-\gamma_2+1)(\delta_1-\delta_2)(\gamma_1-\gamma_2) \leq \gamma_2(n-\delta_1)(\delta_1-\delta_2)(\gamma_1-\gamma_2),\] hence
\[ (\delta_1-\alpha)(\beta-\gamma_2+1) \leq \gamma_2(n-\delta_1).\]
Finally, adding $(\delta_1-\alpha)\gamma_2$ to both sides yields
\begin{align*}
(\beta+1)(\delta_1-\alpha) &\le  (n-\alpha) \gamma_2.
\end{align*}
Hence, by Lemma~\ref{lem:alpha+1}, the simple region $\fB(\Sigma;n, \Gamma; m, \Delta)$ is fully graphic.
\end{proof}

Viewing \eqref{eq:simplebipart} as a quadratic inequality in $\Sigma$, when its discriminant is negative we can conclude that the region $\fB(\Sigma;n, \Gamma; m, \Delta)$ is fully graphic for all values of $\Sigma$, hence the very simple region $\fB(n, \Gamma; m, \Delta)$ is fully graphic. This leads to a strengthening of Theorem~\ref{th:b-fully1}:

\begin{theorem}\label{cor:verysimple_fg}
The very simple region $\fB(n, \Gamma; m, \Delta)$ is fully graphic when $(m-\gamma_1)\delta_2\ge (\gamma_1-\gamma_2) + (\delta_1-\delta_2)$ and
\[ \left(\sqrt{[\mathbf A] - x-y} - \sqrt{[\mathbf D] }\right)^2 \leq xy \leq \left(\sqrt{[\mathbf A] -x-y} + \sqrt{[\mathbf D]}\right)^2,  \]
where $x= \delta_1-\delta_2$ and $y=\gamma_1-\gamma_2$ and $[\mathbf A]$ and $[\mathbf D]$ refer the respective areas in Figure~\ref{fig1}. Note that $xy = [\mathbf B]+[\mathbf C]+[\mathbf T_3]+[\mathbf T_4]$ in terms of areas in the figure.
\end{theorem}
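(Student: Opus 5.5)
The plan is to apply Theorem~\ref{th:fully5} to every admissible $\Sigma$ at once. The very simple region $\fB(n,\Gamma;m,\Delta)$ is the union of the simple regions $\fB(\Sigma;n,\Gamma;m,\Delta)$. So it suffices to show that the hypotheses of the present theorem force inequality \eqref{eq:simplebipart} to hold for \emph{every} real $\Sigma$, and then invoke Theorem~\ref{th:fully5} for each feasible $\Sigma$ separately.

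First I rewrite \eqref{eq:simplebipart} as a statement about a quadratic in $\Sigma$. Put $x=\delta_1-\delta_2$, $y=\gamma_1-\gamma_2$, and
\[
P=m\delta_2+(\gamma_2-1)x,\qquad Q=n\gamma_1-(n-\delta_1-1)y .
\]
The left-hand side of \eqref{eq:simplebipart} is then $(\Sigma-P)(Q-\Sigma)$. This quadratic is maximized at $\Sigma=(P+Q)/2$, with maximum value $\bigl((Q-P)/2\bigr)^2$, regardless of the sign of $Q-P$. From Figure~\ref{fig1} I read off $[\mathbf D]=\gamma_2(n-\delta_1)$ and $[\mathbf A]=\delta_2(m-\gamma_1)$, so the right-hand side of \eqref{eq:simplebipart} is $[\mathbf D]\,xy$. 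Hence \eqref{eq:simplebipart} holds for all $\Sigma$ as soon as
\[
(Q-P)^2\le 4\,[\mathbf D]\,xy .
\]
This is exactly the negative-discriminant condition mentioned before the theorem.

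Next I compute $Q-P$ in terms of the areas. Writing $n\gamma_1=n\gamma_2+ny$, using $n\gamma_2-\gamma_2x=[\mathbf D]+\gamma_2\delta_2$ and $m\delta_2=[\mathbf A]+\gamma_1\delta_2$, and collecting terms, I expect to obtain
\[
Q-P=[\mathbf D]-[\mathbf A]+xy+x+y=[\mathbf D]-A'+xy,\qquad A':=[\mathbf A]-x-y .
\]
This step is routine but is the one where a bookkeeping slip is most likely, so I would verify it carefully.

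The first hypothesis, $(m-\gamma_1)\delta_2\ge x+y$, says precisely that $A'\ge 0$, so $\sqrt{A'}$ makes sense. The sufficient condition becomes $\bigl|xy+[\mathbf D]-A'\bigr|\le 2\sqrt{[\mathbf D]\,xy}$, which is equivalent to
\[
\bigl(\sqrt{xy}-\sqrt{[\mathbf D]}\bigr)^2\le A'\le\bigl(\sqrt{xy}+\sqrt{[\mathbf D]}\bigr)^2 .
\]
Both this and the displayed hypothesis of the theorem are forms of the symmetric triangle-type condition $|\sqrt a-\sqrt b|\le\sqrt c\le\sqrt a+\sqrt b$ on the three nonnegative numbers $A'$, $[\mathbf D]$, $xy$. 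Expanding either one gives $(a+b-c)^2\le 4ab$, which is symmetric under permuting $a,b,c$. The key step is this algebraic equivalence, which lets me convert the stated bounds on $xy$ into the bound on $Q-P$.

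With that equivalence, \eqref{eq:simplebipart} holds for every $\Sigma$. Theorem~\ref{th:fully5} then shows that each simple region $\fB(\Sigma;n,\Gamma;m,\Delta)$ is fully graphic, and therefore so is their union $\fB(n,\Gamma;m,\Delta)$.
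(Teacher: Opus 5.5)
Your proof is correct and follows the paper's argument. Both write \eqref{eq:simplebipart} as $(\Sigma-A)(B-\Sigma)\le C$ and note it holds for every $\Sigma$ once the discriminant condition $(B-A)^2\le 4C$ holds, where $B-A=[\mathbf D]-([\mathbf A]-x-y)+xy$ and $C=[\mathbf D]\,xy$; both then apply Theorem~\ref{th:fully5} to each feasible $\Sigma$. The only difference is cosmetic: the paper solves the resulting quadratic in $xy$ for its two roots, whereas you reach the stated bounds through the symmetry of $a^2+b^2+c^2-2ab-2bc-2ca\le 0$, which is equally valid given $[\mathbf A]-x-y\ge 0$.
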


\begin{proof}

The quadratic inequality \eqref{eq:simplebipart} takes the form $(\Sigma-A)( B-\Sigma)\le C$ where \[A = m\delta_2 + (\gamma_2-1)(\delta_1-\delta_2), \qquad B = n\gamma_1 - (n-\delta_1-1)(\gamma_1-\gamma_2), \] and
\[C = \gamma_2(n-\delta_1)(\delta_1-\delta_2)(\gamma_1-\gamma_2).\]
Hence the discriminant is $(A-B)^2-4C$. To understand when this is non-positive, it is helpful to think of it as a quadratic expression in $P = (\delta_1-\delta_2)(\gamma_1-\gamma_2)$. A simple calculation shows that $A-B = K - P$ where $K= (m-\gamma_1)\delta_2 - (n-\delta_1)\gamma_2 - (\delta_1-\delta_2) - (\gamma_1-\gamma_2)$, and $C = (n-\delta_1)\gamma_2 P$. Thus the discriminant is non-positive if and only if $(K-P)^2 \leq 4(n-\delta_1)\gamma_2 P$, or equivalently, when
\[ P^2 - 2(K+2(n-\delta_1)\gamma_2)P + K^2 \leq 0.\] The roots of this equation are
\[ P = \left(\sqrt{K+(n-\delta_1)\gamma_2} \pm \sqrt{(n-\delta_1)\gamma_2 }\right)^2 \] hence the original discriminant is non-positive when
\[ \left(\sqrt{K+(n-\delta_1)\gamma_2} - \sqrt{(n-\delta_1)\gamma_2 }\right)^2 \leq P \leq \left(\sqrt{K+(n-\delta_1)\gamma_2} + \sqrt{(n-\delta_1)\gamma_2 }\right)^2. \]
Finally, substituting back $K +(n-\delta_1)\gamma_2 = (m-\gamma_1)\delta_2 - (\delta_1-\delta_2) - (\gamma_1-\gamma_2)$ and $P= (\delta_1-\delta_2)(\gamma_1-\gamma_2)$ we get the claim.
\end{proof}

\subsection{Fully graphic directed regions}\label{sec:fully-direct}
\noindent
Let's recall the very simple region $\varphi_3[C,D,\F]$  from formula \eqref{eq:PD1}:
\[\varphi_3[(C,D,\F)] \equiv (\gamma_1- \gamma_2)\cdot (\delta_1-\delta_2) \le (2 - |X|) + \max
\left\{
  \begin{array}{ll}
   \gamma_2(|X|-\delta_1-1)+\delta_2+\gamma_1, & \\
    \delta_2(|X|-\gamma_1-1)+\gamma_2+\delta_1. &
  \end{array}
\right.
\]
This inequality does not take into consideration that the actual class sizes  in the $\F$-free bipartite representation are $n$ and $m$, and both $\le |X|.$ Fortunately inequality \eqref{eq:PD1} in (PS3) provides a direct proof, that the following, slightly transformed formula
\begin{equation}\label{eq:alakit}
\varphi_3^\prime[(C,D,\F)] \equiv (\gamma_1- \gamma_2)\cdot (\delta_1-\delta_2) \le 2 + \max
\left\{
  \begin{array}{ll}
   \gamma_2(n-\delta_1-1)+\delta_2+\gamma_1 -m, & \\
    \delta_2(m-\gamma_1-1)+\gamma_2+\delta_1 -n. &
  \end{array}
\right.
\end{equation}
also determines a P-stable infinite $\F$-free bidegree set. For this purpose it is enough to recognize that since $|X|=n+m-|\F|$ therefore the RHS of \eqref{eq:alakit} is smaller than the RHS of \eqref{eq:PD1}. Each degree  sequence satisfying \eqref{eq:alakit} also satisfies \eqref{eq:PD1}. Of course, this transformed formula provides a slightly weaker result than (PS3). \qed

\begin{example} One would think, based on the previous examples of the simple and bipartite versions, that $\varphi_3[(C,D,\F)]$ is a fully graphic region and hence its $P$-stability follows from our general theorem. However, quite surprisingly, the very simple region defined by \eqref{eq:alakit} is not fully graphic. Let $\gamma_1=\delta_1 = n-1=m-1$, and set $\gamma_2=\delta_2 = n-2k-2$ for some $k$ to be determined later. Then it is easy to see, that the directed degree sequence $(C_1,D_1)$ where
\begin{equation*}	
        C_1=D_1= (\underbrace{n-1,\ldots,n-1}_{n-2k-1}, \underbrace{n-2k-2, \ldots, n-2k-2}_{2k+1})
\end{equation*}
is non-graphic,  whereas the sequence $(C_2,D_2)$ where
\begin{equation*}	
C_2=D_2=(\underbrace{n-1,\ldots,n-1}_{n-4k-1}, \underbrace{n-2,\ldots,n-2}_{2k},\underbrace{n-2k-1, \ldots, n-2k-1}_{2k},n-2k-2)
\end{equation*}
is graphic. Indeed, this second claim is easily verified using Theorem~\ref{th:berger} or by simply constructing a graph with the given (or even easier, the complement of the given) directed degree sequence. Clearly, $(C_1,D_1)$ and $(C_2,D_2)$ are in the same simple region, as they have the same total edge count and same max/min degrees.

However, when $k$ is small enough (specifically when $(2k+1)^2 \leq n-2k-1$) then this region satisfies \eqref{eq:alakit} hence it is $P$-stable, while containing both graphic and not graphic sequences.
\end{example}

\noindent We can prove the following result:
\begin{theorem}\label{th:ELP}
Let \begin{equation}\label{eq:PD3'}
\varphi_{3}''[C,D,\F] \equiv (\gamma_1- \gamma_2)\cdot (\delta_1-\delta_2) \le  \gamma_2(n-\delta_1-1) + \delta_2(m-\gamma_1-1) -3
\end{equation}
then the very simple directed region $\fD[\varphi_3'']$ is fully graphic.
\end{theorem}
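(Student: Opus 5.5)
The plan is to mirror the proof of Theorem~\ref{th:b-fully1}, replacing Gale--Ryser by Berger's inequalities. By Theorem~\ref{th:DI-LEG} it suffices to show that for every feasible $\Sigma$ and every $f$, the LEG triple $(\bar C,\bar D,\F_0)$ is graphic. Since $\F_0=\{u_iw_i: i\le f\}$ pairs equal subscripts, Remark~\ref{th:Berg} lets us apply Berger's theorem directly. Moreover, by Remark~\ref{rem:berger_order} the worst case is $f=\min(n,m)$, where every index $k$ loses the correction term $|\{i\le k: d_i\ge k\}|$. So I would first prove a directed analogue of Lemma~\ref{lem:alpha+1}. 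The only drop indices of $\bar C$ are $k=\alpha$ and $k=\alpha+1$, plus $k=n$, which is automatic. Each Berger inequality is the Gale--Ryser inequality with $\min(d_i,k)$ replaced by $\min(d_i,k-1)$ for $i\le k$. For $k=\alpha<\delta_1$ this costs exactly $\min(\alpha,\beta)$ units, compared with the bipartite inequality \eqref{eq:bounded1}.

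The resulting sufficient conditions should look like
\[
(\alpha+1)(\gamma_1-\beta)+\beta+1\le \delta_2(m-\beta-1)
\quad\text{or}\quad
(\beta+1)(\delta_1-\alpha)+\alpha+1\le \gamma_2(n-\alpha-1),
\]
obtained by crude bounds $\min(\cdot,\cdot)\ge\delta_2-1$ and so on. Alongside these I would record the improved versions when $\alpha=\delta_1-1$ or $\beta=\gamma_1-1$: there the inequality at $k=\alpha+1$ becomes automatic, because its right-hand side is $\Sigma$ minus the loop correction, and that correction must be checked separately. I would also treat the automatic cases $\alpha\ge\delta_1$ and $\alpha<\delta_2$ (and symmetrically for $\beta$) as in Remark~\ref{th:alfa}. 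This leaves the range \eqref{eq:alfa1}, with strict inequalities once the boundary cases are handled.

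Next, I would argue by contradiction. Suppose both sufficient conditions fail. Set $a=\alpha-\delta_2$, $b=\delta_1-\alpha$, $c=\beta-\gamma_2$, $d=\gamma_1-\beta$, and use the area decomposition of Figure~\ref{fig1}, with $[\mathbf A]$ and $[\mathbf D]$ now shrunk to $\delta_2(m-\gamma_1-1)$ and $\gamma_2(n-\delta_1-1)$. The two failures then read roughly $ad+d+\text{(small)}\ge[\mathbf A]+1$ and $bc+b+\text{(small)}\ge[\mathbf D]+1$. The hypothesis \eqref{eq:PD3'} reads $(a+b)(c+d)\le[\mathbf A]+[\mathbf D]-3$; note it has no $-1$ shifts, unlike \eqref{eq:PB2'}, which is exactly the slack that absorbs the extra Berger correction terms. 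Adding the two failures and subtracting gives an inequality of the form $ac+bd-(\text{linear terms})+\text{const}\le 0$. This should factor as a sum like $(a-1)(c-1)+(b-1)(d-1)+\text{positive}\le0$, a contradiction since $a,b,c,d\ge1$.

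The main obstacle is the bookkeeping of the loop corrections $|\{i\le k: d_i\ge k\}|$ at $k=\alpha$ and $k=\alpha+1$, including the middle entries $\gamma$ and $\delta$. The directed sufficient conditions must be sharp enough that the final quadratic inequality closes with the constant $-3$. I expect the boundary cases $b=1$ or $d=1$ to need the special improved inequalities, exactly as in the proof of Theorem~\ref{th:b-fully1}. I would first try to verify the $b=d=1$ corner by direct substitution, and adjust the linear slack terms in the directed lemma until that corner works.
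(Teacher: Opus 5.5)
You have the same architecture as the paper: reduce to the LEG with $f=\min(n,m)$, prove a directed analogue of Lemma~\ref{lem:alpha+1}, then derive a contradiction in $a,b,c,d$. The gap is at the step you yourself flag as the main obstacle: the sufficient conditions you display are too lossy for the contradiction to close.

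Substitute $\alpha=\delta_2+a$, $\beta=\gamma_2+c$, $\gamma_1=\beta+d$, $\delta_1=\alpha+b$ into the failure of your first condition,
\[
(\alpha+1)(\gamma_1-\beta)+\beta+1\ge\delta_2(m-\beta-1)+1 .
\]
It becomes
\[
(a+1)d+c+\gamma_2\ \ge\ \delta_2(m-\gamma_1-1).
\]
So your ``(small)'' term is $c+\gamma_2$, and symmetrically the other failure carries $a+\delta_2$. Adding the two and comparing with \eqref{eq:PD3'} gives only
\[
(a-1)(c-1)+(b-1)(d-1)+1\le\gamma_2+\delta_2 ,
\]
which is no contradiction (for instance $a=b=c=d=1$). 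The culprit is the right-hand side $\delta_2(m-\beta-1)$. It discards the contribution $\min\{\delta,k\}\ge\delta_2$ of the middle entry of $\bar D$, and symmetrically a $\gamma_2$ on the other side. These two losses survive as exactly the $\gamma_2+\delta_2$ above. No tuning of linear slack in $a,b,c,d$, and no special handling of the corners $b=1$, $d=1$, can absorb an error of that size.

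The missing observation is that the loop correction $|\{i\le\min(k,f):\bar d_i\ge k\}|$ at $k\in\{\alpha,\alpha+1\}$ is at most $k\le\alpha+1$. Keep the full $\delta_2(m-\beta)$ on the right. This gives the paper's condition
\[
(\alpha+1)(\gamma_1-\beta+1)\le\delta_2(m-\beta),
\]
and its mirror. Since $\alpha+1=\delta_2+a+1$, the failure of this condition reads exactly
\[
(a+1)(d+1)\ge\delta_2(m-\gamma_1-1)+1 .
\]
The $\delta_2$ hidden in the correction is precisely what the $-1$ in $m-\gamma_1-1$ of \eqref{eq:PD3'} absorbs. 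Summing with $(b+1)(c+1)\ge\gamma_2(n-\delta_1-1)+1$ and using \eqref{eq:PD3'} yields $(a-1)(c-1)+(b-1)(d-1)+1\le 0$, which is impossible for $a,b,c,d\ge1$. (Keeping your correction $\beta+1$ but with right-hand side $\delta_2(m-\beta)$ would also close, because the $\gamma_2$ and $\delta_2$ then cancel between the two failures.)

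With these conditions, the genuinely special cases are $a=0$ or $b=0$ (and $c=0$, $d=0$), not $b=1$ or $d=1$. In particular, $\alpha=\delta_1$ is \emph{not} automatic in the directed setting (see the example in Remark~\ref{th:alfa}), contrary to your listing of $\alpha\ge\delta_1$ among the automatic cases. Both $\alpha=\delta_1$ and $\alpha=\delta_2$ need the separate inequalities given in Lemma~\ref{lem:DILEG}.
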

\begin{remark}
It is easy to see that for a large set of parameter choices the region defined by $\varphi_3''$  contains the region defined by $\varphi_3'.$ This
relation is false for (casually speaking) very small $\min\{\delta_2, \gamma_2\}$ or very large $\max\{\delta_1,\gamma_1\}$ parameter pairs. We leave the details for the diligent readers.
\end{remark}

We begin by stating and proving an analogue of Lemma~\ref{lem:alpha+1} for the directed case.

\begin{lemma}\label{lem:DILEG}
Using the notation of Definition~\ref{def:LEG}  if either
\begin{equation}\label{eq:Dstrong2}
(\alpha+1)(\gamma_1 - \beta+1) \le  \delta_2(m-\beta)
\end{equation}
or
\begin{equation}\label{eq:Dstrong2b}
(\beta+1)(\delta_1 - \alpha+1)  \le  \gamma_2(n-\alpha)
\end{equation}
hold, then   $\fD(\Sigma; n,\Gamma, m, \Delta; f)$ is fully graphic.

Furthermore, in the special case of $\alpha = \delta_1$  (respectively $\beta = \gamma_1$) the slightly weaker inequality
\begin{equation}\label{eq:Dweakeralpha}
    \alpha(\gamma_1-\beta+1) \leq \delta_2(m-\beta)
\end{equation}
respectively
\begin{equation}
 \beta(\delta_1-\alpha+1) \leq \gamma_2(n-\alpha)
\end{equation}
already implies that  $\fD(\Sigma; n,\Gamma, m, \Delta; f)$ is fully graphic.

Finally, in the special case of $\alpha = \delta_2$ (respectively $\beta= \gamma_2$) the following inequality is sufficient to prove graphicality:
\begin{equation}\label{eq:Dweakeralpha2}
    \gamma \leq \delta_2(m-\gamma_1) + \max\{\beta-\alpha-1,0\}
\end{equation}
respectively
\begin{equation}
\delta \leq \gamma_2(n-\delta_1) + \max\{\alpha-\beta-1,0\}
\end{equation}
\end{lemma}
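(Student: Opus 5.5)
By Theorem~\ref{th:DI-LEG} it suffices to show that the triple $\lbd(\Sigma;n,\Gamma;m,\Delta;f)=(\bar C,\bar D,\F_0)$ is graphic. Since $\F_0$ pairs $u_i$ with $w_i$ for $i\le f$, Remark~\ref{th:Berg} lets us apply Berger's Theorem~\ref{th:berger} to the associated directed sequence. By Remark~\ref{rem:berger_order}, the worst case for the correction term $|\{r\le k: b_r\ge k\}|$ is when every paired vertex among the first $k$ has large in-degree. We therefore bound the left side of \eqref{eq:Berger} from below by
\[
\sum_{j=1}^m\min(\bar d_j,k)-\min\{k,f,q_k\},
\]
and then simply by $\sum_j \min(\bar d_j,k)-\min\{k,\beta+1\}$, since only the first $\beta+1$ entries of $\bar D$ can exceed $\delta_2$. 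By Theorem~\ref{th:ZZ2}, adapted through Berger's drop-index version, only $k=\alpha$, $k=\alpha+1$ and $k=n$ need checking. At $k=n$ both sides equal $\Sigma$ because all in-degrees are at most $n-1$.

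For $k=\alpha$ we follow the proof of Lemma~\ref{lem:alpha+1}. If $\alpha\ge\delta_1+1$, the right side is already $\Sigma$ minus at most the correction, and this case is handled as automatic, as in Remark~\ref{th:alfa}. Otherwise we must show
\[
\alpha\gamma_1\le \beta\min(\alpha,\delta_1)+\min\{\alpha,\delta\}+\delta_2(m-\beta-1)-\min\{\alpha,\beta+1\}.
\]
For $\alpha\le\delta_1$ this reduces to $\alpha\gamma_1\le\alpha\beta+\delta_2+\delta_2(m-\beta-1)-\alpha$, that is, $\alpha(\gamma_1-\beta+1)\le\delta_2(m-\beta)$. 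This follows from \eqref{eq:Dstrong2}. For $k=\alpha+1$ the extra left-hand term $\gamma\le\gamma_1$ is absorbed, exactly as in \eqref{eq:bounded2b}, using the extra $+1$ factor in $(\alpha+1)$. The condition \eqref{eq:Dstrong2b} follows by the symmetry of $U$ and $W$, in the same way as in Lemma~\ref{lem:alpha+1}.

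For the special case $\alpha=\delta_1$, the inequality at $k=\alpha+1=\delta_1+1$ has right side $\Sigma$ minus the correction $|\{r\le k:b_r\ge k\}|$. This correction is zero because all $b_r\le\delta_1<k$, so only $k=\alpha$ remains, and \eqref{eq:Dweakeralpha} suffices. For $\alpha=\delta_2$, we have $\min(\bar d_j,\alpha)=\delta_2$ for every $j$. The $k=\alpha$ inequality becomes $\alpha\gamma_1\le m\delta_2-(\text{correction})$, and the correction counts paired indices $i\le\alpha$ with $\bar d_i\ge\alpha$, which is all of them. The $k=\alpha+1$ inequality has the gain $\beta$ (or $\beta+1$) from large in-degrees, less the correction on indices up to $\alpha+1$ with $\bar d_r\ge\alpha+1$. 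That correction is at most $\min(\alpha+1,\beta+1)$, which yields the term $\max\{\beta-\alpha-1,0\}$. Rearranging with $\Sigma$ should then give \eqref{eq:Dweakeralpha2}.

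The main obstacle is the bookkeeping of Berger's correction term in these boundary cases. I will first write out the correction exactly for the canonical matching $\F_0$ with $f=\min(n,m)$, which is the worst case by Lemma~\ref{lem:canonical-matching}, and then verify that each stated inequality dominates it.
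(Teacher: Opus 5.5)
Your route is the paper's. You reduce to graphicality of the LEG via Theorem~\ref{th:DI-LEG} and check Berger's inequalities only at $k=\alpha$ and $k=\alpha+1$. You bound the correction $|\{r\le k: b_r\ge k\}|$ by $k$, so the two inequalities become $\alpha(\gamma_1-\beta+1)\le\delta_2(m-\beta)$ and $(\alpha+1)(\gamma_1-\beta+1)\le\delta_2(m-\beta)$. You then note that the $k=\alpha+1$ inequality is automatic when $\alpha=\delta_1$, and compute directly when $\alpha=\delta_2$. The main case and the $\alpha=\delta_1$ case are fine. However, some of your bookkeeping is wrong as written, and in the $\alpha=\delta_2$ case it breaks the derivation.

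First, the bound of the correction by $\min\{k,\beta+1\}$ is false for $k\le\delta_2$. Then every $\bar d_j\ge k$, so $q_k=m$ and the correction is $\min\{k,f\}$. This exceeds $\beta+1$ whenever $\beta+1<k\le f$, which happens for instance at $k=\alpha=\delta_2$ with $\beta=0$. In the main case you only use the bound $k$, as the paper does, so simply drop the refinement. Relatedly, your reduction uses $\min\{\bar d_j,\alpha\}=\delta_2$ for $j>\beta+1$ and $\min\{\alpha,\delta\}\ge\delta_2$, both of which need $\alpha\ge\delta_2$. So the case $\alpha<\delta_2$ must be dispatched separately: both right-hand sides are then at least $(m-1)k\ge\gamma_1 k$, as in Remark~\ref{th:alfa}.

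Second, in the case $\alpha=\delta_2$ your accounting at $k=\alpha+1$ loses one unit. If $\delta=\delta_2$, the gain from large in-degrees is exactly $\beta$. Subtracting your bound $\min\{\alpha+1,\beta+1\}$ then gives $-1$ when $\beta\le\alpha$, so you would only obtain $\gamma\le\delta_2(m-\gamma_1)-1$.

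This genuinely fails in the following example: take $n=m=10$, $\Gamma=(9,1)$, $\Delta=(9,3)$, $\Sigma=36$. Then $\alpha=\delta_2=3$, $\beta=1$, $\delta=\delta_2$, and $\gamma=3=\delta_2(m-\gamma_1)$, so \eqref{eq:Dweakeralpha2} holds. At $k=4$ the left side is $30$ and the true right side (for $f\ge1$) is $31-1=30$, but your estimate gives only $31-2=29$.

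The fix is to match the correction to the gain. The correction counts only in-degrees $\ge\alpha+1$ among the first $\alpha+1$ positions. Hence gain minus correction is at least the number of such large entries at positions $\ge\alpha+2$, that is, at least $\max\{\beta-\alpha-1,0\}$. This is exactly what the paper's split of the right-hand side of \eqref{eq:secondvariant} into $\sum_{i\le\alpha+1}\min\{d_i,\alpha\}+\sum_{i\ge\alpha+2}\min\{d_i,\alpha+1\}$ records. The resulting inequality $\delta_2\gamma_1+\gamma\le m\delta_2+\max\{\beta-\alpha-1,0\}$ is precisely \eqref{eq:Dweakeralpha2}; no use of $\Sigma$ is needed. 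You should also state explicitly that the $k=\alpha$ inequality in this case reads $\alpha\gamma_1\le(m-1)\alpha$, which holds because $\gamma_1\le m-1$.
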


\begin{proof}
Due to Theorem \ref{th:DI-LEG} we know that region $\fD(\Sigma; n,\Gamma, m, \Delta; f)$ is fully graphic when the $\lbb(\Sigma; n,\Gamma, m, \Delta; f)$  is graphic. By Theorem \ref{th:berger} this is equivalent to showing the two inequalities
\begin{align}
\alpha \gamma_1 &\le \sum_{i=1}^\alpha \min\{d_i,\alpha-1\} + \sum_{i=\alpha+1}^m \min\{d_i,\alpha\}\; \quad \text{and}\\ \label{eq:secondvariant}
\alpha \gamma_1 + \gamma &\le \sum_{i=1}^{\alpha+1} \min\{d_i,\alpha\} + \sum_{i=\alpha+2}^m \min\{d_i,\alpha+1\}.
\end{align}
If $\alpha > \delta_1$ then both right hand sides are equal to $\Sigma$ hence both inequalities automatically hold so we are done. If $\alpha = \delta_1$ then the second inequality is still automatically true so we only need to prove the first. From now on we assume $\alpha \leq \delta_1$ (and from symmetry also that $\beta \leq \gamma_1$). Note that in this case
\begin{multline}\sum_{i=1}^\alpha \min\{d_i,\alpha-1\} + \sum_{i=\alpha+1}^m \min\{d_i,\alpha\} \geq \sum_1^n \min\{d_i,\alpha\} - \alpha \\ =\beta \cdot \alpha + \min \{\alpha,\delta\} + (m-\beta-1)\min\{\delta_2 ,\alpha\} - \alpha \\\geq  (\beta-1) \cdot \alpha + (m-\beta)\min\{\delta_2,\alpha\} \end{multline} and similarly
\begin{multline} \sum_{i=1}^{\alpha+1} \min\{d_i,\alpha\} + \sum_{i=\alpha+2}^m \min\{d_i,\alpha+1\}\geq \sum_1^n \min\{d_i,\alpha+1\} - \alpha \\ =\beta \cdot (\alpha+1) + \min \{\alpha+1,\delta\} + (m-\beta-1)\min\{\delta_2 ,\alpha+1\} - \alpha-1 \\ \geq  (\beta-1) \cdot (\alpha+1) + (m-\beta)\min\{\delta_2,\alpha+1\} \end{multline}
As $\gamma < \gamma_1$, it suffices to show that
\begin{align}
\alpha \gamma_1 &\le (\beta-1) \cdot \alpha + (m-\beta)\min\{\delta_2,\alpha\}\; \text{ and } \label{eq:Dalap1}\\
(\alpha+1) \gamma_1  &\le (\beta-1) \cdot (\alpha+1) + (m-\beta)\min\{\delta_2,\alpha+1\}. \label{eq:Dalap2}
\end{align}
If $\alpha < \delta_2$ then the right hand sides are $(m-1)\cdot \alpha$ and $(m-1)\cdot (\alpha+1)$ respectively, so both inequalities follow from $\gamma_1 \leq m-1$ and we are done. Hence we may assume that $\alpha \geq \delta_2$. Then \eqref{eq:Dalap1} and \eqref{eq:Dalap2} take the following equivalent form:

\begin{align}
\alpha (\gamma_1-\beta+1) &\le \delta_2 (m-\beta) \label{eq:Dbounded1b}\; \text{ and }\\
(\alpha+1) (\gamma_1-\beta+1)  &\le \delta_2 (m-\beta). \label{eq:Dbounded2b}
\end{align}
Since $\beta \leq \gamma_1$, equation \eqref{eq:Dbounded2b} clearly implies \eqref{eq:Dbounded1b} and thus also graphicality of the $\lbb$, as claimed.

Next, based on our earlier observations, in the case of $\alpha=\delta_1$, already \eqref{eq:Dbounded1b} implies graphicality of the $\lbb$.

Finally, when $\alpha=\delta_2$, the first inequality follows from $m \geq \gamma_1+1$. Instead of \eqref{eq:Dalap2} we go back to \eqref{eq:secondvariant} and note that the right hand side there, for $\alpha=\delta_2$ is at least $m \delta_2 + \max\{\beta-\alpha-1,0\}$ hence \eqref{eq:Dweakeralpha2} is sufficient to imply full graphicality.

The role-reversed cases follow from the roles being symmetric.
\end{proof}

\begin{proof}[Proof of Theorem~\ref{th:ELP}]
According to Lemma~\ref{lem:fdependence} it is enough to consider $|\F|=\min\{m,n\}$ and by Lemma~\ref{lem:canonical-matching} we can assume that the matching $\F = \F_0$ from that lemma.

We proceed by contradiction, similarly to the proof of Theorem~\ref{th:b-fully1}, and again using
the notation of Figure~\ref{fig1}.  As observed in
Remark~\ref{th:alfa}, the required inequalities hold automatically if
$\alpha>\delta_1$ or $\alpha<\delta_2$, and likewise after exchanging
the two classes.  We may therefore assume \eqref{eq:alfa1}.

Suppose the region in \eqref{eq:PD3'} is not fully graphic. Then there is a specific set of values $(m,\Gamma, m,\Delta)$ satisfying $\varphi_3''$ and a $\Sigma$ in the range $\max \{n\gamma_2, m\delta_2\} < \Sigma < \min\{n \gamma_1, m\delta_1\}$ such that $\fD(\Sigma; n, \Gamma, m, \Delta; f)$ is not fully graphic.

Thus, by Lemma~\ref{lem:DILEG}, neither \eqref{eq:Dstrong2} nor \eqref{eq:Dstrong2b} is satisfied, which means that
\begin{align*}
(\alpha+1)(\gamma_1 - \beta+1) &\geq \delta_2(m-\beta) +1 \; \text{ and}\\
(\beta+1)(\delta_1 - \alpha+1)  &\geq  \gamma_2(n-\alpha)+1.
\end{align*}
This can be also written as
\begin{align*}
(\alpha-\delta_2+1)(\gamma_1 - \beta+1) &\geq \delta_2(m-\gamma_1) -\delta_2 +1 \; \text{ and}\\
(\beta-\gamma_2+1)(\delta_1 - \alpha+1)  &\geq  \gamma_2(n-\gamma_1)-\gamma_2+1.
\end{align*}
Using the notation from before: \[a = \alpha-\delta_2, b= \delta_1-\alpha, c= \beta-\gamma_2, d= \gamma_1-\beta,\] we thus have so far that
\begin{align*}
    (a+1)(d+1) &\geq \dD - \delta_2 +1 \; \text{ and}\\
    (c+1)(b+1) &\geq \aA - \gamma_2 +1
\end{align*}
Finally, the inequality defining the region takes the form of
\[ (a+b)(c+d) \leq \aA+\dD - \gamma_2- \delta_2-3\]
This, combined with the previous two, yields
\[(a+b)(c+d)+3 \leq \aA + \dD -\delta_2-\gamma_2 \leq ad + a+d+bc + b+c, \] which simplifies to
\[ (a-1)(c-1)+(b-1)(d-1)+ 1 \leq 0 \]
If $a,b,c,d > 0$, this is clearly impossible, thus the LEG is indeed graphic and so the region is fully graphic as claimed.

When one of these gaps is 0, we need to use the appropriate stronger condition from Lemma~\ref{lem:DILEG}. For example, suppose $b=0$, that is $\alpha=\delta_1$. Then by the lemma \eqref{eq:Dweakeralpha} has to fail, so we get
\[ \alpha(\gamma_1-\beta+1) \geq \delta_2(m-\beta)+1,\] or equivalently
\[ (\alpha-\delta_2)(\gamma_1-\beta+1) \geq \delta_2(m-\gamma_1)-\delta_2 + 1,\] which can again be written as
\[ a (d+1)  \geq \dD - \delta_2 +1.\]
Adding $(c+1)(b+1) \geq \aA - \gamma_2 +1 $ and applying $(a+b)(c+d) \leq \aA +\dD - \gamma_2- \delta_2 -3$ then yields

\[ (a-1)(c-1)+b(d-1) = ac + bd - a- b-c +1 \]

The other cases follow in a similar fashion, though when $a$ or $c$ is 0 one also needs to use the identity
\[ \alpha \gamma_1 + \gamma + (n-\alpha-1)\gamma_2 = \Sigma = \beta \delta_1\ + \delta + (m-\beta-1)\delta_2. \]

\end{proof}

It is also possible to prove results analogous to Theorem~\ref{th:fully5} and ~\ref{cor:verysimple_fg}. However we think these do not give further real insight to fully graphic directed regions, therefore we leave these to the interested readers.

\paragraph{AI disclosure:} ChatGPT 6 was used to refine the exposition, both by generating reports and making minor edits to the manuscript. It was also used to aid specific calculations, where mentioned. All significant ideas and methods were obtained without any AI use.

\bibliographystyle{plain}

\end{document}